 \DeclareMathAlphabet{\mathpzc}{OT1}{pzc}{m}{it}
 \newtheorem{theorem}{Theorem}[section]
 \newtheorem{lemma}[theorem]{Lemma}
 \newtheorem{proposition}[theorem]{Proposition}
 \newtheorem{definition}[theorem]{Definition}
  \theoremstyle{definition}
 \newtheorem{remark}[theorem]{Remark}
\newtheorem*{conventions}{Conventions}
\newtheorem*{theorem*}{Theorem \ref{main}}
\renewenvironment{proof}{\noindent{\it
Proof.}}{\bgroup\hspace{\stretch{1}}$\square$\egroup\medskip\par}
\newcommand{\Rep}{\textrm{Rep}}
\newcommand{\id}{\mathrm{id}}
\newcommand{\End}{\mathrm{End}}
\newcommand{\Hom}{\textrm{Hom}}
\newcommand{\RHom}{\underline{\mathrm{Hom}}}
\newcommand{\front}{P}
\newcommand{\back}{Q}
\newcommand{\A}{\mathsf{A}}
\newcommand{\D}{{\bf \Delta}}
\newcommand{\Hol}{\mathsf{Hol}}
\newcommand{\Flat}{\mathsf{Flat}}
\begin{document}

\def\tetra{\draw(A) -- (B) -- (C) -- (D) -- cycle; \draw[shade](A)--(B)--(D); \draw[shade](B)--(C)--(D);\draw(B)--(D); \draw[dotted](A)--(C);\draw[dotted](A)--(C); \draw(A)--(D);}
\def\tetraA{\draw(A) -- (B) -- (C) -- (D) -- cycle; \draw[shade=gray](B)--(C)--(D);\draw[thin](B)--(D); \draw[dotted](A)--(C);\draw[dotted](A)--(C); \draw[thick](A)--(D);}
\def\tetraB{\draw(A) -- (B) -- (C) -- (D) -- cycle; \shade(A)--(C)--(D);\draw(B)--(D); \draw(A)--(D); \draw[dotted](A)--(C);\draw(C)--(D);}
\def\tetraC{\draw(A) -- (B) -- (C) -- (D) -- cycle; \draw[shade=gray](A)--(B)--(D);\draw(B)--(D); \draw[dotted](A)--(C); \draw[dotted](A)--(C); \draw(A)--(D);\draw[thick](B)--(C);}
\def\tetraD{\draw(A) -- (B) -- (C) -- (D) -- cycle; \draw[shade=gray](A)--(B)--(C);\draw(B)--(D); \draw[dotted](A)--(C);\draw[dotted](A)--(C);\draw(A)--(D);  }

\def\triangle{\draw(A) -- (B) -- (C)--cycle; \draw[shade](A)--(B)--(C);\draw(A)--(C);}
\def\triangleA{\draw(A) -- (B) -- (C)  -- cycle;\draw[ultra thick](A)--(C);\draw[ultra thick](C)--(B);}
 \def\triangleB{\draw(A) -- (B) -- (C)  -- cycle;\draw[ultra thick](A)--(B);}

\def\triangleabd{\draw(a)--(b)--(d) --cycle;}
\def\trianglebde{\draw(b)--(d)--(e) --cycle;}
\def\trianglecbe{\draw(a)--(d)--(e) --cycle;}
\def\trianglecef{\draw(a)--(b)--(c) --cycle;}

\def\triangleabc{\draw(c)--(d)--(f) --cycle;}
\def\triangleacd{\draw(a)--(c)--(d) --cycle;}
\def\triangleabe{\draw(a)--(b)--(e) --cycle;}
\def\triangleade{\draw(a)--(d)--(e) --cycle;}

\def\trianglebce{\draw(b)--(c)--(e) --cycle;}
\def\trianglebcf{\draw(b)--(c)--(f) --cycle;}
\def\trianglebef{\draw(b)--(e)--(f) --cycle;}
\def\trianglecef{\draw(c)--(e)--(f) --cycle;}

\def\trianglebce{\draw(b)--(c)--(e) --cycle;}
\def\trianglebcf{\draw(b)--(c)--(f) --cycle;}
\def\trianglebef{\draw(b)--(e)--(f) --cycle;}
\def\trianglecef{\draw(c)--(e)--(f) --cycle;}

\def\trianglecdf{\draw(c)--(d)--(f) --cycle;}
\def\triangleacf{\draw(a)--(c)--(f) --cycle;}
\def\triangleadf{\draw(a)--(d)--(f) --cycle;}

\def\triangleaef{\draw(a)--(e)--(f) --cycle;}
\def\triangleabf{\draw(a)--(b)--(f) --cycle;}
\def\trianglecef{\draw(c)--(e)--(f) --cycle;}

\def\ssquare{\draw(a)--(c)--(f)--(d)--cycle;}
\def\splitsquare{\draw(a)--(c)--(f)--(d)--cycle; \draw(b)--(e);}

\def\cubefromabove{\draw(a)--(b)--(d)--(c)--cycle; \draw(A)--(B)--(D)--(C)--cycle;\draw(a)--(A);\draw(b)--(B);\draw(c)--(C); \draw(d)--(D);}

\def\completesquare{\draw(a)--(c)--(f)--(d)--cycle; \draw(b)--(e);\draw(b)--(d); \draw(c)--(e);}
\def\dsquare{\draw(a)--(c)--(f)--(d)--cycle;\draw(c)--(d);a}
\def\vsplitsquare{\draw(a)--(b)--(f)--(e)--cycle; \draw(c)--(d);}
\def\line{\draw[thick](A) -- (B);\fill[ultra thick](A);}


\def\bleft{\draw(X) -- (Y) -- (Z)--(W);}
\def\bright{\draw(x) -- (y) -- (z)--(w);}

\def\Triangleabd{\draw(a)--(b)--(d) --cycle;\draw(d)--(f);}
\def\Triangleacd{\draw(a)--(c)--(d) --cycle;\draw(d)--(f);}
\def\Triangleabc{\draw(a)--(b)--(c) --cycle;\draw(c)--(d);\draw(d)--(f);}
\def\Trianglebcd{\draw(b)--(c)--(d) --cycle;\draw(d)--(f);\draw(a)--(b);}

\def\Trianglecdf{\draw(c)--(d)--(f) --cycle;\draw(a)--(c);}
\def\Trianglecde{\draw(c)--(d)--(e) --cycle;\draw(a)--(c);\draw(e)--(f);}
\def\Triangledef{\draw(d)--(e)--(f) --cycle;\draw(a)--(e);}
\def\Trianglecef{\draw(c)--(e)--(f) --cycle;\draw(a)--(c);}

\def\Triangleabe{\draw(a)--(b)--(e) --cycle;\draw(f)--(e);}
\def\Trianglebef{\draw(b)--(e)--(f) --cycle;\draw(b)--(a);}
\def\Triangleaef{\draw(a)--(e)--(f) --cycle;}
\def\Triangleabf{\draw(a)--(b)--(f) --cycle;}

\def\Triangleadf{\draw(a)--(d)--(f) --cycle;}
\def\Trianglebde{\draw(b)--(d)--(e) --cycle;\draw(b)--(e);}
\def\Triangleacf{\draw(a)--(c)--(f) --cycle;}
\def\Trianglebce{\draw(b)--(c)--(e) --cycle;\draw(e)--(f);}

\vspace{15cm}
 \title{Higher holonomies: comparing two constructions}

\author{Camilo Arias Abad\footnote{Max-Planck-Institut f\"ur Mathematik, Vivatsgasse 7, 53111 Bonn, Germany.  Partially supported
by SNF Grant 200020-131813/1 and a Humboldt  Fellowship. E-mail: camiloariasabad@gmail.com.} \hspace{0cm} and
Florian Sch\"atz\footnote{Centre for Quantum Geometry of Moduli Spaces, Ny Munkegade 118,
DK-8000 Aarhus C, Denmark. Supported by the Danish National Research Foundation grant DNRF95 (Centre for Quantum Geometry of Moduli Spaces - QGM). E-mail: florian.schaetz@gmail.com.}}

 \maketitle
 \begin{abstract} 
We compare two different constructions of higher dimensional parallel transport.
On the one hand, there is the two dimensional parallel transport associated to 2-connections on 2-bundles studied by Baez-Schreiber \cite{BaezS}, Faria Martins-Picken \cite{Picken} and Schreiber-Waldorf \cite{SchreiberWaldorf}. On the other hand, there are the higher holonomies associated to flat superconnections as studied by Igusa \cite{I}, Block-Smith \cite{BS} and Arias Abad-Sch\"atz \cite{AS}. 
We first explain how by truncating the latter construction one obtains examples of the former.
Then we prove that the 2-dimensional holonomies provided by the two approaches coincide.
\end{abstract}

\tableofcontents

\section{Introduction}\label{s:intro}

The purpose of this note is to compare two extensions of parallel transport to higher dimensional objects.
On the one hand, there is the parallel transport for flat $2$-connections with values in crossed modules studied by
 Baez-Schreiber \cite{BaezS}, Faria Martins-Picken \cite{Picken} and Schreiber-Waldorf \cite{SchreiberWaldorf}.
The fundamental result in this approach is the construction of $2$-dimensional holonomies. This construction yields a map
\[\mathsf{Hol}:\mathsf{Flat}(M,\mathsf{g})\to \mathrm{Rep}(\pi_{\le 2}(M),\mathsf{G}),\]
that assigns to any flat $2$-connection with values in the differential crossed module $\mathsf{g}$ a representation of the fundamental $2$-groupoid of $M$. Here $\mathsf{G}$ is a Lie crossed module whose infinitesimal counterpart is $\mathsf{g}$.

On the other hand, there is the parallel transport of flat superconnections introduced recently by Igusa \cite{I} and subsequently studied by Block-Smith \cite{BS} and 
Arias Abad-Sch\"atz \cite{AS}. The fundamental result in this direction is that there is a weak equivalence of $dg$-categories between the category $\mathsf{Rep}_\infty(TM)$ of flat superconnections on $M$ and the category of $\infty$-representations of
the $\infty$-groupoid $\mathsf{Rep}_\infty(\pi_{\infty}(M))$ of $M$. In particular, there is an integration $\A_\infty$-functor
\begin{align*}
\int: \mathsf{Rep}_\infty(TM) \to \mathsf{Rep}_\infty(\pi_{\infty}(M)),
\end{align*}
which associates to any flat superconnection on $M$ an $\infty$-representation of the $\infty$-groupoid of $M$. This
integration procedure can be understood as a consequence of Gugenheim's $\mathsf{A}_\infty$-version of de-Rham's theorem \cite{G}.

We consider flat connections with values in a fixed finite dimensional complex $(V,\partial)$,
i.e. $V$ is concentrated in finitely many degrees and each of its homogeneous components is finite dimensional.
The integration functor $\int$ restricts to a functor between the resulting full subcategories of $\mathsf{Rep}_\infty(TM)$ and $\mathsf{Rep}_\infty(\pi_\infty(M))$, denoted
by $\mathsf{Rep}_\infty(TM,V)$ and $\mathsf{Rep}_\infty(\pi_\infty(M),V)$, respectively.
Given a flat superconnection $\alpha$ with values in $(V,\partial)$, the integration functor
associates holonomies to any simplex in $M$. By construction,
the holonomy associated to a path $\gamma$ -- seen as a $1$-simplex -- coincides with ordinary
parallel transport along $\gamma$ and yields an automorphism of $(V,\partial)$.
The holonomy $\mathsf{Hol}(\sigma)$ of an $n$-dimensional simplex $\sigma$ is a linear endomorphism of $V$ of degree $1-n$.
The fact that holonomies are built coherently is formalized by
expressing the commutator $[\partial,\mathsf{Hol}(\sigma)]$ in terms of the holonomies
associated to subsimplices of $\sigma$.

For the purpose of this paper we want to focus on  $1$ and $2$-dimensional holonomies. To this end, one factors out all the information
related to simplices of dimension strictly larger than $2$. More formally, the flat superconnection
$\alpha$ is a sum of components of fixed form-degree and we disregard all components of form-degree
strictly larger than $2$. Similarly, one can truncate the $\infty$-groupoid $\pi_\infty(M)$ of $M$ which leads to the fundamental $2$-groupoid $\pi_{\le 2}(M)$ mentioned above.
Observe that, formally, we also ignore the categorical structure on $\mathsf{Rep}_\infty(TM,V)$ and $\mathsf{Rep}_\infty(\pi_\infty(M),V)$ respectively, since we work on the level of objects only. We indicate this transition with the change of notation
from $\mathsf{Rep}_\infty$ to $\mathrm{Rep}_\infty$.

As mentioned above, the holonomies associated to paths are elements of the automorphism group
of $(V,\partial)$. Hence one expects that the holonomies associated to $2$-simplices should 
belong to the automorphism $2$-group of $(V,\partial)$. One way to make this precise is to consider
the Lie crossed module $\mathsf{GL}(V)$ associated to $V$ and its infinitesimal version $\mathsf{gl}(V)$.
We show that the truncation of a flat superconnection can be seen as a flat $2$-connection
on $M$ with values in $\mathsf{gl}(V)$ and that the $2$-truncation of $\int \alpha \in \Rep_\infty(\pi_\infty(M),V)$
is a $2$-representation of $\pi_{\le 2}(M)$ on $(V,\partial)$.
Hence, we obtain a diagram of the form
$$ 
\xymatrix{
\mathrm{Rep}_\infty(TM,V) \ar[rr]^\int \ar[d]_{\mathsf{T}_{\le 2}} && \mathrm{Rep}_\infty(\pi_\infty(M),V) \ar[d]^{\mathsf{T}_{\le 2}} \\
\mathsf{Flat}(M,\mathsf{gl}(V)) && \mathrm{Rep}(\pi_{\le 2}(M),\mathsf{GL}(V)).
}
$$

This allows us to compare the integration functor $\int$ to the $2$-dimensional holonomies for flat $2$-connections with values in a differential Lie crossed module, which specializes to
$$ \mathsf{Hol}: \Flat(M,\mathsf{gl}(V)) \to \mathrm{Rep}(\pi_{\le 2}(M),\mathsf{GL}(V)).$$

Our main result is that the above two diagrams are compatible, i.e.
the $2$-truncation of $\int \alpha$ actually coincides with the holonomy construction
proposed in \cite{BaezS,Picken,SchreiberWaldorf}. More precisely:

\begin{theorem*}
Let $M$ be a smooth manifold and $(V,\partial)$ a cochain complex of vector spaces of finite type. Then the following diagram commutes:
\[
\xymatrix{
\mathrm{Rep}_\infty(TM,V)\ar[rr]^{\int} \ar[d]_{\mathsf{T_{\le 2}}} && \mathrm{Rep}_\infty(\pi_\infty(M),V)\ar[d]^{\mathsf{T_{\le 2}}}\\
\mathsf{Flat}(M,\mathsf{gl}(V)) \ar[rr]_{\mathsf{Hol}} &&  \mathrm{Rep}(\pi_{\le 2}(M),\mathsf{GL}(V)).}
\]
\end{theorem*}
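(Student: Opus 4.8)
The plan is first to unwind what commutativity of the square means and then to localize the problem. Both composites $\mathsf{Hol}\circ\mathsf{T}_{\le 2}$ and $\mathsf{T}_{\le 2}\circ\int$ send a flat superconnection $\alpha$ to a $2$-representation of $\pi_{\le 2}(M)$ on $(V,\partial)$, that is, to a $2$-functor from the fundamental $2$-groupoid of $M$ to the $2$-group $\mathsf{GL}(V)$. On objects both are the constant assignment $m\mapsto(V,\partial)$, so it is enough to compare the two $2$-functors on $1$-morphisms (homotopy classes of paths) and on $2$-morphisms (homotopy classes of $2$-cells). Moreover, since both composites are already known to be honest $2$-representations by the results recalled in the introduction, no further coherence has to be verified: it suffices to check equality on these generators.

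Write $\alpha=\partial+A+B$ for the truncation, with $A\in\Omega^1(M,\mathrm{End}^0(V))$ and $B\in\Omega^2(M,\mathrm{End}^{-1}(V))$; the equation $D^2=0$ decomposes into $[\partial,A]=0$, $F_A+[\partial,B]=0$ and $d_AB=0$, which is exactly the data of a flat $2$-connection with values in $\mathsf{gl}(V)$. On a path $\gamma$, the construction $\int$ produces the parallel transport of $d+A$ (part of the stated properties of the integration functor), and the Baez--Schreiber / Faria Martins--Picken / Schreiber--Waldorf $2$-holonomy restricts on $1$-morphisms to the ordinary holonomy of the same connection; so the two $2$-functors agree on all $1$-morphisms. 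For the $2$-morphisms I would exploit naturality: $\int$ is built from pullbacks of the forms making up $\alpha$, and $\mathsf{Hol}$ is built from the pulled-back $2$-connection, so both are natural in $M$. Hence, for a smooth $2$-simplex $\sigma\colon\Delta^2\to M$ one may replace $M$ by $\Delta^2$ and $\sigma$ by $\id_{\Delta^2}$, and the claim becomes a single identity in $\mathrm{End}^{-1}(V)$ attached to an arbitrary truncated flat superconnection on $\Delta^2$ (reading $\Delta^2$ as a bigon so as to feed it into the bigon-based definition of $\mathsf{Hol}$).

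It remains to match the two resulting integral expressions. On one side, Gugenheim's $\mathsf{A}_\infty$-de Rham theorem together with Chen's iterated integrals and Dupont's simplicial homotopy express $\int\alpha(\id_{\Delta^2})$ as a sum of iterated integrals over configuration spaces of ordered points of $\Delta^2$ in which exactly one point carries the $2$-form $B$ and all others carry $A$ --- the only way to land in internal degree $-1$. On the other side, pulling $(A,B)$ back along a standard parametrization of $\Delta^2$ by $[0,1]^2$ and transporting horizontally first, $\mathsf{Hol}(A,B)(\id_{\Delta^2})$ is the holonomy of the transgressed $\mathrm{End}^{-1}(V)$-valued connection $1$-form on path space, which is again an iterated integral with a single $B$-insertion conjugated by partial $A$-transports. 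I would identify the two by showing that the partial holonomies $s\mapsto\mathsf{Hol}^{(s)}$, obtained by restricting the surface to homotopy parameter $\le s$, solve one and the same first-order linear ODE driven by the transgressed connection $1$-form, with initial value $\id$; uniqueness of solutions then forces the full holonomies to coincide. Concretely this amounts to differentiating Chen's iterated integral in its upper limit, using the fundamental theorem of calculus for iterated integrals and the flatness relations $[\partial,A]=0$, $F_A+[\partial,B]=0$, $d_AB=0$ to recognize the driving term as the transgression of $B$, plus a check of initial conditions. (Alternatively one can match the two iterated-integral expansions term by term after a change of variables relating Chen's ``simplex of subsimplices'' to the grid limit underlying the surface-ordered product.)

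The main obstacle is this last step: reconciling the two parametrization conventions --- simplices on the $\int$-side versus bigons/squares on the $\mathsf{Hol}$-side --- and keeping track of the combinatorics and signs of the iterated integrals, in particular of how the unique $B$-insertion is conjugated by the surrounding $A$-transport factors on each side and of how the multiplication of $B$-contributions in the surface-ordered product is encoded in the crossed-module group law on $\mathsf{GL}(V)^1$. Once the order-one terms --- equivalently, the two transgressed connection $1$-forms --- have been matched, the higher terms follow formally from the ODE characterization together with the multiplicativity of both holonomies under subdivision of the $2$-cell, which is part of $2$-functoriality.
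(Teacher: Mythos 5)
Your global architecture matches the paper's: reduce to the values on paths and $2$-cells, observe that on paths both composites give the ordinary holonomy of $\alpha^1$ (up to the reversal convention), and then compare two iterated-integral expressions for the $2$-dimensional holonomies. The $1$-morphism part of your argument is correct and is exactly what the paper does. However, the heart of the theorem is the identity between the two $2$-holonomy formulas, and there your proposal stops at a plan: you name the comparison of the simplex-based and square-based iterated integrals as "the main obstacle" and offer two possible strategies (an ODE-uniqueness argument in the homotopy parameter, or a term-by-term change of variables) without carrying either out. This is precisely where all the work lies, so as it stands there is a genuine gap rather than a complete proof.

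Two specific points that your sketch does not engage with, and which any completion must address. First, the two iterated-integral expressions are \emph{not} equal in $\End^{-1}(V)$; they agree only modulo $[\partial,\End^{-2}(V)]$, i.e.\ in the quotient $\mathsf{GL}^{-1}(V)$. Relatedly, your flatness equation "$d_AB=0$" holds only in that quotient; in $\End^{-1}(V)$ the relation is $d\alpha^1+\alpha^1\wedge\alpha^1+[\partial,\alpha^2]=0$, and it is exactly the $[\partial,\alpha^2]$ term that generates the correction terms. The paper handles this by an explicit Stokes-theorem argument over $\Delta_{m+n+1}\times I\times I$, using a homotopy between the two parametrizing maps and showing the discrepancy is a sum of terms of the form $[\partial,W_{k,m,n}]$. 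Your ODE approach would have to be run in the quotient Lie group, and recognizing the $s$-derivative of the simplex-side expression (built from the folding map $\Theta$) as the transgressed driving term modulo $[\partial,\cdot]$-exact terms is essentially equivalent in difficulty to that Stokes computation. Second, the truncation on the representation side assigns to a square $\Gamma$ the element $(\beta^2(\Gamma_b)-\beta^2(\Gamma_a))\beta^1(\Gamma_1^{-1})$, a \emph{difference} of two triangle holonomies; your reduction to a single $2$-simplex $\id_{\Delta^2}$ does not account for the $\Gamma_a$ term. The paper disposes of it by first proving homotopy invariance of the truncated representation (via the cube-triangulation lemma) and then deforming $\Gamma$ to a well-supported $2$-path on which $\beta^2(\Gamma_a)=0$; some such normalization, or an explicit treatment of both triangles, is needed in your argument as well.
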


\begin{conventions}
By `cochain complex' we mean a finite dimensional cochain complex of vector spaces over $\mathbb{R}$,
i.e. the differential increases the degree by $1$, the vector space is supported in finitely many degrees and each of its homogeneous components is finite-dimensional.

We denote the unit interval [0,1] by $I$. Moverover, in the square $I^2$, we denote the horizontal coordinate by $t$
and the vertical coordinate by $s$.

The $n$-simplex is the subset of $\mathbb{R}^n$ given by
$\Delta_n := \{ (t_1,\dots,t_n): 1\ge t_1 \ge t_2 \ge \cdots \ge t_n \ge 0\}$.
Given any map $f: I \times X \to Y$ we obtain maps
$$ f_{(k)}: \Delta_k \times X \to Y^{\times k}, \quad (t_1,\dots,t_n,x) \mapsto (f(t_1,x),\dots,f(t_n,x))$$
for any $k\ge 1$.
\end{conventions}

\subsection*{Acknowledgements}

We would like to thank Jim Stasheff for carefully reading and commenting a previous version of this manuscript.
We are also grateful to Joao Faria Martins for his help with some of the references.

\section{Higher holonomies for superconnections}

Here we briefly review the parallel transport for superconnections \footnote{Although the prefix \textit{super} usually denotes a $\mathbb{Z}/2\mathbb{Z}$ grading, our vector bundles are $\mathbb{Z}$-graded.} introduced by Igusa \cite{I} and studied further by Block-Smith \cite{BS} and 
Arias Abad-Sch\"atz \cite {AS}.
\subsection{Flat superconnections }\label{section:flat_superconnections}

Let $E$ be a $\mathbb{Z}$-graded vector bundle of finite rank over a manifold $M$, i.e. $E = \bigoplus_{k\in \mathbb{Z}} E^k$, where $E^k$ is a finite-rank vector bundle
over $M$  for every $k\in \mathbb{Z}$ and $E^k$ is required to be zero for almost all $k \in \mathbb{Z}$.
The space $\Omega(M,E)=\Gamma(\wedge T^*M \otimes E)$ of differential forms with values in $E$ is a $\mathbb{Z}$-graded module over the algebra $\Omega(M)$.

\begin{definition}
A superconnection on $E$ is a linear operator
\begin{align*}
 D: \Omega(M,E) \to \Omega(M,E),
\end{align*}
of degree one which satisfies the Leibniz rule
\begin{align*}
D(\alpha \wedge \omega) = d\alpha \wedge \omega + (-1)^{|\alpha|} \alpha \wedge D(\omega),
\end{align*}
for all homogeneous $\alpha \in \Omega(M)$ and $\omega \in \Omega(M,E)$.
A superconnection $D$ is flat if $D^2=0$.
\end{definition}

\begin{remark}
The flat superconnections on $M$ can be organized into a $dg$-category.
A morphism of degree $k$ between two flat superconnections $(E,D)$ and $(E',D')$ on $M$  is a degree $k$ morphism of
$\Omega(M)$-modules
\[\phi: \Omega(M,E)\to \Omega(M,E').\]
Notice that we do not require $\phi$ to be a chain map.
The space of morphisms 
\[\underline{\Hom}(E,E')= \oplus_{k\in \mathbb{Z}} \underline{\Hom}^{{k}}(E,E'),\]
is a cochain complex with differential
\[\Delta(\phi):=D' \circ \phi -(-1)^{|\phi|}\phi \circ D. \]
We will denote the resulting $dg$-category by $\mathsf{Rep}_\infty(TM)$.
\end{remark}

\begin{definition}
Let $(V,\partial)$ be a cochain complex and $M$ be a manifold.
A flat superconnection on $M$ with values in $(V, \partial)$ is a flat superconnection
on the trivial graded vector bundle $M\times V$ such that the following diagram commutes:
\[
\xymatrix{
\Omega^k(M, V^p)\ar[rr]^D \ar[rrd]_\partial &  &\bigoplus_{l\geq k} \Omega^l(M, V^{k+p-l+1})\ar[d]^\pi\\
&&\Omega^{k}(M, V^{p+1})
}
\]
\end{definition}

\begin{lemma}
Let $(V,\partial)$ be a complex and $M$ be a manifold.
A superconnection on $M$ with values in $(V, \partial)$ corresponds naturally to
a differential form
\[ \alpha = \alpha^1+ \alpha^2+ \alpha^3+ \dots,\]
where $\alpha^i \in \Omega^i(M, \End^{1-i}(V)) = \Omega^i(M)\otimes \End^{1-i}(V)$.
Moreover $\alpha$ is flat if and only if the equation
 \[[\partial, \alpha^n]+ d \alpha^{n-1}+ \sum_{i+j=n}\alpha^i \circ  \alpha^j=0\]
holds
for each $n\geq 1$. Here, the commutator $[\cdot,\cdot]$ operates only on the endomorphism part of $\alpha^n$, while 
$d$ is the covariant derivative assosiated to the trivial connection on $\End(V)$. The symbol $\circ$
denotes the multiplication on $\Omega(M)\otimes \End(V)$ given by the wedge-product of forms and composition of endomorphisms, respectively.
\end{lemma}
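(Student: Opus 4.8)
The plan is to unravel the definition of a superconnection with values in $(V,\partial)$ into its form-degree components and then match the flatness condition $D^2 = 0$ with the stated family of equations. The argument is essentially a bookkeeping computation, but it requires care with signs and degrees.

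\textbf{Step 1: Decompose $D$ into form-degree components.}
Since $E = M \times V$ is a trivial graded bundle, $\Omega(M,E) = \Omega(M) \otimes V$ as a graded $\Omega(M)$-module. Any degree-one operator $D$ satisfying the Leibniz rule with respect to $d$ is determined by its restriction to $1 \otimes V \cong V$, because the Leibniz rule forces $D(\alpha \otimes v) = d\alpha \otimes v + (-1)^{|\alpha|}\alpha \wedge D(v)$. So I would write $D|_V = \sum_{i \ge 0} \alpha^i$, where $\alpha^i \colon V \to \Omega^i(M) \otimes V$ raises total degree by one, hence $\alpha^i \in \Omega^i(M) \otimes \End^{1-i}(V)$. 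The commuting-square condition in the definition says precisely that the form-degree-zero part $\alpha^0$ equals $\partial$ (i.e.\ $D$ restricted to $\Omega^k(M,V^p)$ and then projected to $\Omega^k(M,V^{p+1})$ is $\partial$), so $\alpha^0 = \partial$ and the genuinely new data is $\alpha = \alpha^1 + \alpha^2 + \cdots$. This establishes the claimed natural correspondence; naturality is clear since the decomposition is canonical once the trivialization is fixed.

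\textbf{Step 2: Compute $D^2$ and collect by form-degree.}
Using the Leibniz rule once more, $D^2$ is again $\Omega(M)$-linear (the $d^2 = 0$ and the sign in the Leibniz rule conspire so that $D^2(\alpha \wedge \omega) = (-1)^{2|\alpha|}\alpha \wedge D^2(\omega) = \alpha \wedge D^2(\omega)$), so $D^2 = 0$ if and only if $D^2|_V = 0$. Now $D^2|_V = D \circ (\sum_i \alpha^i)$, and applying $D$ to an element of $\Omega^i(M) \otimes V$ via the Leibniz rule produces a term with $d$ applied to the form part plus a term with $\alpha^j$ applied to the $V$ part (with a sign $(-1)^i$ from the Leibniz rule acting on a degree-$i$ form). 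Collecting all contributions landing in $\Omega^n(M) \otimes V$ gives, for each $n \ge 1$, a sum of: the term $\partial \circ \alpha^n + \alpha^n \circ \partial = [\partial, \alpha^n]$ coming from $i=0$ and $j=0$ pieces, the term $d\alpha^{n-1}$ coming from the Leibniz derivative hitting $\alpha^{n-1}$, and the terms $\sum_{i+j=n, \, i,j \ge 1} \alpha^i \circ \alpha^j$ coming from composing the positive-degree pieces. Setting each such form-degree-$n$ component to zero yields exactly the stated equations.

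\textbf{Main obstacle and remaining care.}
The only real subtlety is sign-tracking: one must verify that the sign $(-1)^{|\alpha|}$ in the Leibniz rule, the grading on $\End(V)$ (where $\alpha^i$ has degree $1-i$), and the Koszul sign rule for the product $\circ$ on $\Omega(M) \otimes \End(V)$ all combine so that the cross-terms assemble into the graded commutator $[\partial, \alpha^n]$ and the plain sum $\sum_{i+j=n} \alpha^i \circ \alpha^j$ with no stray signs --- in particular that $\partial$ contributes symmetrically (as a commutator) while the positive pieces contribute additively because the product $\circ$ already absorbs the relevant signs into its definition. I would do this carefully on homogeneous elements $\omega_i \otimes v \in \Omega^i(M) \otimes V$, checking the cases $n=1$ ($[\partial,\alpha^1] = 0$, i.e.\ $\alpha^1$ is a chain map) and $n=2$ ($[\partial,\alpha^2] + d\alpha^1 + \alpha^1 \circ \alpha^1 = 0$) explicitly, since these are the ones that matter for the truncation to $2$-connections later, and then observe that the general pattern is identical. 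Everything else is a routine expansion.
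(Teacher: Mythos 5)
Your argument is correct, and in fact the paper states this lemma without any proof, treating it as the standard unraveling of the definition; your Step 1 (restricting $D$ to constant sections, reading off $\alpha^0=\partial$ from the commuting diagram, and recovering $D=d+\partial+\sum_{i\ge 1}\alpha^i$ by the Leibniz rule) and Step 2 ($\Omega(M)$-linearity of $D^2$ plus collection by form degree) are exactly the intended reasoning. The one point you rightly flag --- that the Koszul signs in the Leibniz rule must be absorbed into the conventions for $[\partial,\cdot]$ and $\circ$ on $\Omega(M)\otimes\End(V)$ --- is genuine but only affects an overall sign in each form-degree-$n$ component of $D^2$, so it cannot disturb the equivalence with the vanishing of the stated expressions.
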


\subsection{Representation up to homotopy of simplicial sets}

Let $X_{\bullet}$ be a simplicial set with face and degeneracy maps denoted by
\[d_i: X_k\to X_{k-1} \quad \text{and} \quad s_i: X_k \to X_{k+1},\] 
respectively.  We will use the notation
\begin{eqnarray*}
\front_i&:=&(d_0)^{k-i}:X_k \to X_i,\\
\back_i&:=&d_{i+1} \circ \dots \circ d_{k} :X_k \to X_i,
\end{eqnarray*}
for the maps that send a simplex to its $i$-th back and front face.
The $i$-th vertex of a simplex $\sigma \in X_k$ 
will be denoted $v_i(\sigma)$, or simply $v_i$, when no confusion can arise.
In terms of the above operations, one can write
\[v_i=(\front_0\circ \back_{i})(\sigma). \]

Suppose that $E$ is a $\mathbb{Z}$-graded vector bundle over $X_0$, i.e. there is a $\mathbb{Z}$-graded vector space
$E_x$ associated to each vertex $x \in X_0$.
A cochain $F$ of degree $k$ on $X_{\bullet}$ with values in $E$ is a map
\begin{align*}
F: X_k \to E,
\end{align*}
such that $F_k(\sigma)\in E_{v_0(\sigma)}$. We denote by $C^k(X, E)$ the vector space of normalized cochains, i.e. those cochains which vanish on the images of $s_i$. The space of $E$-valued cochains is naturally a $\mathbb{Z}$-graded vector space.
In case the vector bundle is the trivial line bundle $\mathbb{R}$ we will write $C(X)$ instead of $C(X,\mathbb{R})$.
The space $C(X)$ is naturally a $\mathbb{Z}$-graded $dg$-algebra with the usual cup product and the simplicial differential defined by
\[\delta(\eta)(\sigma):=\sum_{i=0}^k(-1)^id^*_i(\eta)(\sigma),\]
for $\eta \in C^{k-1}(X)$. Given any $\mathbb{Z}$-graded vector bundle $E$ over $X_0$, the cup product gives the space $C(X,E)$ the structure of a right graded module over the algebra $C(X)$.

\begin{definition}
A representation up to homotopy of $X_{\bullet}$ consists of the following data:
\begin{enumerate}
\item A finite rank $\mathbb{Z}$-graded vector bundle $E$ over  $X_0$.
\item A linear map $D: C(X,E)\to C(X,E)$ of degree $1$ which is a derivation with respect to the $C(X)$-module structure and squares to zero.
\end{enumerate}
The cohomology of $X_\bullet$ with values in $E$, denoted $H(X,E)$, is the cohomology of the complex $(C(X,E),D)$.
\end{definition}

\begin{remark}
The representations up to homotopy of $X_{\bullet}$ form a  $dg$-category. 
Let $(E,D), (E',D')$ be two representations up to homotopy of $X_{\bullet}$.
A degree $k$ morphism $\phi \in \RHom^{k}(E,E')$ is a degree $k$ map of $C(X)$-modules
$\phi: C(X,E)\to C(X,E').$
The space of morphisms is naturally a $\mathbb{Z}$-graded vector space
\[\RHom(E,E')=\bigoplus_k \RHom^{k}(E,E')\]
with differential
\begin{eqnarray*}
\D:\RHom(E,E')&\to& \RHom(E,E')\\
\phi &\mapsto & D' \circ \phi - (-1)^{|\phi|} \phi \circ D.
\end{eqnarray*}
 
We denote the resulting $dg$-category by $\mathsf{Rep}_{\infty}(X_\bullet )$.
\end{remark}

\begin{definition}
Let $(V,\partial)$ be a cochain complex and $X_\bullet$ be a simplicial set.
A representation up to homotopy of $X_\bullet$ on $(V, \partial)$ is a representation up to homotopy of $X_\bullet$
on the trivial graded vector bundle with constant fibre $V$ such that the following diagram commutes:
\[
\xymatrix{
C^k(X, V^p)\ar[rr]^D \ar[rrd]_\partial &  &\bigoplus_{l\geq k} C^l(X, V^{k+p-l+1})\ar[d]^\pi\\
&&C^{k}(X, V^{p+1})
}
\]
\end{definition}

\begin{remark}\label{remstruc}
In \cite{AS}, an equivalent description of representations up to homotopy in terms of endomorphism-valued cochains
is given. As a special case, we obtain the following description of a representation up to homotopy of $X_\bullet$
on $(V,\partial)$.
Let $(\beta^n)_{n \geq 1}$ be a family of cochains of $X_\bullet$ with values in the graded algebra $\End(V)$, where $\beta^n$ is a $n$-cochain that assigns to $\sigma \in X_n$
a linear map
\[\beta^n(\sigma): V\to V\]
of degree $1-n$.
These operators are required to satisfy the following  equations for $n\geq 1$:
\begin{equation}\label{structure equations}
[\partial,\beta^n(\sigma)] = \sum_{j=1}^{n-1}(-1)^j\beta^{n-1}(d_j \sigma)  +
\sum_{j=1}^{n-1} (-1)^{j+1} (\beta^{j}\cup \beta^{k-j})(\sigma). 
\end{equation}
Here,
$\cup$ is the multiplication on $C(X)\otimes \End(V)$ given by the cup product of cochains and the composition of endomorphisms, respectively.
Moreover, the fact that we work with normalized cochains translates into the conditions
\[ \beta^1(s_0(\sigma))=\id  \quad \text{for} \quad \sigma \in X_0 \qquad \quad \textrm{and} \qquad \quad \beta^n(s_i(\sigma))=0 \quad \text{for} \quad \sigma \in X_{n-1} \textrm{ if } n>1. \]
These conditions in turn imply that the $1$-dimensional holonomies are compatible with time reversal:
\[ \beta^1(\sigma^{-1})= \beta^1(\sigma)^{-1}\text{ for any $1$-simplex $\sigma$}.\]
\end{remark}


\subsection{Holonomies for superconnections}\label{subsection: hol for superconnections}

We discuss the notion of parallel transport for flat superconnections.
It generalizes the fact that a flat connection
on a vector bundle corresponds to a representation of the fundamental groupoid $\pi_1(M)$ of $M$.
We denote the simplicial set of smooth simplices in $M$ by $ \pi_{\infty}(M)$, that is
\[\pi_{\infty}(M)_k:=\mathcal{C}^{\infty}(\Delta_k,M).\]

\begin{remark}
We use the following definition of $\Delta_\bullet$ as a cosimplicial space:
The objects are $\Delta_k:=\{(t_1,\cdots,t_k)\in \mathbb{R}^k: 1 \ge t_1 \ge \cdots \ge t_k \ge 0\}$,
and the structure maps are
\begin{eqnarray*}
\partial_i: \Delta_k \rightarrow \Delta_{k+1}, \quad (t_1,\dots, t_k) \mapsto
\begin{cases}
(1,t_1,\dots, t_k) & \textrm{for } i=0,\\
(t_1,\dots, t_{i-1}, t_i, t_i, t_{i+1}, \dots ,t_k) & \textrm{for } 0<i<k+1,\\
(t_1,\dots, t_k, 0) & \textrm{for } i=k+1.
\end{cases}
\quad \textrm{ and}\\
\epsilon_i: \Delta_k \rightarrow \Delta_{k-1}, \quad (t_1,\dots, t_k) \mapsto (t_1,\dots ,t_{i-1}, \widehat{t_i}, t_{i+1}, \dots, t_k),
\quad \textrm{respectively.}
\end{eqnarray*}
Correspondingly, the face and degeneracy maps on $\pi_{\infty}(M)$ are given by
$d_i:=\partial_i^*$ and $s_i:=\epsilon_i^*$.
\end{remark}

One of the central results of \cite{AS,BS,I}  is:

\begin{theorem}\label{theorem:integration_everything}
There is an $\mathsf{A}_{\infty}$-functor 
\begin{align*}
\int: \mathsf{Rep}_\infty(TM) \to \mathsf{Rep}_\infty(\pi_{\infty}(M))
\end{align*}
from the $dg$-category of flat superconnections on $M$ and the $dg$-category of representations up to homotopy of $\pi_{\infty}(M)$.
\end{theorem}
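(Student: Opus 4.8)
The plan is to obtain $\int$ by transporting Maurer--Cartan data along Gugenheim's $A_\infty$-refinement of the de Rham map, and I would begin by reinterpreting both sides as categories of Maurer--Cartan elements. By the Lemma, a flat superconnection on the trivial $V$-bundle is exactly a Maurer--Cartan element $\alpha = \alpha^1 + \alpha^2 + \cdots \in \Omega(M)\otimes \End(V)$ for the differential $[\partial,-]+d$ and the product $\circ$; more generally an object of $\mathsf{Rep}_\infty(TM)$ is a graded bundle $E$ equipped with such a Maurer--Cartan element of $\Omega(M,\End E)$, and $\RHom$ is the associated twisted $\Hom$-complex $\Omega(M,\Hom(E,E'))$. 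Dually, by Remark \ref{remstruc} an object of $\mathsf{Rep}_\infty(\pi_\infty(M))$ is a graded bundle over $X_0$ together with structure cochains $(\beta^n)_{n\geq 1}$ in $C(\pi_\infty(M))\otimes \End(V)$ satisfying \eqref{structure equations}, i.e.\ a Maurer--Cartan element for the dg-algebra $C(\pi_\infty(M))$ of normalized cochains. Thus it suffices to produce an $A_\infty$-morphism of dg-algebras $\Omega(M)\to C(\pi_\infty(M))$ and to invoke the general principle that such a morphism induces an $A_\infty$-functor between the corresponding Maurer--Cartan categories.

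The required morphism is supplied by Gugenheim's theorem \cite{G}, which I take as external input: Chen's iterated integrals assemble into an $A_\infty$-morphism $\Psi=(\Psi_k)_{k\geq 1}$, with $\Psi_k\colon \Omega(M)^{\otimes k}\to C(\pi_\infty(M))$ of degree $1-k$, whose first component $\Psi_1$ is integration of forms over smooth simplices and whose higher components measure the failure of $\Psi_1$ to respect the wedge and cup products. Combining $\Psi_k$ with iterated composition $\mu_k$ in $\End(V)$ --- applying $\Psi_k$ to the form components and composing the endomorphism components, with the Koszul signs --- I would set the value of $\int$ on an object to be
\[
\beta := \sum_{k\geq 1}(\Psi_k\otimes \mu_k)(\alpha^{\otimes k}),
\]
and define the structure maps $\int_n(\phi_1,\dots,\phi_n)$ on a string of composable morphisms by summing $(\Psi_k\otimes \mu_k)$ over all order-preserving insertions of the $\phi_i$ together with arbitrarily many copies of the intermediate superconnections. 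A degree count shows that the $n$-cochain part of $\beta$ is the sum of $\Psi_k(\alpha^{p_1}\otimes\cdots\otimes \alpha^{p_k})$ over $p_1+\cdots+p_k = n+k-1$, which lands in $\End^{1-n}(V)$, matching the type of $\beta^n$ in Remark \ref{remstruc}; in particular the $n=1$ diagonal assembles, together with the unit cochain, into the path-ordered exponential $\mathcal P\!\exp\int_\gamma \alpha^1$, i.e.\ ordinary parallel transport.

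It then remains to verify that this assignment is a well-defined $A_\infty$-functor. The two algebraic checks --- that $\beta$ satisfies \eqref{structure equations} and that the $\int_n$ satisfy the $A_\infty$-functor relations --- are formal consequences of flatness $D^2=0$ and of the $A_\infty$-morphism relations for $\Psi$: evaluating the defining relations of $\Psi$ on the repeated argument $\alpha$ converts the quadratic Maurer--Cartan expression on the $\Omega(M)$ side into the cup-product expression on the $C(\pi_\infty(M))$ side, while the normalization conditions of Remark \ref{remstruc} reflect the vanishing of Chen's integrals on degenerate simplices together with the unitality of parallel transport. For a general (non-trivial) graded bundle $E$ one runs the same construction in a local frame and patches using the naturality of Chen's integrals, the bundle underlying $\int(E,D)$ being $E$ regarded over $X_0=\pi_\infty(M)_0=M$.

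The main obstacle is analytic rather than algebraic. The sum defining $\beta$ is genuinely infinite, since arbitrarily many factors of the form-degree-one part $\alpha^1$ may occur, so one must establish convergence. This is where I expect the real work to lie: after pulling back along a fixed smooth simplex $\sigma$, the iterated integrals reorganize into the Dyson--Picard series of a linear ODE on the compact domain $\Delta_n$, which converges and depends smoothly on $\sigma$; making this uniform --- together with Gugenheim's construction of $\Psi$ itself, whose verification of the $A_\infty$-relations is the technical heart of \cite{G} --- constitutes the substantive content, the transport of the $A_\infty$-structures being otherwise formal.
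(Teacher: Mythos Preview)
The paper does not prove this theorem; it is quoted as ``one of the central results of \cite{AS,BS,I}'' and the subsequent remark explicitly defers all details of the construction to those references. So there is no proof in the paper to compare against.

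That said, your sketch is precisely the strategy carried out in \cite{AS}, and the paper itself signals this in the introduction (``This integration procedure can be understood as a consequence of Gugenheim's $\mathsf{A}_\infty$-version of de-Rham's theorem \cite{G}''). Recasting both sides as Maurer--Cartan elements, invoking Gugenheim's $A_\infty$-morphism $\Psi$ built from Chen's iterated integrals, and pushing forward $\alpha$ by $\beta = \sum_k (\Psi_k\otimes \mu_k)(\alpha^{\otimes k})$ is exactly the mechanism used there; your identification of $\beta^1$ with path-ordered parallel transport and of convergence of the iterated-integral series as the analytic crux is also accurate and is reflected in the explicit formula for $\beta^2$ quoted in the paper's Remark following the theorem. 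The one place where your outline is thinner than the cited proof is the passage from trivial to nontrivial graded bundles: ``patch using the naturality of Chen's integrals'' hides real work, and in \cite{BS,AS} this is handled not by local trivializations but by working directly with the $\Omega(M)$-module $\Omega(M,E)$ and extending Gugenheim's map to coefficients. For the purposes of the present paper, which only ever uses the restriction to trivial bundles $M\times V$, your sketch already covers what is needed.
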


\begin{definition}
Given a complex of vector spaces $(V,\partial)$, we will denote by $\mathsf{Rep}_\infty(TM,V)$ the full subcategory of flat superconnections on $M$ with values in $(V,\partial)$ and by $\mathsf{Rep}_{\infty}(\pi_\infty(M),V)$ the full subcategory of 
representations up to homotopy of $\pi_\infty(M)$ on $(V,\partial)$.
The integration functor $\int$ restricts to an $A_\infty$-functor
$$\int: \mathsf{Rep}_\infty(TM,V) \to \mathsf{Rep}_\infty(\pi_\infty(M),V).$$

We denote the underlying map between the sets of objects by
$$ \int: \mathrm{Rep}_\infty(TM,V) \to \mathrm{Rep}_\infty(\pi_\infty(M),V).$$
\end{definition}

\begin{remark}
We will not describe the details of the construction of the $\mathsf{A}_\infty$-functor $\int$ here, the interested reader can find them in
\cite{I,BS,AS}. However, we briefly discuss those parts of the construction that will be needed later on.
Let $\alpha = \alpha^1 + \alpha^2 + \cdots$ be a flat superconnection on $M$
with values in $(V,\partial)$.
The integration functor turns $\alpha$ into a $\infty$-representation
of $\pi_\infty(M)$, which we denote by $\beta = \beta^1 + \beta^2 + \cdots$.

We are interested in $\beta^1$ and $\beta^2$.
It turns out that $\beta^1(\gamma) : V\to V$ is just the parallel transport with respect to $\alpha^1$ along
the path $t\mapsto \gamma(1-t)$, i.e. the invere of the usual holonomy along $\gamma$.
The interested reader can consult Remark 4.13 of \cite{AS}, for instance.

Concerning $\beta^2$, we first observe that the structure equations reduce to
$$ [\partial, \beta^2(\sigma) ] = -\beta^1(d_1\sigma) + (\beta^1\cup \beta^1)(\sigma),$$
which tells us that $\beta^2(\sigma)$ is a homotopy between the chain maps
$\beta^1(t\mapsto \sigma(t,t))$ and $\beta^1(t\mapsto \sigma(t,0)) \beta^1(t\mapsto \sigma(1,t))$.
To construct $\beta^2(\sigma)$, we fix a way to fold the square $I^2$ onto the $2$-simplex
$\Delta_2=\{1\ge t \ge s \ge 1\}$ along the principal diagonal. 
Let $q: I^2 \to \Delta_2$ be the map $q(t,s) = (\max\{t,s\},s)$ and $\lambda: I^2 \to I^2$ be the map determined
by the property that $\lambda(-,s)$ is the piecewise linear path which runs
through the points $(s,1) \to (s,0) \to (0,0)$ and arrives at $(s,0)$ at time $t=1/2$.
We denote the composition $q\circ \lambda$ by $\Theta$ and set:
\begin{itemize}
 \item $\Theta_{(k)}$ is the map $\Delta_k\times I \to (\Delta_2)^{\times k}, (t_1,\cdots,t_k,s)\mapsto (\Theta(t_1,s),\cdots, \Theta(t_k,s))$,
 \item $p_i: (\Delta_2)^{\times k} \to \Delta_2$ is the projection on the $i$-th factor.
\end{itemize}

With this notation the operator $\beta^2(\sigma)$ is defined by the infinite sum
$$ 
\sum_{m,n\ge 0} (-1)^{m+n+1} \int_{\Delta_{m+n+1}\times I} \Theta_{(m+n+1)}^*(p_1^*\sigma^*\alpha^1 \wedge \cdots
\wedge p_m^*\sigma^*\alpha^1 \wedge p_{m+1}^*\sigma^*\alpha^2 \wedge p_{m+2}^*\sigma^*\alpha^1\wedge
\cdots \wedge p_{m+n+1}^*\sigma^*\alpha^1),$$
 which can be checked to be absolutely convergent.

\end{remark}

\begin{definition}
Let $M$ be a smooth manifold. A $n$-simplex $\sigma$ in $(\pi_\infty(M))_n$ is thin if the map $\sigma: \Delta_n \rightarrow M$ has rank $<n$. A representation up to homotopy $\beta$ of $\pi_\infty(M)$ is strongly unital at level $n$ if \[ \beta^n(\sigma)=0,\] for each thin $n$-simplex $\sigma$ in $(\pi_\infty(M))_n$.
We will say that $\beta$ is well behaved if it is strongly unital at level $2$.
\end{definition}

\begin{lemma}
Let $M$ be a smooth manifold. A well behaved representation up to homotopy $\beta$ of $\pi_\infty(M)$ is compatible with concatenation, i.e.
\[ \beta^{1}(\gamma \ast \sigma)= \beta^{1}(\gamma) \beta^{1}(\sigma)\quad \text{for any two composable $1$-simplices $\sigma$ and $\gamma$}.\]
Moreover, if $\alpha$ is a flat superconnection on $M$ then the corresponding representation up to homotopy $\int (\alpha)$ is well behaved.
\end{lemma}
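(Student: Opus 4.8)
The plan is to prove the two assertions separately, both relying on the explicit integral formula for $\beta^1$ and $\beta^2$ recalled in the previous remark together with the thinness/strong unitality hypothesis.

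\emph{Compatibility with concatenation.} First I would recall that $\beta^1(\gamma)$ is the inverse of the usual parallel transport along $\gamma$ with respect to the ordinary connection $\alpha^1$, so the multiplicativity $\beta^1(\gamma \ast \sigma) = \beta^1(\gamma)\beta^1(\sigma)$ for composable $1$-simplices is a direct consequence of the classical composition law for holonomy of a connection. But the point of the lemma is to see that this already follows abstractly from well-behavedness, so instead I would argue via the structure equation for $\beta^2$: given composable $1$-simplices $\sigma, \gamma$ (meaning $\sigma(1) = \gamma(0)$ in the appropriate convention), build the $2$-simplex $\tau \in \pi_\infty(M)_2$ obtained by precomposing the concatenation with the standard collapse $\Delta_2 \to \Delta_1$ that folds the triangle onto the edge; concretely $\tau(t,s)$ depends only on one of the barycentric coordinates, so $\tau$ has rank $\le 1 < 2$ and is thin. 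Strong unitality at level $2$ then gives $\beta^2(\tau) = 0$, hence $[\partial,\beta^2(\tau)]=0$, and the reduced structure equation $[\partial,\beta^2(\tau)] = -\beta^1(d_1\tau) + (\beta^1 \cup \beta^1)(\tau)$ forces $\beta^1(d_1 \tau) = \beta^1(P_1 \tau)\,\beta^1(Q_1 \tau)$. Choosing $\tau$ so that $d_1\tau = \gamma \ast \sigma$ while its front and back $1$-faces are (reparametrizations of) $\sigma$ and $\gamma$, and using the reparametrization invariance of $\beta^1$ (which holds since $\beta^1$ is ordinary holonomy), yields the claim.

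\emph{Well-behavedness of $\int(\alpha)$.} Here I need to show $\beta^2(\sigma) = 0$ whenever $\sigma: \Delta_2 \to M$ has rank $< 2$. The idea is to inspect the integral formula
\[
\beta^2(\sigma) = \sum_{m,n\ge 0}(-1)^{m+n+1}\int_{\Delta_{m+n+1}\times I}\Theta_{(m+n+1)}^*\big(p_1^*\sigma^*\alpha^1 \wedge \cdots \wedge p_{m+1}^*\sigma^*\alpha^2 \wedge \cdots \wedge p_{m+n+1}^*\sigma^*\alpha^1\big)
\]
and observe that every integrand is a wedge of pullbacks of forms of the type $\sigma^*\alpha^i$ with $i \in \{1,2\}$, the total form-degree being $m+n+1$ in the $\Delta_2$-directions plus the accounting of the $\alpha^2$ factor, matched against the $(m+n+1)$-dimensional domain $\Delta_{m+n+1}\times I$ after the one $I$-direction is used. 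Since $\sigma$ has rank $< 2$, at every point of $\Delta_2$ the pullback $\sigma^*$ of any $2$-form on $M$ vanishes and $\sigma^*$ of any $1$-form has rank $\le 1$; more precisely $\sigma^*\Omega^{\ge 1}(M)$ is, pointwise, contained in a $1$-dimensional subspace of $T^*\Delta_2$. Pulling back further along $\Theta_{(m+n+1)}$ and the projections $p_i$, all the factors $p_i^*\sigma^*\alpha^\bullet$ land (pointwise) in the pullback of that same rank-$1$ bundle via the map $\Delta_{m+n+1}\times I \to \Delta_2$, $x \mapsto \sigma(\Theta(t_i,s))$ for each $i$; but the composite map $(t_1,\dots,t_{m+n+1},s)\mapsto \sigma(\Theta(t_i,s))$ factors through a map whose differential has rank $\le 1$ as well, so the wedge of $\ge 2$ such one-forms vanishes identically. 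The term with the single $\alpha^2$ factor vanishes outright because $\sigma^*\alpha^2 = 0$ (a $2$-form pulled back by a rank-$< 2$ map). Hence every integrand is zero and $\beta^2(\sigma)=0$.

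The main obstacle I anticipate is bookkeeping in the second part: one must be careful that the relevant ``direction'' being killed by the rank condition is genuinely the $\Delta_2$-direction and not absorbed into the $I$-parameter of the folding homotopy $\Theta = q\circ\lambda$. The clean way around this is to note that $\Theta_{(k)}$ has the form $(t_1,\dots,t_k,s)\mapsto(\Theta(t_1,s),\dots,\Theta(t_k,s))$ with all components valued in $\Delta_2$, so $p_i^*\sigma^*\alpha^\bullet = (\sigma\circ\Theta\circ(\pi_i,\pi_s))^*\alpha^\bullet$ where $\pi_i,\pi_s$ are coordinate projections; thus each such form is the pullback of $\sigma^*\alpha^\bullet$ along a map into $\Delta_2$, and a wedge of two or more forms each factoring (pointwise in the image) through the rank-$1$ sheaf $\sigma^*\Omega^1(M)$ must vanish once we also use that these maps share the $\Delta_2$ target and $\sigma$ itself has rank $<2$. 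Spelling this pointwise linear-algebra argument carefully — rather than invoking a global rank statement — is the step that needs the most care, but it is elementary.
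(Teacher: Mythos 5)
Your proof is correct and follows essentially the same route as the paper: the first part is exactly the paper's argument (collapse $\Delta_2$ onto $\Delta_1$ via an affine map, observe that the resulting $2$-simplex is thin, and read off multiplicativity from the $n=2$ structure equation), and the second part rests on the same observation that $\sigma^*\alpha^2=0$ when $\sigma$ has rank $<2$. One caveat on the second part: every summand in the formula for $\beta^2(\sigma)$ contains exactly one factor $p_{m+1}^*\sigma^*\alpha^2$, so the vanishing of $\sigma^*\alpha^2$ already kills every term; your additional claim that a wedge of two or more of the one-forms $\Theta_{(k)}^*p_i^*\sigma^*\alpha^1$ vanishes is therefore unnecessary, and it is also not valid as stated, since these forms are pulled back along the different maps $(t_1,\dots,t_k,s)\mapsto\Theta(t_i,s)$ and hence lie, at a given point of $\Delta_k\times I$, in different one-dimensional subspaces of the cotangent space, so their wedge need not vanish. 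Because that sub-argument is only applied to terms that do not actually occur in the sum, the proof as a whole is unaffected.
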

\begin{proof}
For the first statement we consider the affine map $\pi: \Delta_2 \rightarrow \Delta_1=[0,1]$ that sends
$v_0$ to $0$, $v_1$ to $\frac{1}{2}$ and $v_2$ to $1$. Let $f:[0,1]\rightarrow M$ be the path $ \gamma \ast \sigma$ and consider the $2$-simplex $f \circ \pi$. Since $\beta$ is well behaved 
$\beta^2(f \circ \pi)=0$, and therefore:
\[  \beta^{1}(\gamma \ast \sigma)= \beta^{1}(\gamma) \beta^{1}(\sigma).\]
It remains to prove that any representation up to homotopy of the form $\int (\alpha)$ is well behaved.
That is, we need to prove that $\int(\alpha)^2(\sigma)=0$ for any thin $2$-simplex $\sigma$.
This follows from the explicit formula given in Remark 2.12: since $\sigma$ has rank one, the pull back of the two-form
$\alpha^2$ gives zero, hence all the terms vanish.
\end{proof}

\section{Surface holonomy}
\subsection{Lie Crossed modules}

We will briefly review the definitions and basic facts regarding Lie crossed modules, following closely Faria Martins-Picken \cite{FariaPicken2,FariaPicken,Picken}.

\begin{definition}
A Lie crossed module is a Lie group homomorphism $\delta: E \to G$ together with a left action $\rhd$ of $G$ on $E$ by Lie group automorphisms such that:
\begin{enumerate}
\item  For any $g\in G$ and $e \in E$:
\[\delta (g \rhd e)=g (\delta e) g^{-1}.\]
\item For any $e,f \in E$:
\[ (\delta e) \rhd f=e f e^{-1}.\] 
\end{enumerate} 
\end{definition}

The definition of a Lie crossed module might appear unmotivated. There is an alternative way to think about cross modules
that may be more enlightening: a Lie crossed module is the same as a strict Lie $2$-group.

\begin{definition}
A strict 2-group is a strict $2$-category with one object, in which all $1$-morphisms and $2$-morphisms are invertible.
\end{definition}

Given a Lie crossed module $\delta: E \to G$, one can define a strict two group $\mathcal{G}$ as follows. The $1$-morphisms of 
$\mathcal{G}$ are the elements of $G$. If $g$ and $h$ are $1$-morphisms, then a $2$-morphismfrom $h$ to $g$ is a diagram

\begin{equation}
 \xymatrix{ \ast& e&\ar@/^1pc/[ll]^{h}\ar@/_1pc/[ll]_{g}   \ast}
\end{equation}
where $e \in E$ is such that $\delta e = h^{-1}g$. There are horizontal and vertical composition operations for $2$-morphisms. 
The horizontal composition in $\mathcal{G}$ is always defined and is given by
$$  
\xymatrix{  \ast & e&\ar@/^1pc/[ll]^{h}\ar@/_1pc/[ll]_{g}    
\ast&e'& \ar@/^1pc/[ll]^{h'}\ar@/_1pc/[ll]_{g'} \ast &:=& \ast&  (h'^{-1}\rhd e)e' &\ar@/^1pc/[ll]^{hh'}\ar@/_1pc/[ll]_{gg'}   \ast}
$$
The vertical composition is only well defined if the lower edge of the first $2$-morphism matches the upper edge of the second. It
is given by the formula
$$ \xymatrix{  &\ast& e&\ar@/^1pc/[ll]^{h}\ar@/_1pc/[ll]_{g}   \ast\\ &&&&=&  \ast& e'e&\ar@/^1pc/[ll]^{k}\ar@/_1pc/[ll]_{g}   \ast
\\ &  \ast& e'&\ar@/^1pc/[ll]^{k}\ar@/_1pc/[ll]_{h}   \ast  }$$

The axioms for a Lie crossed module precisely guarantee that these operations define a strict $2$-group.
For example, the fact that the horizontal composition is well defined follows from condition $(1)$.
Given a diagram

$$ 
\xymatrix{  \ast & e&\ar@/^1pc/[ll]^{h}\ar@/_1pc/[ll]_{g}     \ast & e'&\ar@/^1pc/[ll]^{h'}\ar@/_1pc/[ll]_{g'} \\ \\
\ast & f&\ar@/^1pc/[ll]^{k}\ar@/_1pc/[ll]_{h}     \ast & f'&\ar@/^1pc/[ll]^{k'}\ar@/_1pc/[ll]_{h'} }
$$

there are, in principle, two ways to compose: first horizontaly and then vertically, or the other way around. The fact
that these two compositions coincide follows from condition $(2)$ in the definition of a crosse module.
\begin{definition}
A differential Lie crossed module is a Lie algebra homomorphism $\delta: \mathfrak{e} \to \mathfrak{g}$ together with a left action $\rhd$ of $\mathfrak{g}$ on $\mathfrak{e}$ by Lie algebra derivations such that:
\begin{enumerate}
\item  For any $X\in \mathfrak{g}$ and $v \in \mathfrak{e}$:
\[\delta (X \rhd v)=[X, \delta v].\]
\item For any $v,w \in \mathfrak{e}$:
\[ (\delta v) \rhd w=[v,w].\] 
\end{enumerate} 
\end{definition}

\begin{remark}
\hspace{0cm}
\begin{enumerate}
\item Lie crossed modules differentiate to differential Lie crossed modules. If $\delta: E \to G$ is a Lie crossed module, the corresponding Lie algebra homomorphism together with the corresponding Lie algebra action form a differential Lie crossed module. Hence differentiation yields a functor
from Lie crossed modules to differential Lie crossed modules. We call this functor the differentiation functor.

\item Differential Lie crossed modules can also be understood as incarnations of certain types of differential graded Lie algebras. Given a differential Lie crossed module
$\delta: \mathfrak{e}\to \mathfrak{g}$, one considers $\delta$ as a 2-term complex with $\mathfrak{e}$ in degree $-1$
and $\mathfrak{g}$ in degree $0$. The graded Lie bracket is then defined in terms of the Lie bracket on $\mathfrak{g}$ and the action of $\mathfrak{g}$ on
$\mathfrak{e}$.
\item In these notes we are interested in Lie crossed modules associated to complexes of vector spaces.
This can be done using the correspondence between special differential graded Lie algebras and differential crossed Lie modules mentioned above:
One first takes $\End(V)$ and considers it as a differential graded Lie algebra with the commutator bracket $[\cdot,\cdot]$ and the differential $[\partial,\cdot]$. Next, one reduces this to a $2$-term differential graded Lie algebra
by considering its $2$-truncation. This yields the following result:
\end{enumerate}
\end{remark}

\begin{lemma}[Faria Martins-Mikovic \cite{FariaMikovic}, Faria Martins-Picken \cite{FariaPicken}]\label{FMalg}
Let $(V, \partial)$ be a cochain complex. There is a differential Lie crossed module $\mathsf{gl}(V)= \big( \delta: \mathsf{gl}^{-1}(V) \to \mathsf{gl}^{0}(V) \big)$ defined as follows:
\begin{enumerate}
\item The Lie algebra $\mathsf{gl}^0(V)$ is the vector space of degree zero cochain maps $V \rightarrow V$, endowed with the commutator bracket.
\item The Lie algebra $\mathsf{gl}^{-1}(V)$ is the quotient Lie algebra 
\[ \mathsf{gl}^{-1}(V):= \frac{\End^{-1}(V)}{[\partial ,\End^{-2}(V))]},\]
where $\End^{-1}(V)$ is the space of degree $-1$ endomorphisms of $V$ endowed with the Lie bracket:
\begin{equation}\label{bracketend}
 [s,t]= s \partial t - t\partial s +s t \partial - ts \partial,
\end{equation}
and $[\partial ,\End^{-2}(V))]$ is the ideal of $ \End^{-1}(V)$ which consists of elements of the form $ \partial h - h \partial $.
\item The homomorphism $ \delta: \mathsf{gl}^{-1}(V) \to \mathsf{gl}^{0}(V)$ is given by 
\[ \delta(s) := \partial s + s \partial  .\]
\item The action $\rhd$ of $\mathsf{gl}^{0}(V)$ on $\mathsf{gl}^{-1}(V)$ is given by:
\[ \phi \rhd s:= \phi s - s \phi.\]
\end{enumerate}
\end{lemma}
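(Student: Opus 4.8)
The plan is to verify directly that the data given in the statement—namely $\mathsf{gl}^0(V)$, $\mathsf{gl}^{-1}(V)$, the map $\delta$, and the action $\rhd$—satisfy all the axioms of a differential Lie crossed module. The whole proof is a sequence of routine but careful checks, organized as follows. First I would check that $\mathsf{gl}^0(V)$ is a Lie algebra: degree-zero cochain maps are closed under the commutator bracket, which is standard. Next, and this is the first point requiring attention, I would check that the bracket $[s,t] = s\partial t - t\partial s + st\partial - ts\partial$ on $\End^{-1}(V)$ is actually a Lie bracket (antisymmetry is immediate; the Jacobi identity needs a computation) and, crucially, that $[\partial, \End^{-2}(V)]$ is a two-sided ideal with respect to it, so that the quotient $\mathsf{gl}^{-1}(V)$ inherits a Lie algebra structure. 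For the ideal property one takes $h \in \End^{-2}(V)$ and computes $[s, \partial h - h\partial]$ for arbitrary $s \in \End^{-1}(V)$, showing the result is again of the form $\partial h' - h'\partial$.

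Second, I would check that $\delta(s) = \partial s + s\partial$ is well-defined on the quotient, i.e. that it kills $[\partial, \End^{-2}(V)]$: indeed $\delta(\partial h - h\partial) = \partial(\partial h - h\partial) + (\partial h - h\partial)\partial = -\partial h \partial + \partial h \partial = 0$ since $\partial^2 = 0$. One also checks that $\delta(s)$ is genuinely a degree-zero cochain map, i.e. commutes with $\partial$: $[\partial, \partial s + s\partial] = \partial s \partial + \partial s \partial - \ldots = 0$, again using $\partial^2=0$. Then I would verify $\delta$ is a Lie algebra homomorphism: $\delta([s,t]) = [\delta s, \delta t]$, expanding both sides in terms of $\partial$, $s$, $t$ and cancelling using $\partial^2 = 0$. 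Similarly, the action $\phi \rhd s = \phi s - s\phi$ should be checked to land in $\mathsf{gl}^{-1}(V)$ (i.e. $\phi s - s\phi$ has degree $-1$, which is clear), to descend to the quotient (if $s = \partial h - h\partial$ then $\phi s - s\phi = \partial(\phi h - h\phi) - (\phi h - h\phi)\partial$ using that $\phi$ commutes with $\partial$), to be an action by Lie algebra derivations of the bracket \eqref{bracketend}, and to satisfy the two compatibility axioms: $\delta(\phi \rhd s) = [\phi, \delta s]$ and $(\delta s) \rhd t = [s,t]$. The first expands to $\partial(\phi s - s\phi) + (\phi s - s\phi)\partial = \phi(\partial s + s\partial) - (\partial s + s\partial)\phi$ using $[\partial,\phi]=0$; the second is the identity $(\partial s + s\partial) t - t(\partial s + s\partial) = s\partial t - t\partial s + st\partial - ts\partial$ in the quotient—note the two sides differ by $\partial(st) - (st)\partial \ldots$, so one must track carefully which terms lie in the ideal.

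The main obstacle, such as it is, is bookkeeping: signs and the order of operators must be tracked scrupulously, and several identities hold only modulo the ideal $[\partial,\End^{-2}(V)]$, so one must repeatedly recognize expressions of the form $\partial h - h\partial$ and discard them. The key structural fact that makes everything work is $\partial^2 = 0$, which is what collapses the otherwise messy expansions. Since this lemma is attributed to Faria Martins--Mikovi\'c \cite{FariaMikovic} and Faria Martins--Picken \cite{FariaPicken}, I would also remark that it can alternatively be deduced from the correspondence, mentioned in the preceding remark, between $2$-term differential graded Lie algebras and differential Lie crossed modules: one applies that correspondence to the $2$-truncation of the dg Lie algebra $(\End(V), [\cdot,\cdot], [\partial,\cdot])$, and the formulas in the statement are exactly what this truncation produces. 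This gives a conceptual proof that bypasses the explicit verifications, reducing the lemma to the (already established) general correspondence plus the observation that $(\End(V),[\partial,\cdot])$ is a dg Lie algebra.
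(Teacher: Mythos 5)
Your proposal is correct, but it is worth noting that the paper itself gives no proof of Lemma \ref{FMalg}: the result is attributed to Faria Martins--Mikovi\'c and Faria Martins--Picken, and the only justification offered is the preceding remark, namely that $\mathsf{gl}(V)$ arises as the $2$-truncation of the differential graded Lie algebra $(\End(V),[\cdot,\cdot],[\partial,\cdot])$ under the correspondence between $2$-term dg Lie algebras and differential Lie crossed modules. That conceptual route is exactly the alternative you sketch in your last paragraph, so you have in effect covered the paper's argument and supplemented it with the direct verification it omits. Your axiom-by-axiom check is sound: the bracket \eqref{bracketend} is the commutator of the associative product $s\bullet t:=s[\partial,t]$ (associativity follows since $[\partial,t]$ commutes with $\partial$), which gives Jacobi on the nose; $[\partial,\End^{-2}(V)]$ is an ideal because $s\bullet[\partial,h]=0$ and $[\partial,h]\bullet s=[\partial,h[\partial,s]]$; and you correctly isolate the one genuine subtlety, that axiom $(2)$ holds only modulo the ideal, the discrepancy being precisely $\partial(st)-(st)\partial=[\partial,st]$ with $st\in\End^{-2}(V)$. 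The direct computation buys self-containedness and makes visible exactly where the quotient is needed; the truncation argument buys brevity at the cost of invoking the general correspondence, which the paper only states informally.
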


The following lemma appeared in \cite{FariaPicken} (see also \cite{KP}). We reproduce a proof for the convenience of the reader:

\begin{lemma}
 Let $(V, \partial)$ be a cochain complex. Then there is a 
Lie crossed module \[\mathsf{GL}(V):= \big( \delta: \mathsf{GL}^{-1}(V) \to \mathsf{GL}^{0}(V \big),\] 
defined as follows:
\begin{enumerate}
\item The Lie group $\mathsf{GL}^0(V)$ is the group of automorphisms of the cochain complex $(V,\partial)$.
\item The Lie group $\mathsf{GL}^{-1}(V)$ is the quotient 
\[ \mathsf{GL}^{-1}(V):= \frac{\End^{-1}(V)'}{[\partial ,\End^{-2}(V)]}.\]
Here $\End^{-1}(V)'$ is the space of degree $-1$ endomorphisms $s$ of $V$ such that $[\partial,s]+\id$ is invertible. The group structure 
is given by the formula:
\[ s \ast t := s+ t + s\partial t + st \partial = s + t + s [\partial,t].\]
The space $[\partial, \End^{-2}]$ is a normal subgroup of $\End^{-1}(V)'$.

\item The homomorphism $\delta: \mathsf{GL}^{-1}(V) \to \mathsf{GL}^{0}(V)$ is given by 
\[ \delta(s) := [\partial,s]+ \id  .\]
\item The action $\rhd$ of $\mathsf{GL}^0(V)$ on $\mathsf{GL}^{-1}(V)$ is given by:
\[ \phi \rhd s:= \phi s \phi^{-1}.\]
\end{enumerate}
\end{lemma}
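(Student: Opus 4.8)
\section*{Proof proposal}

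The plan is to verify the defining data of a Lie crossed module directly; the only genuine subtlety is that the second crossed-module axiom (the Peiffer identity) holds only after passing to the quotient by $[\partial,\End^{-2}(V)]$, which is precisely what forces that quotient on us. Throughout I would exploit that $[\partial,\cdot\,]$ is a derivation of degree $+1$ on the associative algebra $(\End(V),\circ)$ which squares to zero, since $\partial^2=0$ implies $[\partial,[\partial,a]]=0$ for all $a$. In particular, for $s\in\End^{-1}(V)$ the operator $\delta(s)=[\partial,s]+\id$ commutes with $\partial$, so it is a chain map, and $[\partial,\delta(s)^{-1}]=0$ whenever $\delta(s)$ is invertible.

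First I would establish the group structure on $\mathsf{GL}^{-1}(V)$. A short computation with Koszul signs gives $[\partial,s[\partial,t]]=[\partial,s][\partial,t]$ for $s,t$ of degree $-1$, whence
\[ \delta(s\ast t)=[\partial,s]+[\partial,t]+[\partial,s][\partial,t]+\id=\delta(s)\,\delta(t).\]
Using the same identity one checks that $\ast$ is associative on $\End^{-1}(V)$ with two-sided unit $0$, and that $-\,s\circ\delta(s)^{-1}$ is a two-sided $\ast$-inverse of any $s\in\End^{-1}(V)'$. Since $\delta(s\ast t)=\delta(s)\delta(t)$ is invertible whenever $\delta(s)$ and $\delta(t)$ are, $\End^{-1}(V)'$ is closed under $\ast$ and under inverses, hence is a group, and $\delta\colon\End^{-1}(V)'\to\mathsf{GL}^0(V)$ is a group homomorphism into the automorphisms of $(V,\partial)$. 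Next I would observe that every $[\partial,h]$ with $h\in\End^{-2}(V)$ satisfies $\delta([\partial,h])=\id$, so lies in $\End^{-1}(V)'\cap\ker\delta$, and that $x\ast[\partial,h]=x+[\partial,h]$; this shows at once that $[\partial,\End^{-2}(V)]$ is an abelian subgroup and that conjugating $[\partial,h]$ by $s$ produces $[\partial,\,h\circ\delta(s)^{-1}]$, again in $[\partial,\End^{-2}(V)]$. Hence this subgroup is normal, $\mathsf{GL}^{-1}(V)$ is a group, and $\delta$ descends to $\mathsf{GL}^{-1}(V)\to\mathsf{GL}^0(V)$.

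For the action I would note that for $\phi\in\mathsf{GL}^0(V)$ the conjugation $a\mapsto\phi a\phi^{-1}$ commutes with $[\partial,\cdot\,]$ (as $\phi$ is a chain map), so it is a $\ast$-automorphism of $\End^{-1}(V)'$ preserving $[\partial,\End^{-2}(V)]$; therefore $\phi\rhd s:=\phi s\phi^{-1}$ descends to a smooth action of $\mathsf{GL}^0(V)$ on $\mathsf{GL}^{-1}(V)$ by group automorphisms. The first crossed-module axiom is then immediate, since $\delta(\phi s\phi^{-1})=\phi[\partial,s]\phi^{-1}+\id=\phi\,\delta(s)\,\phi^{-1}$. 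All structure maps being visibly smooth, what remains is the Peiffer identity.

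The step I expect to be the main obstacle is this Peiffer identity, $(\delta s)\rhd t=s\ast t\ast s^{-1}$. Expanding both sides using $s^{-1}=-\,s\circ\delta(s)^{-1}$ and $\delta(s)=\id+[\partial,s]$, one finds
\[ s\ast t\ast s^{-1}=(t+s[\partial,t])\circ\delta(s)^{-1},\qquad (\delta s)\rhd t=(t+[\partial,s]\,t)\circ\delta(s)^{-1},\]
which differ by $([\partial,s]t-s[\partial,t])\circ\delta(s)^{-1}=[\partial,\,s t\circ\delta(s)^{-1}]$, an element of $[\partial,\End^{-2}(V)]$; as $\ast$-multiplication by such an element coincides with ordinary addition, the two sides agree in $\mathsf{GL}^{-1}(V)$. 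This completes the verification. As a consistency check, differentiating this construction returns the differential Lie crossed module $\mathsf{gl}(V)$ of Lemma \ref{FMalg}; one could alternatively obtain $\mathsf{GL}(V)$ by integrating $\mathsf{gl}(V)$, but the direct verification above is the most economical.
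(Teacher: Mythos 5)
Your proposal is correct and follows essentially the same route as the paper: direct verification of associativity, the inverse formula $-s\,\delta(s)^{-1}$, the homomorphism property $\delta(s\ast t)=\delta(s)\delta(t)$, normality of $[\partial,\End^{-2}(V)]$ via conjugation landing in $[\partial,\,h\,\delta(s)^{\pm 1}]$, and the Peiffer identity holding only modulo $[\partial,\End^{-2}(V)]$ because the two sides differ by $[\partial, st\,\delta(s)^{-1}]$. The only (cosmetic) difference is that you isolate the derivation identity $[\partial,s[\partial,t]]=[\partial,s][\partial,t]$ up front and compare the two sides of the Peiffer identity after right-dividing by $\delta(s)$, whereas the paper multiplies through by $\delta(r)$; the content is identical.
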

\begin{proof}
First observe that $\End^{-1}(V)'$ is open in $\End^{-1}(V)$. Clearly, the operation $\ast$ is smooth. Let us prove
that it gives  $\End^{-1}(V)'$ the structure of a Lie group. To check associativity we compute:
\begin{eqnarray*}
 (r\ast s )\ast t&=& (r+s + r\partial s + rs \partial) \ast t \\
&=&r+s  + r\partial s + rs \partial + t+ 
(r+s + r\partial s )\partial t+(r+s + r\partial s + rs \partial)t\partial\\
&=& r\ast (s\ast t).  
\end{eqnarray*}
Similarly, one easily checks that $\delta(s \ast t )= \delta(s)\delta(t)$, which implies that $\End^{-1}(V)'$ is closed
under the operation $\ast$.
It remains to prove the existence of inverses with respect to the operation $\ast$. Direct computation shows that:
\[ s \ast (- s\delta(s)^{-1})= (-s\delta(s)^{-1}) \ast s=0.\]

Since $[\partial, \End^{-2}(V)]$ is a closed subset of $\End^{-1}(V)$, it suffices to show that it is a normal subgroup in order to establish that it is a normal Lie subgroup.
To check that it is a subgroup we compute
\[ [\partial,x]\ast [\partial,y] = [\partial,x]+ [\partial,y]=[\partial,x+y].\]

Next, we need to prove that $[\partial, \End^{-2}(V)]$ is a normal subgroup. For this we observe that

\[ r^{-1} \ast [\partial,x] \ast r= [\partial, x \delta(r)]. \]

In order to prove that $\delta$ is a homomorphism we compute
\[ \delta (s \ast t)=\delta(s+t + s\partial t+ st\partial)=[\partial, s+t+s \partial t+ st\partial] +\id=([\partial,s]+\id)([\partial,t]+
\id)=\delta(s) \delta(t).
\]
Finally, we need to check that conditions $(1)$ and $(2)$ in the definition of a Lie crossed module are satisfied. For equation $(1)$
we compute:
\[ \delta(\phi \rhd t)= \delta( \phi t \phi^{-1})= \id + [\partial, \phi t \phi^{-1}]= \phi(\id + [\partial, t]) \phi^{-1}= \phi \rhd \delta(t).\]
For equation $(2)$ one needs to prove that
\[ \delta(r) s \delta(r)^{-1}= r \ast s \ast r^{-1},\]
which is equivalent to
\[ \delta(r) s = (r \ast s \ast r^{-1}) \delta(r).\]
We then compute the right hand side:

\begin{eqnarray*}
(r \ast s \ast r^{-1}) \delta(r)&=&(r^{-1} +(r\ast s)\delta(r^{-1})) \delta(r)= r^{-1}\delta(r)+ r +s +r\partial s + rs\partial\\
&=&-r+r +s +r\partial s + rs\partial= s+ r\partial s + rs\partial= \delta(r)s+[\partial,rs]= (\delta(r)s) \ast [\partial, rs].
\end{eqnarray*}
Since the equation only needs to hold in the quotient by $[\partial, \End^{-2}(V)]$, this concludes the proof.

\end{proof}

\begin{lemma}
  Let $(V, \partial)$ be a cochain complex. The differential Lie crossed module 
$\mathsf{gl}(V)$ is the result of applying the differentiation functor to the Lie crossed module $\mathsf{GL}(V)$. 
\end{lemma}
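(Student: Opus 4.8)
The plan is to unwind the definition of the differentiation functor and to check, piece by piece, that it reproduces the data of $\mathsf{gl}(V)$ from Lemma~\ref{FMalg}. Recall that the differentiation functor sends a Lie crossed module $\delta\colon E\to G$ with action $\rhd$ to the Lie algebra homomorphism $d\delta_e\colon \Lie(E)\to \Lie(G)$ together with the derived action of $\Lie(G)$ on $\Lie(E)$; the fact that the result is automatically a differential Lie crossed module was already noted, so all that remains is to match the four ingredients: $\Lie(\mathsf{GL}^0(V))$, $\Lie(\mathsf{GL}^{-1}(V))$, the derivative of $\delta(s)=[\partial,s]+\id$, and the derivative of $\phi\rhd s=\phi s\phi^{-1}$.

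The degree-zero part is immediate: $\mathsf{GL}^0(V)$ is the closed subgroup of the ordinary general linear group of $V$ consisting of the automorphisms commuting with $\partial$, so its Lie algebra is the space of degree-zero endomorphisms $X$ with $X\partial=\partial X$ --- that is, the degree-zero cochain maps $\mathsf{gl}^0(V)$ --- equipped with the commutator bracket. This is precisely item (1) of Lemma~\ref{FMalg}.

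The degree $-1$ part is where the only genuine computation lies. The set $\End^{-1}(V)'$ is an open neighbourhood of $0$ inside the vector space $\End^{-1}(V)$, with $0$ as its identity element, so $\Lie(\End^{-1}(V)')=\End^{-1}(V)$ as a vector space, and the bracket is obtained by antisymmetrising the bilinear term of the product $s\ast t=s+t+s[\partial,t]$; tracking the graded signs in $[\partial,t]$, this antisymmetrisation yields exactly the bracket~\eqref{bracketend}. The normal subgroup $[\partial,\End^{-2}(V)]$ is a linear subspace on which $\ast$ restricts to addition --- as witnessed by $[\partial,x]\ast[\partial,y]=[\partial,x+y]$, the cross term vanishing because $[\partial,[\partial,y]]=0$ --- so its Lie algebra is $[\partial,\End^{-2}(V)]$ itself. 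Passing to the quotient Lie group thus identifies $\Lie(\mathsf{GL}^{-1}(V))$ with $\End^{-1}(V)/[\partial,\End^{-2}(V)]=\mathsf{gl}^{-1}(V)$ carrying the induced bracket, which matches item (2).

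Finally, differentiating $\delta(s)=[\partial,s]+\id$ at $s=0$ gives $s\mapsto[\partial,s]=\partial s+s\partial$, which lands among cochain maps since $[\partial,[\partial,s]]=0$, reproducing item (3); and differentiating $\phi\rhd s=\phi s\phi^{-1}$ at $\phi=\id$ in a direction $X\in\mathsf{gl}^0(V)$ gives $s\mapsto Xs-sX$, reproducing item (4). Conjugation by an automorphism of $(V,\partial)$ preserves $[\partial,\End^{-2}(V)]$, so both the action and its derivative descend to the quotient without difficulty. Comparing the four ingredients then identifies the differentiation of $\mathsf{GL}(V)$ with $\mathsf{gl}(V)$. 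The main obstacle is the bracket computation for $\mathsf{gl}^{-1}(V)$: one has to be careful with the graded signs when expanding $s\ast t$ and confirm that the antisymmetrised quadratic part is genuinely~\eqref{bracketend}, and that taking Lie algebras is compatible with the quotient by $[\partial,\End^{-2}(V)]$.
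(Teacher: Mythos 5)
Your proposal is correct, and it reaches the same conclusion as the paper while handling the one nontrivial step --- the identification of the bracket on $\Lie(\mathsf{GL}^{-1}(V))$ --- by a genuinely different route. The paper writes down an explicit exponential map for the group $(\End^{-1}(V)',\ast)$, namely $\exp(A)=\sum_{i,j\ge 0}\tfrac{1}{(i+j+1)!}(A\partial)^iA(A\partial)^j$, and then computes $[A,B]=\tfrac{d}{dt}\big|_{t=0}\mathrm{Ad}_{\exp(tA)}(B)$ directly from the group law; you instead invoke the standard fact that for a Lie group realized as an open neighbourhood of $0$ in a vector space with identity $0$ and multiplication $s\ast t=s+t+Q(s,t)$ ($Q$ bilinear, here $Q(s,t)=s[\partial,t]=s\partial t+st\partial$ with no higher-order terms), the bracket is the antisymmetrisation $Q(s,t)-Q(t,s)=s\partial t-t\partial s+st\partial-ts\partial$, which is exactly~\eqref{bracketend}. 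Your argument is shorter and avoids verifying the exponential formula; the paper's version has the side benefit of producing an explicit $\exp$ that could be reused elsewhere. The remaining ingredients --- $\Lie(\mathsf{GL}^0(V))=\mathsf{gl}^0(V)$, the compatibility of $\Lie$ with the quotient by the normal subgroup $[\partial,\End^{-2}(V)]$ (on which $\ast$ restricts to addition), and the differentiation of $\delta$ and of the conjugation action --- are treated essentially as in the paper, and your observation that conjugation by a cochain automorphism preserves $[\partial,\End^{-2}(V)]$, so everything descends to the quotient, is a point the paper leaves implicit.
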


\begin{proof}
Clearly, the Lie algebra of the group $\mathsf{GL}^{0}(V)$ of automorphisms of $(V,\partial)$ is the Lie algebra $\mathsf{gl}^{0}(V)$ of endomorphisms
of $(V,\partial)$. Next, let us compute the tangent space at the identity of 
\[\mathsf{GL}^{-1}(V)=\frac{\End^{-1}(V)'}{[\partial ,\End^{-2}(V)]}.\]

Since $\End^{-1}(V)'$ is open in $\End^{-1}(V)$, and $[\partial ,\End^{-2}(V)]$ is a linear subspace, the tangent space at the 
identity is 
\[ \mathsf{gl}^{-1}(V):= \frac{\End^{-1}(V)}{[\partial ,\End^{-2}(V))]}.\]
In order to prove that the Lie bracket coincides with that on $\mathsf{gl}^{-1}(V)$ it suffices to prove that the bracket 
on the Lie algebra of the group $\End^{-1}(V)'$ is given by the formula in equation (\ref{bracketend}). To prove this, we observe that
the exponential map in the Lie algebra $\End^{-1}(V)'$ is given by the formula

\[ \exp(A)= \sum_{i,j \geq0} \frac{1}{(i+j+1)!}(A \partial )^{i}A (A \partial)^j.\]
With this formula at hand, one can compute the bracket:

\begin{eqnarray*}
 [A,B]&=&\frac{d}{dt}\bigg|_{t=0} \mathsf{Ad}_{\exp(tA)}(B)\\
&=& \frac{d}{dt}\bigg|_{t=0} \frac{d}{ds}\bigg|_{s=0} \exp(tA)\ast \exp(sB) \ast \exp(-tA)\\
&=& \frac{d}{ds}\bigg|_{s=0} \bigg( \frac{d}{dt}\bigg|_{t=0} \exp(tA)\ast \exp(sB) + \frac{d}{dt}\bigg|_{t=0}\exp(sB)\ast \exp(-tA)\bigg)\\
&=& \frac{d}{ds}\bigg|_{s=0} \bigg( A\partial \exp(sB)+ A \exp(sB) \partial - \exp(sB)\partial A -\exp(sB)A \partial\bigg)\\ 
&=& A \partial B + A B \partial - B \partial A - BA \partial.
\end{eqnarray*}
Next, we need to prove that the differential of the homomorphism $\delta: \mathsf{GL}^{-1}(V)\to \mathsf{GL}^{0}(V)$ is the homomorphism
$\delta: \mathsf{gl}^{-1}(V)^\to \mathsf{gl}^{0}(V)$. To check this we compute:

\[ \frac{d}{dt}\bigg|_{t=0} \delta(tA)= \frac{d}{dt}\bigg|_{t=0} (t[\partial,A]+ \id)=[\partial,A].\]

Finally, it remains to show that by differentiating the action of $\mathsf{GL}^0(V)$ on $\mathsf{GL}^{-1}(V)$, one obtains the corresponding
infinitesimal action. Let us denote by $\rhd'$ the infinitesimal action obtained
by differentiating the global one. Then, one computes:

\begin{eqnarray*}
A \rhd'B &=& \frac{d}{dt}\bigg|_{t=0} \frac{d}{ds}\bigg|_{s=0} \exp(tA)\rhd \exp(sB)\\
&=& \frac{d}{dt}\bigg|_{t=0} \frac{d}{ds}\bigg|_{s=0} \exp(tA) \exp(sB) \exp(-tA)\\
&=& \frac{d}{ds}\bigg|_{s=0} \bigg( \frac{d}{dt}\bigg|_{t=0} \exp(tA) \exp(sB)+ \frac{d}{dt}\bigg|_{t=0} \exp(sB) \exp(-tA) \bigg)\\
&=& \frac{d}{ds}\bigg|_{s=0} \bigg(A \exp(sB)- \exp(sB) A \bigg)= AB-BA.\\
\end{eqnarray*}
\end{proof}

\begin{remark}
Let $\delta: E\to G$ be a Lie crossed module. We will use the following structures:
 \begin{enumerate}
  \item The action of $G$ on $E$ differentiates to an action of $G$ on $\mathfrak{e}$, which we denote by $\rhd$.
  In the case of $\mathsf{GL}(V)$, this action is simply given by conjugation.
  \item If we fix an element $e\in E$, we have the left multiplication map 
  $$L_e: E\to E, \quad e' \mapsto e \cdot e'$$
  and its differential $(L_e)_*$ at $1\in E$, which is an isomorphism from $T_1E$ to $T_e E$.
   In the case of $\mathsf{GL}(V)$, this map reads
   $$ (L_e)_*(X) = X + e [\partial,X].$$
	Similarly, we have the right multiplication map $R_e$ and its differential $(R_e)_*$.
 \end{enumerate}

\end{remark}

\begin{definition}
Let $M$ be a smooth manifold and $ \delta: \mathfrak{e}\to \mathfrak{g}$ a differential Lie crossed module.
A local 2-connection is a pair of differential forms $(A,B)$, such that $ A \in \Omega^1(M, \mathfrak{g})$, $B\in \Omega^2(M,\mathfrak{e})$ and 
\[ \delta B = dA + \frac{1}{2} [A,A] =: F_A.\]
The curvature 3-form of a $2$-connection is the differential form
\[\mathcal{M}(A,B):=dB+ A\wedge^\rhd B \in \Omega^3(M,\mathfrak{e}).\]
A local 2-connection is said to be flat if its curvature $3$-form is zero.

\end{definition}

\begin{definition}
Given a smooth manifold $M$ and a differential crossed module $\mathsf{g}$, we denote by
$\mathsf{Flat}(M,\mathsf{g})$ the set of all flat local $2$-connections on $M$ with values in $\mathsf{g}$.
\end{definition}

\subsection{Holonomies with values in crossed modules}

Given a flat connection on a trivial vector bundle $M\times V$ over a manifold $M$, the holonomy construction gives a representation
of the fundamental groupoid of $M$ into $\mathsf{GL}(V)$. Analogously, given a crossed module  $\delta: E \to G$, one may be interested in morphisms from the fundamental $2$-groupoid of $M$ to the $2$-group $\mathcal{G}$ determined by $\delta: E\to G$. Since for our purposes it is not necessary to discuss the definition of the fundamental $2$-groupoid of a space, we will define directly what such a representation is.

\begin{definition}
A $2$-path in a manifold $M$ is a piecewise smooth map $\Gamma: I^2 \to M$ which is constant on the vertical sides. There are (nonassociative!) horizontal and vertical compositions defined for $2$-paths in an obvious way, provided the appropriate edges match. 
\end{definition}

\begin{remark}
Given a $2$-path $\Gamma: I^2 \to M$ we use the following notation
for the paths corresponding to the horizontal edges:
\[ \Gamma_1(t):=\Gamma(t,1); \,\,\ \Gamma_0(t):=\Gamma(t,0).\]
More generally, we write $\Gamma_s: I\to M$ for the path given by fixing the vertical coordinate to be $s\in I$, i.e.
$\Gamma_s(t):=\Gamma(t,s)$.
\end{remark}

\begin{definition}\label{2rep}
Let $M$ be a smooth manifold. A representation of the fundamental $2$-groupoid of $M$ on a Lie crossed module  $\delta: E \to G$ consists of the following data:
\begin{itemize}
\item For every piecewise smooth path $\gamma: I \to M$, there is an element $\mathsf{Hol}(\gamma)\in G$.
\item For every 2-path $\Gamma: I^2 \to M$, there is an element $\mathsf{Hol}(\Gamma) \in E$.
\end{itemize}
This assignment satisfied the following conditions:
\begin{enumerate}
\item If $\gamma$ is a constant path then $\mathsf{Hol}(\gamma)=1$.
\item If $\Gamma$ is a constant $2$-path then $ \mathsf{Hol}(\Gamma)=1$.
\item If the $2$-paths $\Gamma$ and $\Gamma'$ are homotopic relative to the boundary then $\Hol(\Gamma)=\Hol(\Gamma')$.
\item If $\Gamma$ is a $2$-path then:
\[ \delta (\Hol(\Gamma))= \Hol(\Gamma_0)^{-1} \Hol(\Gamma_1).\]
In other words, $\Hol(\Gamma)$ is a two morphism in $\partial: E\to G$ 
from $\Hol(\Gamma_0)$ to $\Hol(\Gamma_1)$.
\item The assignment preserves composition of paths and vertical and horizontal composition of $2$-paths.
\end{enumerate}
\end{definition}

\begin{definition}
Given a Lie crossed module $\mathsf{G}$ and a manifold $M$, we denote by $\mathrm{Rep}(\pi_{\le 2}(M),\mathsf{G})$ the set of representations of the fundamental $2$-groupoid of $M$ in $\mathsf{G}$.
\end{definition}

Generalizing the case of ordinary flat principal bundles, representations with values in Lie crossed modules can be constructed 
from flat connections with values in the corresponding differential Lie crossed modules. The following theorem has been established in \cite{BaezS, FariaMikovic, FariaPicken2, Picken, SchreiberWaldorf}.

\begin{theorem}\label{theoremHol}
Let $\delta: E\to G$ be a Lie crossed module and suppose that $\alpha=(A,B)$ is a flat connection on $M$ with values in the differential Lie crossed module $\delta: \mathfrak{e} \to \mathfrak{g}$. Then there is a representation $\Hol_\alpha$ of the fundamental 
$2$-groupoid of $M$ in $\delta: E\to G$ defined as follows:
\begin{itemize}
\item The holonomy $\mathsf{Hol}_\alpha(\gamma) \in G$ associated to a path $\gamma$ is the usual holonomy associated to the connection $A$, i.e.  $\mathsf{Hol}_\alpha(\gamma)$ is equal to $g_\gamma(1)$, where $g_\gamma: [0,1]\to G$
is the solution to the differential equation
$$ \frac{d g_\gamma(t)}{d t} = - (R_{g_\gamma(t)})_* A\left(\frac{d \gamma}{dt} \right) $$
with boundary condition $g_\gamma(0) =1$.
\item The holonomy $\mathsf{Hol}_\alpha(\Gamma) \in E$ associated to a $2$-path $\Gamma$ is $h(1)$, where $h: [0,1]\to E$ is
the solution to the differential equation
\[ \frac{d h(s)}{d s} = (L_{h(s)})_* \left(\int_0^1 g_{\Gamma_s}(t)^{-1}\rhd B\left(\frac{\partial}{\partial t} \Gamma_s(t), \frac{\partial}{\partial s}\Gamma_s(t)\right) dt\right), \qquad h(0)=1,\]
and $(L_{e})_*$ is the differential at $1\in E$ of the map given by left multiplication
with $e$.

\end{itemize}
\end{theorem}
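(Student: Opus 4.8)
The plan is to verify directly that the two ODE-defined assignments satisfy conditions (1)--(5) of Definition~\ref{2rep}, so that they form a representation of the fundamental $2$-groupoid. Conditions (1) and (2) are immediate from uniqueness of solutions: for a constant path $\gamma$ one has $\dot\gamma\equiv 0$, so $\dot g_\gamma\equiv 0$ with $g_\gamma(0)=1$ forces $g_\gamma\equiv 1$; for a constant $2$-path the integrand $B(\partial_t\Gamma_s,\partial_s\Gamma_s)\,dt$ vanishes, so $h\equiv 1$. The same vanishing shows that any \emph{thin} $2$-path --- one whose differential has rank $<2$ everywhere --- has $\Hol(\Gamma)=1$, since then $\partial_t\Gamma_s$ and $\partial_s\Gamma_s$ are everywhere proportional and $B$ is a $2$-form. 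Reparametrization invariance of $\Hol(\gamma)$ and compatibility with concatenation of paths are the usual properties of $\dot g=-(R_g)_\ast A(\dot\gamma)$, again by uniqueness; and from condition (4) together with triviality of thin $2$-paths one gets invariance of $\Hol(\gamma)$ under thin homotopy, so the assignment descends to $\pi_{\le 2}(M)$.

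For condition (4) I would invoke the classical variation-of-holonomy formula. Since $\Gamma$ is constant on the vertical sides, the $G$-valued path $s\mapsto g_{\Gamma_s}(1)$ satisfies, up to the evident signs,
\[
\tfrac{d}{ds}\,g_{\Gamma_s}(1)=\big(L_{g_{\Gamma_s}(1)}\big)_\ast\!\left(\int_0^1 \Ad_{g_{\Gamma_s}(t)^{-1}}\big(F_A(\partial_t\Gamma_s(t),\partial_s\Gamma_s(t))\big)\,dt\right),\qquad g_{\Gamma_0}(1)=\Hol(\Gamma_0),
\]
with $F_A=dA+\tfrac12[A,A]$. Substituting the defining relation $F_A=\delta B$ of a $2$-connection and using the crossed-module relation $\Ad_{g^{-1}}\circ\,\delta=\delta\circ(g^{-1}\rhd\,\cdot\,)$ (condition~(1) in the definition of a Lie crossed module, for the differentiated action on $\mathfrak e$), the integrand becomes $\delta$ of the integrand defining $Y(s)$; hence the left logarithmic derivative of $s\mapsto g_{\Gamma_s}(1)$ equals $\delta(Y(s))$. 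The left logarithmic derivative of $s\mapsto\delta(h(s))$ is also $\delta(Y(s))$, because $\dot h=(L_h)_\ast Y$ and $\delta\colon E\to G$ is a group homomorphism. Comparing the two equations (both of the form $\dot x=(L_x)_\ast\delta(Y(s))$) and using $h(0)=1$ gives $\delta(h(s))=\Hol(\Gamma_0)^{-1}g_{\Gamma_s}(1)$, which at $s=1$ is condition (4).

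Condition (5) for $2$-paths splits into vertical and horizontal composition. Vertical composition follows from left-invariance of $\dot h=(L_h)_\ast Y$: cutting the square at $s=\tfrac12$ and propagating the solution gives $\Hol(\Gamma)=\Hol(\Gamma'')\cdot\Hol(\Gamma')$ in $E$, which is the vertical composition law of the $2$-group $\mathcal G$. Horizontal composition comes from cutting the square at $t=\tfrac12$: the inner integral $\int_0^1 g_{\Gamma_s}(t)^{-1}\rhd B\,dt$ splits as $\int_0^{1/2}+\int_{1/2}^1$, and on the second piece the transport carries the holonomy across the first piece, producing --- after reading that transport via condition (4) --- exactly the conjugation $h'^{-1}\rhd(\,\cdot\,)$ of the horizontal composition law. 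Condition (3), homotopy invariance rel boundary, is where flatness $\mathcal M(A,B)=dB+A\wedge^\rhd B=0$ is used: for a boundary-fixing homotopy $u\mapsto\Gamma^u$ one differentiates $h^u(1)$ in $u$ by variation of parameters in the ODE for $h^u$; an integration by parts in $t$, rewriting the $dA$-contributions via $F_A=\delta B$ and regrouping the $\rhd$-terms through $\mathcal M(A,B)=dB+A\wedge^\rhd B$, turns the integrand into a total derivative over the square plus the pull-back of $\mathcal M(A,B)$; the boundary terms vanish because $\Gamma^u$ is constant on the vertical sides and has $u$-independent horizontal edges, and $\mathcal M(A,B)=0$ kills the rest, so $h^u(1)$ is independent of $u$.

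The step I expect to be the main obstacle is condition (3): the two-dimensional non-abelian Stokes computation expressing $\frac{d}{du}h^u(1)$ through $\int\mathcal M(A,B)$ needs a careful organisation of the iterated integrals and meticulous book-keeping of the interplay between the adjoint action on $\mathfrak g$ and the action $\rhd$ on $\mathfrak e$, along with the signs coming from the two transport equations. A minor further point is that paths and $2$-paths are only piecewise smooth, so one subdivides at the corners and appeals to (5); and one should check throughout that the orientation and inversion conventions --- the sign in $\dot g=-(R_g)_\ast A(\dot\gamma)$, the order of composition in $\mathcal G$ --- are chosen so that (4) and (5) come out in the stated form rather than inverted.
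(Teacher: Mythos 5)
The paper does not actually prove Theorem \ref{theoremHol}: it is quoted from the literature (Baez--Schreiber, Faria Martins--Picken, Schreiber--Waldorf, etc.), so there is no internal proof to compare yours against. Judged on its own, your outline follows the same route as those references, and your treatment of conditions (1), (2) and (4) is correct and essentially complete. In particular, for (4) you correctly isolate the variation-of-holonomy formula
\[
\frac{d}{ds}\,g_{\Gamma_s}(1)=\bigl(L_{g_{\Gamma_s}(1)}\bigr)_*\Bigl(\textstyle\int_0^1 \Ad_{g_{\Gamma_s}(t)^{-1}}F_A(\partial_t\Gamma_s,\partial_s\Gamma_s)\,dt\Bigr),
\]
which is exactly the ``key observation'' the paper records in the remark following the theorem, and the comparison of the two left-invariant ODEs via $F_A=\delta B$ and the differentiated crossed-module axiom $\Ad_{g^{-1}}\circ\,\delta=\delta\circ(g^{-1}\rhd\,\cdot\,)$ is the right argument. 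The vertical half of condition (5) also goes through as you say, by restarting the left-invariant ODE at $s=\tfrac12$ (do pin down which factor is which: with the paper's convention the composite comes out as $\Hol(\Gamma_{\mathrm{bottom}})\cdot\Hol(\Gamma_{\mathrm{top}})$, matching the $2$-group's vertical product $e'e$).

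The two places where your proposal is a sketch rather than a proof are exactly the two hard steps. For horizontal composition, ``the transport carries the holonomy across the first piece, producing the conjugation $h'^{-1}\rhd(\cdot)$'' is the right mechanism, but turning it into a proof requires factoring $g_{\Gamma_s}(t)=g^{(2)}_s(2t-1)\,g^{(1)}_s(1)$ for $t\ge\tfrac12$, writing down the ODE satisfied by the candidate product $(h'^{-1}\rhd e)\,e'$ using the Leibniz rule in $E$ and \emph{both} crossed-module axioms, and matching it with the ODE for $h$; none of that is automatic. For condition (3) you correctly identify that flatness $\mathcal{M}(A,B)=dB+A\wedge^{\rhd}B=0$ enters through a non-abelian Stokes computation for $\frac{d}{du}h^u(1)$, and you honestly flag it as the main obstacle --- but this computation \emph{is} the bulk of the theorem in the cited sources, so as it stands your argument establishes the easy conditions and defers the essential one. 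One further gap of the same kind: you derive from (4) and thinness that $\Hol(\gamma)$ is invariant under thin homotopy of paths, but you never verify that $\Hol(\Gamma)$ itself is well defined on the fundamental $2$-groupoid, i.e.\ unchanged under the thin reparametrizations of $2$-paths needed to make the (nonassociative) compositions of $2$-paths into a strict $2$-category; this again reduces to condition (3). So: right skeleton, consistent with the paper's pointers, but the load-bearing analytic step is asserted rather than carried out.
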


\begin{definition}\label{definitionHol}
Suppose $\mathsf{G}$ is a Lie crossed module with differential Lie crossed module $\mathsf{g}$. We denote by
\[ \mathsf{Hol}: \mathsf{Flat}(M,\mathsf{g}) = \{\text{flat connections on $M$ with values in } \mathsf{g} \} \to \mathrm{Rep}(\pi_{\le 2}(M),\mathsf{G})\]
the holonomy assignment from Theorem \ref{theoremHol}.
\end{definition}

\begin{remark}
The key observation concerning the definition of the holonomy of a $2$-path $\Gamma$ is that the variation of
$\mathsf{Hol}_\alpha(\Gamma_s): [0,1] \to G$ satisfies the differential equation:
$$  \frac{d \mathsf{Hol}_\alpha(\Gamma_s)}{d s} = (L_{\mathsf{Hol}(\Gamma_s)})_* \left(\int_0^1 \mathrm{Ad}_{g_{\Gamma_s}(t)^{-1}} F_A\left(\frac{\partial}{\partial t} \Gamma_s(t), \frac{\partial}{\partial s}\Gamma_s(t)\right) dt\right).$$
In the linear case, i.e. for $\mathsf{GL}(V)$, one can find an integral formula for 
$\Hol_\alpha(\Gamma)$. The following lemma appears in \S2.3.2 of \cite{FariaCirio} and also in the proof of Proposition $3.13$ of \cite{AC}.
\end{remark}

\begin{lemma}\label{lemma:2_hol_linear}
Let $\mathsf{GL}(V)$ be the Lie crossed module associated to a cochain complex $(V,\partial)$
and $\alpha$ a flat $2$-connection on $M$ with values in $\mathsf{gl}(V)$.
Then the $2$-holonomy of $\alpha$ associated to a $2$-path $\Gamma$ in $M$ is given by
$$ \Hol_\alpha(\Gamma) =  \left(  \int_0^1 \int_0^1 g_{\Gamma_s}(t)^{-1}\rhd B\left(\frac{\partial}{\partial t} \Gamma_s(t), \frac{\partial}{\partial s}\Gamma_s(t)\right)  \mathsf{Hol}_\alpha(\Gamma_s)^{-1} dt ds \right) \mathsf{Hol}_\alpha(\Gamma_1).$$
\end{lemma}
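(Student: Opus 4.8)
The plan is to recognise the right-hand side of the asserted formula, viewed as a function of its upper integration limits, as the solution of the initial value problem that defines $\Hol_\alpha(\Gamma)$ in Theorem \ref{theoremHol}, and then to conclude by uniqueness of solutions. Throughout, write $g_s(t):=g_{\Gamma_s}(t)\in\mathsf{GL}^0(V)$ for the parallel transport of $A$ along $\Gamma_s|_{[0,t]}$, let $B_s(t)$ and $F_s(t)$ denote the evaluations of $B$ and $F_A$ on the pair $\big(\tfrac{\partial}{\partial t}\Gamma_s(t),\tfrac{\partial}{\partial s}\Gamma_s(t)\big)$, and put $\omega(s):=\int_0^1 g_s(t)^{-1}\rhd B_s(t)\,dt\in\mathsf{gl}^{-1}(V)$, so that $\Hol_\alpha(\Gamma)=h(1)$, where $h$ solves $h'(s)=(L_{h(s)})_*(\omega(s))$ with $h(0)=1$. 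First I would unwind the crossed-module structure of $\mathsf{GL}(V)$: multiplication in $G=\mathsf{GL}^0(V)$ and the action $\rhd$ are composition and conjugation of endomorphisms, the unit of $E=\mathsf{GL}^{-1}(V)$ is the zero endomorphism, and $(L_e)_*(X)=X+e[\partial,X]$ by the Remark recalling the structures attached to a Lie crossed module. Hence the defining equation becomes the \emph{affine-linear} equation $h'(s)=\omega(s)+h(s)\,[\partial,\omega(s)]$ with $h(0)=0$, which admits a unique solution on the finite-dimensional vector space $\mathsf{gl}^{-1}(V)=\End^{-1}(V)/[\partial,\End^{-2}(V)]$; on $E\subseteq\mathsf{gl}^{-1}(V)$ this solution coincides with the $h$ of Theorem \ref{theoremHol}. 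Along the way one checks that composition with elements of $\mathsf{GL}^0(V)$, conjugation, and the operator $[\partial,-]$ all descend to the quotient $\mathsf{gl}^{-1}(V)$, which holds because automorphisms of $(V,\partial)$ commute with $\partial$.

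The crucial step is to rewrite $[\partial,\omega(s)]$ as a curvature integral. Because $A$ is valued in $\mathsf{gl}^0(V)$, each $g_s(t)$ is an automorphism of the complex $(V,\partial)$ and therefore commutes with $\partial$; using this together with the definition $\delta(s)=\partial s+s\partial$ of $\delta$ on $\mathsf{gl}^{-1}(V)$ from Lemma \ref{FMalg} and the $2$-connection constraint $\delta B=F_A$, one finds for every $(t,s)$ that $[\partial,\,g_s(t)^{-1}\rhd B_s(t)]=g_s(t)^{-1}F_s(t)\,g_s(t)=\Ad_{g_s(t)^{-1}}F_s(t)$, whence $[\partial,\omega(s)]=\int_0^1\Ad_{g_s(t)^{-1}}F_s(t)\,dt$. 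Substituting this into the variation formula recorded in the Remark immediately preceding the statement, and once more using that $(L_e)_*$ on $\mathsf{gl}^0(V)$ is left composition with $e$, yields
\[ \frac{d}{ds}\Hol_\alpha(\Gamma_s)=\Hol_\alpha(\Gamma_s)\,[\partial,\omega(s)]. \]

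Finally I would set $\tilde h(s):=\big(\int_0^s\omega(r)\,\Hol_\alpha(\Gamma_r)^{-1}\,dr\big)\,\Hol_\alpha(\Gamma_s)$. Then $\tilde h(0)=0$, the unit of $\mathsf{GL}^{-1}(V)$, and differentiating by the product rule and inserting the identity from the previous paragraph gives $\tilde h'(s)=\omega(s)+\tilde h(s)\,[\partial,\omega(s)]$. Thus $\tilde h$ and $h$ solve the same affine-linear initial value problem, so $\tilde h=h$; setting $s=1$ and moving the $s$-independent factor $\Hol_\alpha(\Gamma_s)^{-1}$ inside the inner integral over $t$ produces exactly the asserted expression. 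I expect the main obstacle to be the identity of the second paragraph — matching $[\partial,\omega(s)]$ with $\int_0^1\Ad_{g_s(t)^{-1}}F_s(t)\,dt$ via $\delta B=F_A$ and the chain-map property of holonomies — together with the routine but slightly delicate verification that all the operations in play are well-defined on the quotient $\mathsf{gl}^{-1}(V)$; granting these, the statement reduces to uniqueness of solutions of a linear ODE.
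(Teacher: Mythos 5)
Your proof is correct and follows essentially the same route as the paper: both unwind $(L_{h(s)})_*(X)=X+h(s)[\partial,X]$, identify $[\partial,\omega(s)]$ with $\int_0^1\Ad_{g_{\Gamma_s}(t)^{-1}}F_A\,dt=\mathsf{Hol}_\alpha(\Gamma_s)^{-1}\tfrac{d}{ds}\mathsf{Hol}_\alpha(\Gamma_s)$ via $\delta B=F_A$, and observe that $h(s)\mathsf{Hol}_\alpha(\Gamma_s)^{-1}$ has an explicitly integrable derivative. The only cosmetic difference is that you verify the candidate solution and invoke uniqueness, whereas the paper substitutes $X(s)=h(s)\mathsf{Hol}_\alpha(\Gamma_s)^{-1}$ and integrates directly; you are also somewhat more explicit about the identity $[\partial,g^{-1}\rhd B]=\Ad_{g^{-1}}F_A$, which the paper uses silently.
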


\begin{proof}
By definition, $\Hol(\Gamma)$ is the time-1-solution to the differential equation
\begin{eqnarray*}
 \frac{d h(s)}{d s} &=& (L_{h(s)})_* \left(\int_0^1 g_{\Gamma_s}(t)^{-1}\rhd B\left(\frac{\partial}{\partial t} \Gamma_s(t), \frac{\partial}{\partial s}\Gamma_s(t)\right) dt\right)\\
 &=& \int_0^1 g_{\Gamma_s}(t)^{-1}\rhd B\left(\frac{\partial}{\partial t} \Gamma_s(t), \frac{\partial}{\partial s}\Gamma_s(t)\right) dt \\
 & &  + h(s) \left(\int_0^1 \mathrm{Ad}_{g_{\Gamma_s}(t)^{-1}} F_A\left(\frac{\partial}{\partial t} \Gamma_s(t), \frac{\partial}{\partial s}\Gamma_s(t)\right) dt \right) \\
 &=& \int_0^1 g_{\Gamma_s}(t)^{-1}\rhd B\left(\frac{\partial}{\partial t} \Gamma_s(t), \frac{\partial}{\partial s}\Gamma_s(t)\right) dt + h(s) \mathsf{Hol}(\Gamma_s)^{-1} \frac{d \mathsf{Hol}_\alpha(\Gamma_s)}{ds}.
\end{eqnarray*}
From this it follows that $X(s):=h(s)\mathsf{Hol}(\Gamma_s)^{-1}$ satisfies the differential equation
$$
\frac{d X(s)}{ds} = \left(\int_0^1 g_{\Gamma_s}(t)^{-1}\rhd B\left(\frac{\partial}{\partial t} \Gamma_s(t), \frac{\partial}{\partial s}\Gamma_s(t)\right) dt \right) \mathsf{Hol}_\alpha(\Gamma_s)^{-1},
$$
which integrates to
$$
h(1) = \left( \int_0^1 \left(\int_0^1 g_{\Gamma_s}(t)^{-1}\rhd B\left(\frac{\partial}{\partial t} \Gamma_s(t), \frac{\partial}{\partial s}\Gamma_s(t)\right) dt \right) \mathsf{Hol}_\alpha(\Gamma_s)^{-1} ds \right) \mathsf{Hol}_\alpha(\Gamma_1).
$$

\end{proof}

\section{Relation between the two approaches}

\subsection{2-connections from superconnections}\label{subsection: conn = conn}

Here we compare flat superconnections with values in a cochain complex $V$ to connections with values in the 
corresponding associated differential Lie crossed module $\mathsf{gl}(V)$.


\begin{definition}
Let $\alpha= \alpha^1 + \alpha^2 +\alpha^3+ \dots $ be a 
superconnection on $M$ with values in $(V,\partial)$, i.e. $\alpha^i \in \Omega^i(M,\End^{1-i}(V))$.
Let $\mathsf{T}_{\le 2}(\alpha) = (A,B)$ be the pair of forms
given by
$$ A:= \alpha^1, \quad B:= -\pi \circ \alpha^2,$$
where $\pi$ is the natural projection map $ \End^{-1}(V)\to \End^{-1}(V)/[\partial,\End^{-2}(V)] = \mathsf{gl}^{-1}(V)$.
\end{definition}

\begin{lemma}\label{lemmaT1}
Let $\alpha= \alpha^1 + \alpha^2 +\alpha^3+ \dots $ be a 
superconnection on $M$ with values in $(V,\partial)$ which is flat.
Then $\mathsf{T}_{\le 2}(\alpha)$ is
a flat local 2-connection with values in $\mathsf{gl}(V)$.

\end{lemma}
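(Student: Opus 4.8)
The plan is to read off the flatness equations of the superconnection $\alpha$ in form-degrees $1$, $2$ and $3$ and to recognise them, one by one, as the three conditions that make $\mathsf{T}_{\le 2}(\alpha) = (A,B)$ a flat local $2$-connection with values in $\mathsf{gl}(V)$: that $A$ takes values in the degree-zero cochain maps, that $\delta B = F_A$, and that the curvature $3$-form $\mathcal{M}(A,B)$ vanishes. Throughout I would use the structure equations characterising flatness of $\alpha$, namely $[\partial,\alpha^n] + d\alpha^{n-1} + \sum_{i+j=n}\alpha^i\circ\alpha^j = 0$ for $n\ge 1$, together with the explicit description of $\mathsf{gl}(V)$ from Lemma \ref{FMalg}. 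First I would dispose of the degree-$1$ equation: here the quadratic sum over $i+j=1$ is empty, so the equation collapses to $[\partial,\alpha^1]=0$, which says precisely that $\alpha^1$ is valued in the space $\mathsf{gl}^0(V)$ of degree-zero cochain maps. Hence $A=\alpha^1\in\Omega^1(M,\mathsf{gl}^0(V))$, and since $B=-\pi\circ\alpha^2$ is by construction an element of $\Omega^2(M,\mathsf{gl}^{-1}(V))$, the pair $(A,B)$ has the correct type.

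Next I would establish $\delta B = F_A$ starting from the degree-$2$ equation $[\partial,\alpha^2] + d\alpha^1 + \alpha^1\circ\alpha^1 = 0$. Unwinding the graded multiplication on $\Omega(M)\otimes\End(V)$ gives $\alpha^1\circ\alpha^1 = \tfrac12[\alpha^1,\alpha^1]$, so $d\alpha^1 + \alpha^1\circ\alpha^1 = F_A$. On the other side, for an endomorphism-valued form of internal degree $-1$ the graded commutator with $\partial$ is $[\partial,\alpha^2] = \partial\alpha^2 + \alpha^2\partial$, which is exactly $\delta$ applied to the coefficients and which descends along $\pi$ to $\mathsf{gl}^{-1}(V)$; therefore $[\partial,\alpha^2] = \delta(\pi\circ\alpha^2) = -\delta B$. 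Combining the two computations gives $\delta B = F_A$, so $(A,B)$ is a local $2$-connection with values in $\mathsf{gl}(V)$.

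For flatness I would use the degree-$3$ equation $[\partial,\alpha^3] + d\alpha^2 + \alpha^1\circ\alpha^2 + \alpha^2\circ\alpha^1 = 0$. Expanding the two cross terms in $\Omega(M)\otimes\End(V)$ and comparing with the action $\phi\rhd s = \phi s - s\phi$ of $\mathsf{gl}^0(V)$ on $\mathsf{gl}^{-1}(V)$ identifies $\alpha^1\circ\alpha^2 + \alpha^2\circ\alpha^1$ with $\alpha^1\wedge^\rhd\alpha^2$, so the equation becomes $d\alpha^2 + \alpha^1\wedge^\rhd\alpha^2 = -[\partial,\alpha^3]$. Applying $-\pi$, which commutes with $d$ and with the action by $\alpha^1$, and recalling $B=-\pi\circ\alpha^2$, yields $\mathcal{M}(A,B) = dB + A\wedge^\rhd B = \pi([\partial,\alpha^3])$. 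But $\alpha^3$ is valued in $\End^{-2}(V)$, so $[\partial,\alpha^3]$ is valued in $[\partial,\End^{-2}(V)] = \ker\pi$; hence the right-hand side is zero, $\mathcal{M}(A,B)=0$, and $\mathsf{T}_{\le 2}(\alpha)$ is flat.

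The one place that requires genuine care — and which I would regard as the main (indeed essentially the only) obstacle — is the bookkeeping of Koszul signs in the graded algebra $\Omega(M)\otimes\End(V)$: one must verify that $\alpha^1\circ\alpha^1$ really reproduces $\tfrac12[A,A]$, that the graded commutator $[\partial,\cdot]$ on internal degree $-1$ agrees on the nose with the structure map $\delta$ of $\mathsf{gl}(V)$, and that $\alpha^1\circ\alpha^2 + \alpha^2\circ\alpha^1$ matches $\alpha^1\wedge^\rhd\alpha^2$ under the sign conventions chosen for $\mathcal{M}(A,B)$. Once these three identifications are pinned down, the lemma follows by direct substitution of the structure equations.
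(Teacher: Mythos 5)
Your proposal is correct and follows essentially the same route as the paper: read off the $n=1,2,3$ structure equations of the flat superconnection and identify them, respectively, with $A$ being valued in $\mathsf{gl}^0(V)$, with $\delta B = F_A$, and with $\mathcal{M}(A,B)=0$ modulo $[\partial,\End^{-2}(V)]$. The only difference is that you spell out the sign/Koszul bookkeeping that the paper leaves implicit, which is a faithful elaboration rather than a different argument.
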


\begin{proof}
Since $\alpha$ is a flat superconnection, the differential forms $\alpha^i$ satisfy the equations:
\begin{equation}\label{eqstructure flatness}
 [\partial, \alpha^n]+ d \alpha^{n-1}+ \sum_{i+j=n}\alpha^i\circ  \alpha^j=0,
 \end{equation}
for each $n \geq 1$. In particular, for $n=1$ this equation implies that $ \alpha^1$ takes values in the Lie subalgebra of chain maps, i.e.
$\alpha \in \Omega^1(M, \mathsf{gl}^0(V))$. Next, we need to prove the equation:
\[ \partial B = dA + \frac{1}{2} [A,A],\]
which is equivalent to equation (\ref{eqstructure flatness}) for $n=2$.
In order to prove flatness we need to show that the following equation holds:

\[dB+ A\wedge^\rhd B=0.\]

This is a consequence of equation (\ref{eqstructure flatness}) for $n=3$ and the fact that we work modulo $[\partial,\End^{-2}(V)]$.

\end{proof}
The previous lemma provides an assignment

$$ \mathsf{T}_{\le 2}: \mathrm{Rep}_{\infty}(TM,V) \to \mathsf{Flat}(M,\mathsf{gl}(V)), \quad \alpha \mapsto \mathsf{T}_{\le 2}(\alpha).$$

\subsection{Recovering representations of $\pi_{\le 2}(M)$}\label{subsection: rep = rep}

In this section we establish a comparison result between representations up to homotopy of the simplicial set $\pi_\infty(M)$ of singular chains on $M$ and representations of the $2$-groupoid $\pi_{\le 2}(M)$  on crossed modules.

\begin{definition}
Let $a$ and $b$ be the following embeddings of the $2$-simplex $\Delta_2=\{1\ge t \ge s\ge 0\}$ into the square:
$$ a(t,s):=(s,t) \qquad \textrm{and} \qquad b(t,s):=(t,s).$$
\begin{center}
\begin{tikzpicture}[scale=0.5]
\draw (0,0) rectangle (6,6);
\draw[fill=yellow, opacity=0.1] (0,0)--(6,6)--(0,6)--cycle;
\draw[fill=green, opacity=0.1] (0,0)--(6,6)--(6,0)--cycle;
\node(a) at (2,4){a};
\node(b) at (4,2){b};
\end{tikzpicture}
\end{center}

Given a map $\Gamma: I^2 \to M$, we denote the compositions $\Gamma\circ a $ and $\Gamma \circ b$ 
by $\Gamma_{a}$ and $\Gamma_{b}$, respectively.
\end{definition}

\begin{remark}
Recall from \S \ref{subsection: hol for superconnections} that we denote the set of representations
up to homotopy of $\pi_{\infty}(M)$ on a chain complex $(V,\partial)$ by $\mathrm{Rep}_\infty(\pi_\infty(M),V)$.
Moreover, recall that for any $2$-path $\Gamma: I^2 \to M$, we defined
\[ \Gamma_s(t) := \Gamma(t,s).\]
We will also use the notation $\Gamma_d(t):= \Gamma(t,t)$.
\end{remark}

The following technical lemma will be useful in proving the main result of this section.

\begin{lemma}\label{relativehomotopy}
Let $M$ be a smooth manifold, $(V,\partial)$ a cochain complex and \[\beta=\beta^1+ \beta^2+\dots,\] a well behaved representation up to homotopy of $\pi_\infty(M)$ in $(V,\partial)$. Suppose that $\Gamma, \Gamma'$ are $2$-paths in $M$ which are homotopic relative to their boundaries. Then:
\[ \beta^2(\Gamma_a)-\beta^2(\Gamma_b) = \beta^2(\Gamma'_a)-\beta^2(\Gamma'_b) \,\,\,\mathrm{mod} [\partial,\mathsf{End}^{-2}(V)].\]
\end{lemma}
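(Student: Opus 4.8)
The plan is to reduce the statement to a computation involving the explicit integral formula for $\beta^2$ from Remark 2.12, exploiting the fact that $\beta^2$ is built from $\sigma^*\alpha^1$ and $\sigma^*\alpha^2$ by integration over a family of simplices parametrized by $I$. First I would fix the flat superconnection $\alpha$ with $\beta = \int(\alpha)$ and note that, by the previous lemma, $\beta$ being well behaved means $\beta^2$ vanishes on thin $2$-simplices; this will let me discard terms later. Since $\Gamma$ and $\Gamma'$ are homotopic relative to their boundaries, there is a piecewise smooth map $H: I^3 \to M$ with $H(-,-,0) = \Gamma$, $H(-,-,1) = \Gamma'$, and $H(-,-,u)$ agreeing with $\Gamma$ on $\partial I^2$ for all $u$. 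I would then consider the quantity $\beta^2(H_u{}_a) - \beta^2(H_u{}_b)$ as a function of $u$ and show its derivative in $u$ vanishes modulo $[\partial,\End^{-2}(V)]$ — equivalently, that $\frac{d}{du}\big(\beta^2(H_u{}_a) - \beta^2(H_u{}_b)\big)$ is an exact term $\partial h - h\partial$.

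The key computational step is to differentiate the integral formula for $\beta^2$ along the homotopy. Writing $\beta^2(\sigma)$ as the sum over $m,n$ of integrals of $\Theta_{(m+n+1)}^*$ of wedge products of $\sigma^*\alpha^1$'s and one $\sigma^*\alpha^2$, the $u$-derivative produces, via Cartan's formula and Stokes, boundary contributions of two types: (i) terms supported on $\partial I^2$, which cancel between $a$ and $b$ because $H_u$ is fixed there (and because $a$, $b$ restrict to the same parametrization of the relevant edges up to the diagonal); and (ii) terms involving $d\alpha^i$ and $\alpha^i\circ\alpha^j$, which by the flatness equations $[\partial,\alpha^n] + d\alpha^{n-1} + \sum_{i+j=n}\alpha^i\circ\alpha^j = 0$ can be rewritten as $[\partial, (\text{something})]$, hence lie in $[\partial,\End^{-2}(V)]$ once one checks the degree bookkeeping (the relevant "something" is a degree $-2$ endomorphism-valued integral). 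The role of well-behavedness is to kill the potential leftover: the $2$-simplices appearing at $u=0$ and $u=1$ corner configurations that are thin contribute zero.

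The main obstacle I anticipate is the careful handling of the folding maps $a$, $b$, $\Theta$, $q$, $\lambda$ and their interaction with Stokes' theorem — in particular, verifying precisely which boundary strata of $\Delta_{m+n+1}\times I$ (and of the homotopy cube) contribute and checking that the "diagonal" boundary terms, where $a$ and $b$ meet along $\Gamma_d$, cancel in the difference $\beta^2(\Gamma_a) - \beta^2(\Gamma_b)$ rather than separately. A cleaner route, which I would attempt first, is to avoid differentiating the infinite sum directly and instead invoke the structure equation $[\partial,\beta^2(\sigma)] = -\beta^1(d_1\sigma) + (\beta^1\cup\beta^1)(\sigma)$: this shows $\beta^2(\Gamma_a) - \beta^2(\Gamma_b)$ is, modulo $[\partial,\End^{-2}(V)]$, determined by the $\beta^1$-holonomies along the edges of $\Gamma_a$ and $\Gamma_b$; since those edges are the horizontal edges of $\Gamma$ (which are fixed) together with diagonals that depend only on the boundary of $\Gamma$, well-behavedness plus homotopy-invariance of $\beta^1$ forces the difference to be homotopy-invariant. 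Either way, the genuinely delicate point is controlling the ambiguity only up to $[\partial,\End^{-2}(V)]$, not on the nose.
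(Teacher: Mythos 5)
Your proposal has a genuine gap, and it is the same gap in both routes you sketch: neither uses the degree-$3$ part of the structure, which is precisely what the paper's proof rests on. The paper applies the structure equation for $n=3$, namely $[\partial,\beta^3(\sigma)]=\beta^1\cup\beta^2(\sigma)-\beta^2(d_1\sigma)+\beta^2(d_2\sigma)-\beta^2\cup\beta^1(\sigma)$, to the tetrahedra of a triangulation of the homotopy cube $I^2\times I\to M$. Since the left-hand side lies in $[\partial,\End^{-2}(V)]$, each tetrahedron yields a linear relation among the $2$-holonomies of its faces that is valid in $\mathsf{gl}^{-1}(V)=\End^{-1}(V)/[\partial,\End^{-2}(V)]$; thinness of the faces lying over $\partial(I^2)$ (well-behavedness combined with the homotopy being relative to the boundary) kills most of the terms, and a purely combinatorial bookkeeping over nine tetrahedra produces exactly $\beta^2(\Gamma'_a)-\beta^2(\Gamma'_b)-\beta^2(\Gamma_a)+\beta^2(\Gamma_b)=0$.

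Your ``cleaner route'' via the $n=2$ equation cannot work: the identity $[\partial,\beta^2(\sigma)]=-\beta^1(d_1\sigma)+(\beta^1\cup\beta^1)(\sigma)$ only pins down $\delta(\beta^2(\sigma))$, and the kernel of $[\partial,-]$ on $\End^{-1}(V)$ is in general strictly larger than $[\partial,\End^{-2}(V)]$ (the quotient is $H^{-1}$ of the endomorphism complex), so the boundary data do not determine $\beta^2(\Gamma_a)-\beta^2(\Gamma_b)$ in $\mathsf{gl}^{-1}(V)$. (Also, the diagonal edge $\Gamma_d(t)=\Gamma(t,t)$ depends on the interior of $\Gamma$, not only on its boundary, although this cancels in the difference.) Your first route --- differentiating the iterated-integral formula in the homotopy parameter and applying Stokes --- presupposes $\beta=\int(\alpha)$ for a flat superconnection $\alpha$, whereas the lemma is stated for an arbitrary well-behaved representation up to homotopy of $\pi_\infty(M)$; and even in that special case the decisive cancellations are only asserted, not checked. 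The missing idea, in one sentence: one must invoke $\beta^3$ and the $n=3$ structure equation to obtain relations between $2$-holonomies that hold modulo $[\partial,\End^{-2}(V)]$, since nothing at the level of $\beta^1$ and $\beta^2$ alone controls that quotient.
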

\begin{proof}
We will use Equation (\ref{structure equations}) in Remark \ref{remstruc} for $n=3$, which reads:
\begin{equation}\label{three simplex}
[\partial, \beta^3(\sigma)]=   \beta^{1}\cup \beta^{2}(\sigma)-\beta^2(d_1(\sigma))+\beta^{2}(d_2 \sigma)- \beta^{2}\cup \beta^{1}(\sigma) .
\end{equation}
This can be expressed graphically where a picture of a shaded simplex represents the holonomy that is assigned to it.
The equation above then reads:

\begin{equation*}
\begin{tikzpicture}[scale=1]
    \coordinate (A) at (-0.8,-0.3);
    \coordinate (B) at (0,-0.5);
    \coordinate (D) at (-0.1,0.5);
    \coordinate (C) at (0.5,-0.1);
     \coordinate (X) at (0,0.5);
    \coordinate (Y) at (-0.1,0.5);
    \coordinate (Z) at (-0.1,-0.5);
    \coordinate (W) at (0,-0.5);
     \coordinate (x) at (-0.5,0.5);
    \coordinate (y) at (-0.4,0.5);
    \coordinate (z) at (-0.4,-0.5);
     \coordinate (w) at (-0.5,-0.5);

\matrix[column sep=0.8cm,row sep=0.5cm]
{
\bleft & node{$\partial ,$} &\tetra &\bright & node{$=$}&\tetraA &node[$-$]& \tetraD & node{$+$}& \tetraB& node{$-$}& \tetraC&  node[$.$]\\
};
\end{tikzpicture}
\end{equation*}
Since the left hand side lies in $[\partial, \End^{-2}(V)]$, the right hand side is also zero in the quotient. The idea of the proof is very simple: Equation (\ref{three simplex}) gives relations between the holonomies associated to the faces of a tetrahedron. By triangulating the cube and using this equation for the simplices of the triangulation one obtains relations between holonomies associated to the faces of the cube.
We now considered the following picture which represents a cube seen from above with the vertices numbered.
\begin{center}
\begin{tikzpicture}[scale=1]
    \coordinate (a) at (0.7,0.7);
    \coordinate (b) at (0.7,-0.7);
    \coordinate (c) at (-0.7,0.7);
    \coordinate (d) at (-0.7,-0.7);
    \coordinate (A) at (1.5,1.5);
    \coordinate (B) at (1.5,-1.5);
    \coordinate (C) at (-1.5,1.5);
    \coordinate (D) at (-1.5,-1.5);

\node(1)  at(2,1.8){4};
\node (2) at (2,-1.8){3};
\node (3) at (-1.4,1.8){2};
\node (4) at (-1.4,-1.8){1};
\node (5) at (1,1){8};
\node (6) at (1,-1){7};
\node (7) at (-0.3,1){6};
\node (8) at (-0.3,-1){5};
\matrix[column sep=0.8cm,row sep=0.5cm]
{
&\cubefromabove\\
};
\end{tikzpicture}
\end{center}

Suppose that $H: [0,1]^2 \rightarrow M$ is a homotopy between $\Gamma$ and $\Gamma'$ relative to the boundary. If $i,j,k,l \in \{ 1,2,\dots,8\}$ we let $I_{ijkl}: \Delta_3 \rightarrow [0,1]^2$ be the unique affine map that sends $v_0$ to $i$, $v_1$ to $j$ and so on. We also write $F_{ijkl}$ for the composition $H \circ I_{ijkl}$.
Similarly, we write $I_{ijk}$ and $I_{ij}$ for the unique affine maps from $\Delta_2$ ( respectively $\Delta_1$) to $[0,1]^3$ that sends $v_0$ to $i$, $v_1$ to $j$ and $v_2$ to $k$. Finally, we set:
\[ T_{ijk}:=\beta^2(H \circ I_{ijk}) \quad \textrm{and} \quad L_{ij}=\beta^1(H \circ I_{ij}).\]

With this notation at hand, the proof amounts to applying Equation  (\ref{three simplex}) to the simplices of a triangulation of the cube, using the fact that some of the simplices are thin (because the homotopies are relative to the boundary) and combining the resulting equations. The following computations take place in the quotient space \[\frac{\End^{-1}(V)}{[\partial, \End^2(V)]}.\]

By applying Equation (\ref{three simplex}) to the $3$-simplex $F_{5783}$ one obtains:
\[ L_{57}T_{783} - T_{583} + T_{573} - T_{578}L_{83} = 0.\]
The tetrahedra $ I_{573}$ and $ I_{783}$ lie on the boundary of the cube and since the homotopy
is relative to the boundary,  $ H \circ I_{573}$ and $H\circ I_{783}$ are thin. Therefore $T_{573}$ and $T_{783}$ vanish. Moreover, $L_{83}$ is the identity because it is the holonomies of a constant path. We conclude that
\begin{equation}\label{eqlemma1}
T_{578} + T_{583}=0.
\end{equation}
holds.

The same analysis can applied to the other tetrahedra. We sum up to corresponding relations
between the $2$-holonomies below:

\begin{center}
 \begin{tabular}{c | c}
  tetrahedron & relation \\
  \hline
  $(5783)$ & $T_{583} + T_{578} = 0$\\
  $(5834)$ &  $T_{534} - T_{584}+T_{583}= 0$\\
  $(5134)$ &  $T_{134}-T_{534}+T_{514}=0$\\ 
  $(2568)$ &  $T_{568} + T_{258}=0$\\
  $(1248)$ &  $T_{148} - T_{128} + T_{124} = 0$\\
  $(1258)$ &  $T_{258} - T_{158} + T_{128} =0$ \\ 
  $(1584)$ &  $T_{584} - T_{184} + T_{154} - T_{158} = 0$\\
  $(5154)$ & $T_{154} + T_{514} = 0$\\
  $(1848)$ & $T_{148} + T_{184} = 0$
 \end{tabular}

\end{center}

Replacing Equations (5154) and (1848) into Equation (1584) one obtains: 
\begin{equation}\label{eqlemma10}
 T_{158}+T_{514}-T_{148}-T_{584}=0.
\end{equation}

Adding Equations (5834)) and (5134) and substracting Equation (5783) one obtains: 
\begin{equation}\label{eqlemma11}
  T_{578}-T_{134}=T_{514}-T_{584}.
\end{equation}

Adding Equations (1248) and (1258) and substracting Equation (2568) one obtains: 
\begin{equation}\label{eqlemma12}
  T_{124}-T_{568}=T_{158}-T_{148}.
\end{equation}
Adding Equations (\ref{eqlemma11}) and (\ref{eqlemma12}) one obtains:
\begin{equation}\label{eqlemma13}
 T_{578}-T_{134}+  T_{124}-T_{568}=T_{158}-T_{148}+T_{514}-T_{584}.
\end{equation}
The right hand side of Equation (\ref{eqlemma13}) vanishes in view of Equation (\ref{eqlemma10}).
On the other hand, the left hand side is precisely
\[\beta^2(\Gamma'_a)-\beta^2(\Gamma'_b)- \beta^2(\Gamma_a)+\beta^2(\Gamma_b).\]
This completes the proof.
\end{proof}
\begin{proposition}\label{propT2}
Let $M$ be a smooth manifold, $(V,\partial)$ a cochain complex and \[\beta=\beta^1+ \beta^2+\dots,\] a well behaved representation up to homotopy of $\pi_\infty(M)$ in $(V,\partial)$. There is a representation  $\mathsf{T}_{\le 2}(\beta)$ of the fundamental $2$-groupoid of $M$ in the crossed module $\mathsf{GL}(V)$ given by: 
\begin{itemize}
\item If $\gamma$ is a path in $M$, we set
\[ (\mathsf{T}_{\le 2}\beta)(\gamma):= \beta^1(\gamma^{-1}).\]
\item If $\Gamma$ is a $2$-path in $M$, we set
\[ (\mathsf{T}_{\le 2}\beta)(\Gamma):= (\beta^2(\Gamma_b)- \beta^2(\Gamma_a))\beta^1({\Gamma_1}^{-1}) .\]
\end{itemize}

\end{proposition}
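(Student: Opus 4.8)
The plan is to verify, one by one, the five conditions in Definition \ref{2rep} that make $\mathsf{T}_{\le 2}(\beta)$ a representation of the fundamental $2$-groupoid of $M$ in $\mathsf{GL}(V)$. First one should check that the assignment actually lands in the crossed module: $(\mathsf{T}_{\le 2}\beta)(\gamma) = \beta^1(\gamma^{-1})$ is an automorphism of $(V,\partial)$ since $\beta^1$ is the (inverse of the) parallel transport of the flat connection $\beta^1$, hence invertible with inverse $\beta^1(\gamma)$ by the time-reversal identity in Remark \ref{remstruc}. For the $2$-path part, one must see that $\beta^2(\Gamma_b) - \beta^2(\Gamma_a)$ gives a well-defined class in $\mathsf{GL}^{-1}(V) = \End^{-1}(V)'/[\partial,\End^{-2}(V)]$; that it lies in $\End^{-1}(V)'$ will follow once condition (4) is checked, since $\delta$ then sends it to an invertible element of $\mathsf{GL}^0(V)$.

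Conditions (1) and (2) are the normalization conditions: if $\gamma$ is constant then $\gamma^{-1}$ is a degenerate (thin) $1$-simplex, so $\beta^1(\gamma^{-1}) = \id$ by the normalization conditions on $\beta$; and if $\Gamma$ is a constant $2$-path then both $\Gamma_a$ and $\Gamma_b$ are thin $2$-simplices, so by well-behavedness $\beta^2(\Gamma_a) = \beta^2(\Gamma_b) = 0$ and $\Gamma_1$ is constant, so $(\mathsf{T}_{\le 2}\beta)(\Gamma) = 0 = 1_E$. Condition (3), invariance under homotopy rel boundary, is exactly Lemma \ref{relativehomotopy} combined with the fact that $\Gamma_1 = \Gamma'_1$ for homotopies rel boundary: the lemma gives $\beta^2(\Gamma_b) - \beta^2(\Gamma_a) = \beta^2(\Gamma'_b) - \beta^2(\Gamma'_a)$ modulo $[\partial,\End^{-2}(V)]$, which is precisely equality in $\mathsf{GL}^{-1}(V)$, and multiplying by the common factor $\beta^1(\Gamma_1^{-1})$ preserves this.

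Condition (4), the source-target compatibility $\delta(\mathsf{Hol}(\Gamma)) = \mathsf{Hol}(\Gamma_0)^{-1}\mathsf{Hol}(\Gamma_1)$, is the computational heart. One applies $\delta(s) = [\partial,s] + \id$ to $s = (\beta^2(\Gamma_b) - \beta^2(\Gamma_a))\beta^1(\Gamma_1^{-1})$ and uses the structure equation \eqref{structure equations} for $n=2$, i.e. $[\partial,\beta^2(\sigma)] = -\beta^1(d_1\sigma) + \beta^1(d_0\sigma)\beta^1(d_2\sigma)$ (this is the displayed equation $[\partial,\beta^2(\sigma)] = -\beta^1(d_1\sigma) + (\beta^1\cup\beta^1)(\sigma)$ from Remark 2.12, read off the three faces of a $2$-simplex). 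For $\Gamma_b$ the faces are the paths $t\mapsto \Gamma(t,0)$, $t\mapsto\Gamma(t,t)$, $t\mapsto \Gamma(1,t)$ (the last constant since $\Gamma$ is constant on vertical sides), and for $\Gamma_a$ the faces are $t\mapsto\Gamma(t,t)$, $t\mapsto\Gamma(0,t)$ (constant), $t\mapsto\Gamma(1,t)$ — wait, one must be careful with the embeddings $a(t,s)=(s,t)$, $b(t,s)=(t,s)$ and track exactly which diagonal/edge paths appear. The key point is that $\Gamma_a$ and $\Gamma_b$ share the diagonal face $\Gamma_d(t) = \Gamma(t,t)$, so in the difference $[\partial,\beta^2(\Gamma_b)] - [\partial,\beta^2(\Gamma_a)]$ the contributions of $\beta^1(\Gamma_d)$ cancel, leaving only terms involving $\beta^1$ of the horizontal edges $\Gamma_0$, $\Gamma_1$ and of the (constant) vertical edges. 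Using that $\beta^1$ of a constant path is $\id$ and that $\beta^1(\Gamma_i^{-1}) = \beta^1(\Gamma_i)^{-1}$, one then simplifies $\delta(s) = [\partial,s] + \id$, keeping track of the extra $\beta^1(\Gamma_1^{-1})$-factor and the Leibniz-type term $s\,[\partial,\beta^1(\Gamma_1^{-1})]$ which vanishes because $\beta^1(\Gamma_1^{-1})$ is a chain map; the outcome should be $\beta^1(\Gamma_0^{-1})^{-1}\beta^1(\Gamma_1^{-1})$, matching $\mathsf{Hol}(\Gamma_0)^{-1}\mathsf{Hol}(\Gamma_1)$ under the identification $\mathsf{Hol}(\gamma) = \beta^1(\gamma^{-1})$.

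Condition (5), compatibility with the three composition operations, is where I expect the main obstacle, since the holonomies here come from a simplicial (not cubical) object and the group laws of $\mathsf{GL}(V)$ — the product $s\ast t = s + t + s[\partial,t]$ on $\mathsf{GL}^{-1}(V)$ and the whiskering action — must be matched with cutting-and-reparametrizing squares. For composition of paths $\gamma \ast \sigma$ one invokes the "compatible with concatenation" lemma (the well-behavedness lemma preceding §3) which gives $\beta^1((\gamma\ast\sigma)^{-1}) = \beta^1(\sigma^{-1})\beta^1(\gamma^{-1})$, matching the contravariance built into $\mathsf{T}_{\le 2}$. For horizontal and vertical composition of $2$-paths the strategy is: express the $2$-path obtained by composition as a $2$-path whose associated diagonal-triangle subdivision can be related, via thin $2$-simplices and applications of the $n=3$ structure equation \eqref{three simplex} (exactly as in Lemma \ref{relativehomotopy}), to the sum/whiskered-product of the pieces; here one genuinely needs the formulas $(L_e)_*(X) = X + e[\partial,X]$ and $\phi\rhd s = \phi s\phi^{-1}$ from the Remark following Lemma 3.12, and one needs the well-behavedness to kill the holonomies of the thin "collapsing" triangles that appear when one reparametrizes a composite square. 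The bookkeeping of signs and of which face maps $d_0,d_1,d_2$ produce which edge paths under the embeddings $a,b$ is the part most likely to require care; everything else reduces to the two lemmas already proved plus the explicit crossed-module formulas for $\mathsf{GL}(V)$.
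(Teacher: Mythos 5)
Your proposal is correct and follows essentially the same route as the paper: conditions (1)--(2) from the normalization/well-behavedness of $\beta$, condition (3) from Lemma \ref{relativehomotopy}, condition (4) by applying the $n=2$ structure equation to $\Gamma_a$ and $\Gamma_b$ and cancelling the common diagonal term $\beta^1(\Gamma_d)$, and condition (5) by subdividing the composed square, applying the $n=3$ structure equation to the resulting tetrahedra together with thinness of the degenerate pieces, and matching the outcome against the product $s\ast t=s+t+s[\partial,t]$ and the conjugation action on $\mathsf{GL}^{-1}(V)$. The only part you leave as a sketch is the explicit vertex-labelling and sign bookkeeping for the vertical and horizontal compositions, which the paper carries out in detail but which proceeds exactly as you describe.
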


\begin{proof}
Conditions $(1)$  and $(2)$ in Definition \ref{2rep} are direct consequences of the last statement in Remark \ref{remstruc}. The fact that condition $(3)$ is satisfied is precisely the content of Lemma \ref{relativehomotopy}.
Let us now prove that condition $(4)$ holds. That is, we need to prove that for any $2$-path $\Gamma$ the following equation holds:
\[ \delta (\mathsf{T}_{\le 2}\beta(\Gamma))= \mathsf{T}_{\le 2}\beta(\Gamma_0)^{-1} \mathsf{T}_{\le 2}\beta(\Gamma_1).\]

Specializing Equation (\ref{structure equations}) in Remark \ref{remstruc} to $n=2$ we obtain that:
\[ [\partial,\beta^2(\Gamma_{a})]=  \beta^1(\Gamma_1) -  \beta^1(\Gamma_d),\]
  
and also
\[ [\partial,\beta^2(\Gamma_{b})]= \beta^1(\Gamma_0) -  \beta^1(\Gamma_d). \]
Using these two equations, we compute:

\begin{eqnarray*}
 \delta (\mathsf{T}_{\le 2}\beta(\Gamma)) &=& [\partial,\beta^2(\Gamma_b)- \beta^2(\Gamma_a)] \beta^1 (\Gamma_1)^{-1}+\id\\
&=&(\beta^1(\Gamma_0) -  \beta^1(\Gamma_d)-\beta^1(\Gamma_1) +\beta^1(\Gamma_d))\beta^1(\Gamma_1)^{-1}+ \id\\
&=&\beta^1(\Gamma_0)\beta^1(\Gamma_1)^{-1}=\mathsf{T}_{\le 2}\beta(\Gamma_0)^{-1}\mathsf{T}_{\le 2}\beta(\Gamma_1).
\end{eqnarray*}

We need to prove that the assignment preserves composition. That it preserves composition of $1$-paths is a consequence of the fact that $\beta$ is well behaved.
So it remains to prove that the assignment preserves horizontal and vertical compositions of 
$2$-paths. In the rest of the proof we will use the same notation as in the proof of Lemma \ref{relativehomotopy}.
Let us begin with vertical composition. Consider two $2$-paths $\Gamma$ and $\Gamma'$ which are
vertically composable. We need to prove that if we subdivide the square horizontally as follows:

\begin{center}
\begin{tikzpicture}[scale=1.5]
    \coordinate (a) at (-0.7,0.7);
    \coordinate (b) at (-0.7,0);
    \coordinate (c) at (-0.7,-0.7);
    \coordinate (d) at (0.7,0.7);
     \coordinate (e) at (0.7,0);
    \coordinate (f) at (0.7,-0.7);
    \coordinate (Z) at (-0.1,-0.5);
    
\node(1)  at(-0.6,0.7){2};
\node (2) at (-0.6,0){4};
\node (3) at (-0.6,-0.7){6};
\node (4) at (1.15,0.7){1};
\node (5) at (1.15,0){3};
\node (6) at (1.15,-0.7){5};
\node (7) at (0.3,-0.35){e'};
\node (8) at (0.3,0.3){e};

\matrix[column sep=0.8cm,row sep=0.5cm]
{
&\splitsquare \\
};
\end{tikzpicture}
\end{center}

We set
\[ e:=(T_{431}-T_{421})L_{12}, \quad e':=(T_{653}-T_{643})L_{34}, \quad e'':=(T_{651}-T_{621})L_{12},\]

and claim that
\[ e''=e'\ast e\]
holds.

By applying Equation  (\ref{three simplex}) to the $3$-simplex $F_{6421}$ and using that $\beta$ is well behaved and that the vertical sides of the square are constant, one obtains:
\begin{equation}\label{vertical1}
T_{421}-T_{621}+T_{641}=0.
\end{equation}

Similarly, applying Equation  (\ref{three simplex}) to the $3$-simplices $F_{6531}$ and $F_{6431}$ one obtains:
\begin{equation}\label{vertical2}
T_{631}-T_{651}+T_{653}=0 \qquad \textrm{and} \qquad T_{431}-T_{631}+T_{641}-T_{643}=0.
\end{equation}

These equations combine to

\begin{equation}\label{vertical4}
-T_{421}+T_{653}-T_{643}+T_{431}=T_{651}-T_{621}.
\end{equation}

We now use Equation (\ref{vertical4}) to compute:

\begin{eqnarray*}
e' \ast e &=&e' + e +e'[\partial, e]= e'+e+e'(\delta(e)-\mathrm{id})=e+ e'(L_{43} L_{12})\\
&=&(T_{431}-T_{421})L_{12}+(T_{653}-T_{643})L_{12}\\
&=&(T_{431}-T_{421}+T_{653}-T_{643})L_{12}=(T_{651}-T_{621})L_{12}=e''.
\end{eqnarray*}
This completes the proof that the assignment preserves vertical composition.

It only remains to prove that horizontal composition is preserved. 
Consider two $2$-paths $\Gamma$ and $\Gamma'$ which are
horizontally composable. We need to prove that if we subdivide the square horizontally as follows

\begin{center}
\begin{tikzpicture}[scale=1.5]
    \coordinate (a) at (-0.8,0.7);
    \coordinate (b) at (-0.8,-0.7);
    \coordinate (c) at (0,0.7);
    \coordinate (d) at (0,-0.7);
    \coordinate (e) at (0.8,0.7);
    \coordinate (f) at (0.8,-0.7);

    \node(1) at (-0.5,0.9){5};
    \node(2) at (-0.5,-0.9){6};
    \node(3) at (0.3, 0.9){3};
    \node(4) at (0.3,-0.9){4};
    \node(6) at (1.1,0.9){1};
    \node(7) at (1.1,-0.9){2};  

    \node(8) at (-0.1,0){e};
    \node(9) at (0.7,0){e'};

\matrix[column sep=0.8cm,row sep=0.5cm]
{
&\vsplitsquare \\
};
\end{tikzpicture}
\end{center}

and set
\[ e:=(T_{643}-T_{653})L_{35}, \quad e':=(T_{421}-T_{431})L_{13}, \quad e'':=(T_{621}-T_{651})L_{15}, \quad h:=L_{46},\]

then
\[ e''=(h^{-1}\rhd e' ) \ast e\]
holds.

As before, we evaluate Equation (\ref{three simplex}) on the $3$-simplices
$F_{6421}$, $F_{6531}$ and $F_{6431}$ and obtain the following relations:
\begin{eqnarray}
\label{hor1}
&L_{64}T_{421} -T_{621} + T_{641}=0,& \\
 \label{hor2}
&T_{631}-T_{651}+T_{653}L_{31}=0,& \\
\label{hor3}
&L_{64}T_{431}-T_{631}+T_{641}-T_{643}L_{31}=0.&
\end{eqnarray}

Substracting Equations (\ref{hor2}) and (\ref{hor3}) from Equation (\ref{hor1}) one obtains:

\begin{equation}\label{hor4}
T_{643}L_{31}-L_{64}T_{431}-T_{621}+L_{64}T_{421}-T_{653}L_{31}+T_{651}=0
\end{equation}
Multiplying Equation (\ref{hor4}) on the right by $L_{15}$ one obtains:
\begin{equation}\label{hor5}
T_{643}L_{35}-L_{64}T_{431}L_{15}+L_{64}T_{421}L_{15}-T_{653}L_{35}=(T_{621}-T_{651})L_{15}
\end{equation}
We now use Equation (\ref{hor5}) to compute:

\begin{eqnarray*}
(h^{-1}\rhd e' ) \ast e&=&(h^{-1}e' h)\ast e= (h^{-1}e' h) + e + (h^{-1}e' h)[\partial,e]\\
&=& (h^{-1}e' h) + e + (h^{-1}e' h)(\delta(e)-\id)=e+  (h^{-1}e' h)\delta(e) \\
&=&(T_{643}-T_{653})L_{35}+L_{64}(T_{421}-T_{431})L_{13}L_{46}L_{64}L_{35}\\
&=&(T_{643}-T_{653})L_{35}+L_{64}(T_{421}-T_{431})L_{15}=(T_{621}-T_{651})L_{15}=e''.
\end{eqnarray*}

\end{proof}

\begin{definition} Let $M$ be a manifold and $(V,\partial)$ be a cochain complex. By the previous proposition
we have an assignment 
\[ \mathsf{T}_{\le 2}:  \mathrm{Rep}_\infty(\pi_\infty(M),V) \to \mathrm{Rep}(\pi_{\le 2}(M),\mathsf{GL}(V)), \quad \beta \mapsto \mathsf{T}_{\le 2}(\beta).\]
\end{definition}

\subsection{Comparing the holonomies}

In this section we combine our previous considerations and prove that by truncating the integration functor $\int$ for 
representations up to homotopy one obtains the holonomy construction for the fundamental $2$-groupoid from Theorem \ref{theoremHol}.
More precisely, we have the following:

\begin{theorem}\label{main}
Let $M$ be a smooth manifold and $(V,\partial)$ a cochain complex. Then the diagram
\[
\xymatrix{
\mathrm{Rep}_\infty(TM,V)\ar[rrr]^{\mathsf{\int}} \ar[d]_{\mathsf{T}_{\le 2}} &&& \mathrm{Rep}_\infty(\pi_\infty(M),V)\ar[d]^{\mathsf{T}_{\le 2}}\\
\mathsf{Flat}(M,\mathsf{gl}(V)) \ar[rrr]_{\mathsf{Hol}} &&&  \mathrm{Rep}(\pi_{\le 2}(M),\mathsf{GL}(V))}
\]
is commutative,
where $\mathsf{T}_{\le 2}$ denotes the $2$-truncations from \S \ref{subsection: conn = conn} and \S\ref{subsection: rep = rep}, $\int$ is the $A_\infty$ integration-functor 
for representations up to homotopy from Theorem \ref{theorem:integration_everything} and $\mathsf{Hol}$ is the holonomy construction from Theorem \ref{theoremHol}.
\end{theorem}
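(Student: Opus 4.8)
The plan is to establish commutativity of the square by unravelling both composite maps on an arbitrary flat superconnection $\alpha = \alpha^1 + \alpha^2 + \cdots$ with values in $(V,\partial)$ and checking that the resulting representations of $\pi_{\le 2}(M)$ agree separately on $1$-paths and on $2$-paths. For a $1$-path $\gamma$, the top-right composite gives $(\mathsf{T}_{\le 2}\circ \int)(\alpha)(\gamma) = \beta^1(\gamma^{-1})$, which by Remark 2.12 is the usual holonomy of $\alpha^1$ along $\gamma$; the bottom-left composite gives $\mathsf{Hol}_{\mathsf{T}_{\le 2}(\alpha)}(\gamma) = \mathsf{Hol}_{A}(\gamma)$ with $A = \alpha^1$, which by Theorem \ref{theoremHol} is exactly the same $1$-holonomy. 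So the $1$-dimensional part is immediate. The real content is the agreement on $2$-paths.

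For a $2$-path $\Gamma: I^2 \to M$, the bottom-left composite is computed by Lemma \ref{lemma:2_hol_linear}, which expresses $\mathsf{Hol}_{(A,B)}(\Gamma)$ as an explicit double integral over $I^2$ involving $B = -\pi\circ\alpha^2$, the parallel transports $g_{\Gamma_s}(t)$ of $A = \alpha^1$, and the $1$-holonomies $\mathsf{Hol}(\Gamma_s)$. The top-right composite is $(\beta^2(\Gamma_b) - \beta^2(\Gamma_a))\beta^1(\Gamma_1^{-1})$, where $\beta^2$ is given by the infinite sum from Remark 2.12 involving the folding maps $\Theta$, $\lambda$, $q$ and iterated integrals of $\sigma^*\alpha^1$ and $\sigma^*\alpha^2$. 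So I would proceed as follows. First, I would fix $\Gamma$ and set up both sides carefully, recalling that $\Gamma_a(t,s) = \Gamma(s,t)$, $\Gamma_b(t,s) = \Gamma(t,s)$, and that $\Gamma_b$ is the ``standard'' fold while $\Gamma_a$ is the transposed one. Second, I would analyze the iterated-integral formula for $\beta^2$ applied to $\Gamma_a$ and $\Gamma_b$: the key is that the folding construction interpolates between the diagonal path $\Gamma_d$ and the concatenation of the two edge paths, and the difference $\beta^2(\Gamma_b) - \beta^2(\Gamma_a)$ should collapse — modulo $[\partial,\End^{-2}(V)]$ is not needed here since we are in $\mathsf{GL}^{-1}$ but working with honest endomorphisms on the nose — to an integral of $\alpha^2$ over the square weighted by parallel transport of $\alpha^1$. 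The cleanest route is to differentiate both sides in a homotopy parameter: deform $\Gamma$ through $2$-paths and show that both $\mathsf{Hol}_{(A,B)}(\Gamma)$ and $(\beta^2(\Gamma_b)-\beta^2(\Gamma_a))\beta^1(\Gamma_1^{-1})$ satisfy the same first-order ODE in $s$ (the ``key observation'' ODE recalled in the remark after Definition \ref{definitionHol}, rewritten via Lemma \ref{lemma:2_hol_linear}), with the same initial condition at a degenerate $2$-path, where both sides reduce to the identity by strong unitality (Lemma 2.15, the well-behavedness of $\int\alpha$) and by the normalization $\beta^1(s_0(\cdot)) = \id$.

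Concretely, I expect the argument to run: (i) reduce to showing the two assignments agree on all $2$-paths by a standard ODE-uniqueness argument once we match the $s$-derivatives; (ii) compute $\frac{d}{ds}$ of the iterated-integral expression for $\beta^2(\Gamma_s$-restricted data$)$ — here one uses that $\Theta = q\circ\lambda$ sends the vertical direction of $I^2$ precisely to the $s$-variation of the fold, so Stokes' theorem on $\Delta_{m+n+1}\times I$ together with the flatness equations $[\partial,\alpha^2] + d\alpha^1 + \alpha^1\circ\alpha^1 = 0$ and $[\partial,\alpha^1] = 0$ produces the curvature term $F_A = d\alpha^1 + \tfrac12[\alpha^1,\alpha^1] = \delta B$ conjugated by $g_{\Gamma_s}(t)^{-1}$, exactly matching the ODE in the remark after Definition \ref{definitionHol}; (iii) match initial conditions. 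The main obstacle will be step (ii): carefully carrying out the Stokes/differentiation computation on the infinite iterated-integral formula for $\beta^2$, keeping track of signs, of the boundary faces of $\Delta_{m+n+1}\times I$ (those giving the $\alpha^1$-parallel-transport factors $g_{\Gamma_s}$ versus those giving the insertion of $[\partial,\cdot]$ which is then traded for $d\alpha^1 + \alpha^1\circ\alpha^1$ via flatness), and of the role of the folding map $\Theta$ in identifying the geometric $2$-path variation with the simplicial face structure. Once the $s$-derivative of $\beta^2(\Gamma_b) - \beta^2(\Gamma_a)$ is shown to equal $\int_0^1 g_{\Gamma_s}(t)^{-1}\rhd B(\partial_t\Gamma_s,\partial_s\Gamma_s)\,dt$ plus the curvature correction term that is absorbed into the $\beta^1(\Gamma_1^{-1})$ factor's variation, comparison with the ODE characterizing $h(s)$ in Theorem \ref{theoremHol} — already massaged into integral form in Lemma \ref{lemma:2_hol_linear} — finishes the proof. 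A cleaner alternative to a full Stokes computation, which I would try first, is to invoke Gugenheim's $A_\infty$-de Rham theorem structure directly: both the iterated-integral holonomy $\beta^2$ and the $2$-connection holonomy $\mathsf{Hol}$ are, up to the bookkeeping of the fold, the degree-reasons-unique $A_\infty$-transport of the same flat superconnection data truncated at form-degree $2$, so they must coincide; but making ``unique'' precise still requires the ODE-uniqueness argument above, so the two approaches converge to essentially the same proof.
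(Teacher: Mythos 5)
Your reduction of the theorem to the single identity
$$\beta^2(\Gamma_b)-\beta^2(\Gamma_a)=\int_0^1\!\!\int_0^1 g_{\Gamma_s}(t)^{-1}\rhd B\Bigl(\tfrac{\partial}{\partial t}\Gamma_s(t),\tfrac{\partial}{\partial s}\Gamma_s(t)\Bigr)\,\Hol_\alpha(\Gamma_s)^{-1}\,dt\,ds$$
is exactly the right first move and matches the paper, as does the treatment of $1$-paths. However, there are two genuine gaps in what follows. First, your parenthetical claim that one can work ``with honest endomorphisms on the nose'' rather than modulo $[\partial,\End^{-2}(V)]$ is false, and not innocently so: $B=-\pi\circ\alpha^2$ is only a class in the quotient $\mathsf{gl}^{-1}(V)$, and the comparison of the two sides in $\End^{-1}(V)$ produces leftover integrals of forms of the shape $[\partial,W]$, generated precisely by the flatness relation $d\alpha^1+\alpha^1\circ\alpha^1=-[\partial,\alpha^2]$ that your step (ii) invokes. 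Any version of your curvature-matching argument will create these $[\partial,\cdot]$ correction terms, so the identity can only be established in $\mathsf{GL}^{-1}(V)$, i.e. modulo $[\partial,\End^{-2}(V)]$; dropping the quotient makes the statement you are trying to prove false in general.

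Second, the core computation is left as a plan, and the plan has a structural problem with the $\Gamma_a$ term. An ODE-in-$s$ argument requires realizing both $\beta^2(\Gamma_b)$ and $\beta^2(\Gamma_a)$ as time-$1$ values of a flow in the vertical coordinate $s$; this is natural for $\Gamma_b$ (restrict $\Gamma$ to $I\times[0,s]$), but $\Gamma_a$ is the transposed fold, whose natural flow parameter is the horizontal coordinate $t$, so the difference does not satisfy a single first-order ODE in $s$ in any evident way. The paper circumvents this with a geometric trick you do not mention: using homotopy invariance (Lemma \ref{relativehomotopy} together with condition $(3)$ of Definition \ref{2rep}) to replace $\Gamma$ by a well-supported representative, concentrated in the lower right quarter of the square, for which $\beta^2(\Gamma_a)=0$ outright; the remaining comparison of $\beta^2(\Gamma_b)$ with the double integral is then carried out by constructing an explicit homotopy between the two parametrizations $\mu$ and $\lambda\circ f$ of the square and applying Stokes' theorem, with the interior terms controlled by the flatness equations. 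Your step (ii) is morally this same Stokes computation, but until it is actually executed --- with the $\Gamma_a$ term disposed of and the quotient by $[\partial,\End^{-2}(V)]$ reinstated --- the argument is not a proof.
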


Let $\alpha=\alpha^1+\alpha^2+\cdots$ be a flat superconnection on $M$ with values in $(V,\partial)$.
Denote the $\infty$-representation of $\pi_\infty(M)$ obtained by integrating $\alpha$
by $\beta=\beta^1 + \beta^2 + \cdots $.
Recall that the $2$-truncation $(A,B)$ of $\alpha$ is given by $A=\alpha^1$ and $B=-\pi\circ \alpha^2$,
where $\pi: \End^{-1}(V) \to \End^{-1}(V)/[\partial,\End^{-2}(V)]$ is the quotient map. 
\\

Let $\gamma$ be a path in $M$. By Remark 4.13. in \cite{AS}, $\beta^1(\gamma)$ equals the ordinary holonomy of $\alpha^1$ along the reversal of $\gamma$.
Hence $(\mathsf{T}_2\beta)(\gamma) = \beta^1(\gamma^{-1})$
is the ordinary holonomy of $\gamma$. By definition, this coincides with $\Hol_{\mathsf{T}_{\le 2}\alpha}(\gamma)$.
\\

Now let $\Gamma: I^2\to M$ be a $2$-path.
In Lemma \ref{lemma:2_hol_linear} we established the formula
$$\Hol_{\mathsf{T}_{\le 2}\alpha}(\Gamma) = \left(  \int_0^1 \int_0^1 g_{\Gamma_s}(t)^{-1}\rhd B\left(\frac{\partial}{\partial t} \Gamma_s(t), \frac{\partial}{\partial s}\Gamma_s(t)\right)  \mathsf{Hol}_\alpha(\Gamma_s)^{-1} dt ds \right) \mathsf{Hol}_\alpha(\Gamma_1).$$
for the $2$-holonomy of $(A,B)$ along $\Gamma$.
On the other hand, we defined
$$ (\mathsf{T}_{\le 2}\beta)(\Gamma) := (\beta^2(\Gamma_b) - \beta^2(\Gamma_a))\beta^1(\Gamma_1^{-1}).$$

Hence Theorem \ref{main} reduces to the identity
$$\beta^2(\Gamma_b) - \beta^2(\Gamma_a) =  \int_0^1 \int_0^1 g_{\Gamma_s}(t)^{-1}\rhd B\left(\frac{\partial}{\partial t} \Gamma_s(t), \frac{\partial}{\partial s}\Gamma_s(t)\right)  \mathsf{Hol}_\alpha(\Gamma_s)^{-1} dt ds. $$
To establish this, we start by rewriting the left-hand side in terms of iterated integrals.
We rely on the following standard facts about linear ODEs and iterated integrals:

\begin{lemma}
Let $\gamma: [0,1]\to M$ be a smooth path and $A \in \Omega(M,\mathfrak{gl}(V))$
the connection $1$-form of a trivial prinicipal $GL(V)$-bundle over $M$, where $V$ is a finite-dimensional vector space.

We define the holonomy $\Hol(\gamma)$ to be the time-1-solution to the differential equation
$$ \frac{d g_\gamma(t)}{dt} = - A_{\gamma(t)}\left(\frac{d \gamma(t)}{dt} \right) g_{\gamma}(t), \qquad g_\gamma(0)=1. $$ 
Let $a(t)$ be the $\mathsf{gl}(V)$-valued function on $[0,1]$ given by $\gamma^*A = a(t) dt.$
\begin{enumerate}
 \item The family $g_\gamma(t)$ can be represented by
 $$ g_\gamma(t) = \id + \sum_{n\ge 1}(-1)^n \int_{t\ge t_1\ge \cdots t_n \ge 0} a(t_1)  \cdots a(t_n) dt_1\cdots dt_n.$$
 \item The family $g_\gamma(t)^{-1}$ can be represented by
 $$ g_\gamma(t)^{-1} = \id + \sum_{n\ge 1}\int_{0 \le t_1 \le \cdots \le t_n \le t} a(t_1) \cdots a(t_n)dt_1\cdots dt_n.$$
 \item The family $g_{\gamma^{-1}}(t)$ can be represented by
 $$ g_{\gamma^{-1}}(t) = \id + \sum_{n\ge 1} \int_{t\ge t_1 \ge \cdots \ge t_n \ge 0} a(1-t_1)  \cdots a(1-t_n)dt_1\cdots dt_n.$$
\end{enumerate}
 
\end{lemma}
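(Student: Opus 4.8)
The plan is to derive all three formulas from the standard Picard iteration (Dyson expansion) for the linear ODE $\dot g_\gamma(t) = -a(t)\, g_\gamma(t)$, $g_\gamma(0) = \id$, where $a(t) := A_{\gamma(t)}(\dot\gamma(t))$ so that $\gamma^*A = a(t)\,dt$.

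For part (1), I would first rewrite the ODE as the integral equation $g_\gamma(t) = \id - \int_0^t a(\tau)\, g_\gamma(\tau)\, d\tau$ and iterate it: substituting the equation into itself repeatedly produces the partial sums $\id + \sum_{k=1}^{n} (-1)^k \int_{t \ge t_1 \ge \cdots \ge t_k \ge 0} a(t_1)\cdots a(t_k)\, dt_1\cdots dt_k$ together with an explicit remainder. Since $[0,1]$ is compact and $a$ continuous, $\|a\|_\infty < \infty$, and the operator norm of the $k$-th term is bounded by $\|a\|_\infty^k / k!$ (the volume of the $k$-simplex); hence the remainder goes to zero and the series converges absolutely and uniformly in $t$, as does the series obtained by differentiating in $t$. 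This justifies term-by-term differentiation, after which one checks directly that the sum solves the ODE, so it equals $g_\gamma$ by uniqueness.

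For part (2), differentiating the identity $g_\gamma(t)\, g_\gamma(t)^{-1} = \id$ shows that $h(t) := g_\gamma(t)^{-1}$ satisfies $\dot h(t) = h(t)\, a(t)$, $h(0) = \id$, i.e. the integral equation $h(t) = \id + \int_0^t h(\tau)\, a(\tau)\, d\tau$. Iterating this --- now appending the new factors on the right rather than on the left --- gives $h(t) = \id + \sum_{n \ge 1} \int_{0 \le \tau_n \le \cdots \le \tau_1 \le t} a(\tau_n)\cdots a(\tau_1)\, d\tau_1\cdots d\tau_n$, and the substitution $t_i := \tau_{n+1-i}$ converts this into the claimed ascending-simplex formula. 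One could instead multiply the two series directly and verify that the product is $\id$ by slicing the product of a descending and an ascending simplex into chambers, but that bookkeeping is heavier and I would avoid it.

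For part (3), writing $\gamma^{-1} = \gamma \circ r$ with $r(t) = 1 - t$ gives $(\gamma^{-1})^* A = r^*(a(t)\,dt) = -a(1-t)\, dt$, so the $\mathfrak{gl}(V)$-valued function attached to $\gamma^{-1}$ is $-a(1-t)$. Substituting this into the formula of part (1), the two factors of $(-1)^n$ cancel and one is left precisely with $\id + \sum_{n \ge 1} \int_{t \ge t_1 \ge \cdots \ge t_n \ge 0} a(1-t_1)\cdots a(1-t_n)\, dt_1\cdots dt_n$. Everything here is routine; the only points that need care are the convergence and uniformity estimates that license iterating and differentiating the series term by term, and keeping the order of the matrix factors (left versus right multiplication, ascending versus descending simplex) consistent through the relabellings --- so I expect part (2) to be where one must be most careful.
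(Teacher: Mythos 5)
Your proposal is correct. The paper does not actually prove this lemma --- it is explicitly introduced as a collection of ``standard facts about linear ODEs and iterated integrals'' and used without proof --- and your Picard/Dyson iteration argument, with the $t^n/n!$ volume bound for convergence, the transposed ODE $\dot h = h\,a$ for the inverse, and the reparametrization $t\mapsto 1-t$ for the reversed path, is precisely the standard justification. The only cosmetic refinement worth noting is in part (2): rather than differentiating $g_\gamma(t)\,g_\gamma(t)^{-1}=\id$ (which presupposes invertibility), it is slightly cleaner to \emph{define} $h$ as the solution of $\dot h = h\,a$, $h(0)=\id$, and observe that $\frac{d}{dt}(h g_\gamma) = h a g_\gamma - h a g_\gamma = 0$, which simultaneously proves invertibility and identifies $h=g_\gamma^{-1}$.
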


This immediately yields a representation of  $$\int_0^1 \int_0^1 g_{\Gamma_s}(t)^{-1}\rhd B\left(\frac{\partial}{\partial t} \Gamma_s(t), \frac{\partial}{\partial s}\Gamma_s(t)\right)  \mathsf{Hol}_\alpha(\Gamma_s)^{-1}dt ds $$
in terms of iterated integrals:

\begin{lemma}\label{lemmaintegral}
 Let $\Gamma: I^2 \to M$ be a $2$-paths and $(A,B)$ a $2$-connection on $M$ with values in $\mathsf{gl}(V)$,
 $V$ a cochain complex.
Define $a_s(t)$ to be the $\mathsf{gl}(V)^0$-valued function on $I^2$ given by $A_{\Gamma(t,s)}(\frac{\partial \Gamma}{\partial t})$. Moreover, let $b_s(t)$ be the $\mathsf{gl}(V)^{-1}$-valued function on $I^2$
given by $B_{\Gamma(t,s)}\left(\frac{\partial}{\partial t} \Gamma_s(t), \frac{\partial}{\partial s}\Gamma_s(t) \right)$.

The integral
$$ \int_0^1 \int_0^1 g_{\Gamma_s}(t)^{-1}\rhd B\left(\frac{\partial}{\partial t} \Gamma_s(t), \frac{\partial}{\partial s}\Gamma_s(t) \right)  \mathsf{Hol}_\alpha(\Gamma_s)^{-1} dt ds$$
can be represented by 
$$
Z(\Gamma):=\sum_{m,n\ge 0} \int_{\Delta_{m+n+1} \times I} a_s(1-t_1)\cdots a_s(1-t_m) b_s(1-t_{m+1}) a_s(1-t_{m+2}) \cdots a_s(1-t_{m+n+1}) dt_1 \cdots dt_{m+n+1} ds.
$$
\end{lemma}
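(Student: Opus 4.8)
The plan is to expand the two holonomy factors $g_{\Gamma_s}(t)^{-1}$ and $\Hol_\alpha(\Gamma_s)^{-1}$ that occur in the integrand into iterated integrals by means of the preceding lemma, to multiply the resulting series out (with the $B$-term sandwiched in between), and finally to reorganize the integration domains into copies of $\Delta_{m+n+1}$.

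First I would record two elementary observations. For the crossed module $\mathsf{GL}(V)$, right translation on $\mathsf{GL}^0(V)$ is $X\mapsto Xg$ and the action $\rhd$ is conjugation, so the defining ODE of $\Hol_\alpha$ along a path from Theorem \ref{theoremHol} is exactly the one in the preceding lemma; hence $\Hol_\alpha(\Gamma_s)=g_{\Gamma_s}(1)$ and the integrand of the double integral is
\[
 g_{\Gamma_s}(t)^{-1}\rhd B\Big(\tfrac{\partial}{\partial t}\Gamma_s,\tfrac{\partial}{\partial s}\Gamma_s\Big)\,\Hol_\alpha(\Gamma_s)^{-1}
 \;=\; g_{\Gamma_s}(t)^{-1}\,b_s(t)\,\big(g_{\Gamma_s}(t)\,g_{\Gamma_s}(1)^{-1}\big).
\]
The factor $W(t,s):=g_{\Gamma_s}(t)g_{\Gamma_s}(1)^{-1}$ solves, as a function of $t$, the terminal-value problem $\partial_t W=-a_s(t)W$, $W(1,s)=\id$, i.e.\ $W(t,s)=\id+\int_t^1 a_s(v)W(v,s)\,dv$; iterating this integral equation produces the absolutely convergent expansion
\[
 W(t,s)=\id+\sum_{n\ge 1}\int_{t\le v_1\le\cdots\le v_n\le 1} a_s(v_1)\cdots a_s(v_n)\,dv_1\cdots dv_n ,
\]
which is part (2) of the preceding lemma with the interval $[0,t]$ replaced by $[t,1]$; and part (2) itself gives $g_{\Gamma_s}(t)^{-1}=\id+\sum_{m\ge 1}\int_{0\le u_1\le\cdots\le u_m\le t} a_s(u_1)\cdots a_s(u_m)\,du_1\cdots du_m$.

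The main part of the argument is then purely combinatorial bookkeeping. Multiplying these two series with $b_s(t)$ inserted in the middle, the contribution of the pair $(m,n)$ to $g_{\Gamma_s}(t)^{-1}b_s(t)W(t,s)$ is the integral of $a_s(u_1)\cdots a_s(u_m)\,b_s(t)\,a_s(v_1)\cdots a_s(v_n)$ over $\{0\le u_1\le\cdots\le u_m\le t\}\times\{t\le v_1\le\cdots\le v_n\le 1\}$. Integrating over $(t,s)\in I^2$ merges $u_1,\dots,u_m,t,v_1,\dots,v_n$ into a single tuple ranging over the ordered simplex $\{0\le w_1\le\cdots\le w_{m+n+1}\le 1\}$; the substitution $w_i=1-t_i$ identifies this region with $\Delta_{m+n+1}=\{1\ge t_1\ge\cdots\ge t_{m+n+1}\ge 0\}$ (the Jacobian has absolute value $1$, so no sign appears) and turns the integrand into $a_s(1-t_1)\cdots a_s(1-t_m)\,b_s(1-t_{m+1})\,a_s(1-t_{m+2})\cdots a_s(1-t_{m+n+1})$. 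Summing over $m,n\ge 0$ yields exactly $Z(\Gamma)$.

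I expect the only points requiring care, and hence the main obstacle, to be two matters of rigour. The first is convergence: the Dyson-type series for $g_{\Gamma_s}(t)^{-1}$ and $W(t,s)$ converge absolutely and uniformly in $(t,s)$ (the $k$-th term is bounded by $C^k/k!$ for a constant $C$ depending only on a sup-bound for $a_s$), which is what licenses the term-by-term multiplication and the interchange of summation with the integrations over $t$ and $s$; this is the same estimate that makes the explicit formula for $\beta^2$ recalled earlier well posed. The second is that the products $a_s\cdots a_s\,b_s\,a_s\cdots a_s$ a priori only make sense in $\End(V)$, while $b_s$ takes values in the quotient $\mathsf{gl}^{-1}(V)=\End^{-1}(V)/[\partial,\End^{-2}(V)]$; but $a_s$ is valued in degree-zero cochain maps, i.e.\ endomorphisms commuting with $\partial$, so left and right multiplication by $a_s$ preserves $[\partial,\End^{-2}(V)]$, and therefore these products descend to $\mathsf{gl}^{-1}(V)$, where the claimed identity holds.
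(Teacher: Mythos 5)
Your proposal is correct and follows essentially the same route as the paper: expand the holonomy factors into Dyson/iterated-integral series via the preceding lemma, merge the ordered integration domains with $b_s(t)$ in the middle slot, and apply the change of variables $t_i\mapsto 1-t_i$ to pass from the standard simplex to $\Delta_{m+n+1}$. You simply make explicit what the paper leaves as a ``straightforward consequence'' -- in particular your expansion of $W(t,s)=g_{\Gamma_s}(t)g_{\Gamma_s}(1)^{-1}$ is exactly part (3) of the preceding lemma evaluated at $1-t$, and your remarks on convergence and on why the products descend to the quotient $\mathsf{gl}^{-1}(V)$ are correct.
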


\begin{proof}
As a straightforward consequence of the previous lemma,
we obtain that
$$ \int_0^1 \int_0^1 g_{\Gamma_s}(t)^{-1}\rhd B\left(\frac{\partial}{\partial t} \Gamma_s(t), \frac{\partial}{\partial s}\Gamma_s(t)\right)  \mathsf{Hol}_\alpha(\Gamma_s)^{-1} dt ds$$
can be represented by 
$$
\sum_{m,n\ge 0} \int_{\tilde{\Delta}_{m+n+1} \times I} a_s(t_1)\cdots a_s(t_m) b_s(t_{m+1}) a_s(t_{m+2}) \cdots a_s(t_{m+n+1}) dt_1 \cdots dt_{m+n+1} ds.
$$
where $\tilde{\Delta}_k$ denotes the {\em standard} simplex of dimension $k$, i.e.
$\tilde{\Delta}_k := \{0\le t_1\le \cdots \le t_k \le 1\}$.
Then one applies the diffeomorphism
$$ \Delta_k \to \tilde{\Delta}_k, \quad (t_1,\dots,t_k)\mapsto (1-t_1,\dots,1-t_k).$$
\end{proof}

Applying Lemma \ref{lemmaintegral} to a 2-connection, which is the $2$-truncation of a flat superconnection $\alpha$, yields

\begin{proposition}
Let $\alpha$ be a flat superconnection on $M$ with values in $(V,\partial)$.
Denote the corresponding flat $2$-connection with values in the differential Lie crossed module $\mathsf{GL}(V)$
by $(A,B)$. The integral
$$\int_0^1 \int_0^1 g_{\Gamma_s}(t)^{-1}\rhd B\left(\frac{\partial}{\partial t} \Gamma_s(t), \frac{\partial}{\partial s}\Gamma_s(t)\right)  \mathsf{Hol}_\alpha(\Gamma_s)^{-1} dt ds$$
can be written as
$$\sum_{m,n\ge 0}(-1)^{m+n} \int_{\Delta_{m+n+1} \times I} \mu_{(m+n+1)}^*( p_1^*\Gamma^*\alpha^1
\wedge \cdots \wedge p_m^*\Gamma^*\alpha^1 \wedge p_{m+1}^*\Gamma^*\alpha^2 \wedge p_{m+2}^*\Gamma^*\alpha^1 \wedge \cdots \wedge p_{m+n+1}^*\Gamma^*\alpha^1),$$
where $\mu_{(k)}$ is the map
$$ \mu_{(k)}: \Delta_k \times I \to (I^2)^{\times k}, \quad (t_1,\dots,t_k,s) \mapsto ((1-t_1,s),\dots,(1-t_k,s)).$$
\end{proposition}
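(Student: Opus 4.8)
The plan is to obtain the identity as an essentially formal consequence of Lemma \ref{lemmaintegral}, which already rewrites the left-hand side as the iterated integral $Z(\Gamma)$; the remaining task is only to recognise $Z(\Gamma)$, term by term, as the asserted sum of pullbacks. Since $(A,B)=\mathsf{T}_{\le 2}(\alpha)$ means $A=\alpha^1$ and $B=-\pi\circ\alpha^2$, the two auxiliary functions of Lemma \ref{lemmaintegral} acquire a concrete meaning: writing $(t,s)$ for the coordinates on $I^2$, we have $\Gamma^*\alpha^1 = a_s(t)\,dt + (\cdots)\,ds$ with $a_s(t)=A_{\Gamma(t,s)}(\partial_t\Gamma)$ the function appearing in Lemma \ref{lemmaintegral}, and $\Gamma^*\alpha^2 = c_s(t)\,dt\wedge ds$ with $c_s(t)=\alpha^2_{\Gamma(t,s)}(\partial_t\Gamma,\partial_s\Gamma)$, so that $b_s(t)=-\pi(c_s(t))$. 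Hence it suffices to match, for each $m,n\ge 0$ and with $k:=m+n+1$, the pullback $\mu_{(k)}^*\bigl(p_1^*\Gamma^*\alpha^1\wedge\cdots\wedge p_m^*\Gamma^*\alpha^1\wedge p_{m+1}^*\Gamma^*\alpha^2\wedge p_{m+2}^*\Gamma^*\alpha^1\wedge\cdots\wedge p_{m+n+1}^*\Gamma^*\alpha^1\bigr)$ on $\Delta_k\times I$ with the integrand of the $(m,n)$-summand of $Z(\Gamma)$, up to the prefactor $(-1)^{m+n}$.

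First I would compute the pullback. The map $p_i\circ\mu_{(k)}\colon\Delta_k\times I\to I^2$ is $(t_1,\dots,t_k,s)\mapsto(1-t_i,s)$, so it sends $dt\mapsto -dt_i$ and $ds\mapsto ds$; therefore $(p_i\circ\mu_{(k)})^*\Gamma^*\alpha^1 = -a_s(1-t_i)\,dt_i + (\cdots)\,ds$ for $i\ne m+1$, while $(p_{m+1}\circ\mu_{(k)})^*\Gamma^*\alpha^2 = -c_s(1-t_{m+1})\,dt_{m+1}\wedge ds$. The key point is that the single factor coming from $\alpha^2$ already carries the unique $ds$-direction available on $\Delta_k\times I$, so by $ds\wedge ds=0$ every $\alpha^1$-factor is forced to contribute only its $dt_i$-part, and all the $ds$-parts drop out. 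It then remains to collect signs: the $k$ overall minus signs give $(-1)^{m+n+1}$; the Koszul rule in $\Omega(\Delta_k\times I)\otimes\End(V)$ produces $(-1)^n$ when the degree $-1$ endomorphism part of the $\alpha^2$-factor is moved past the $n$ subsequent one-forms; and reordering $dt_1\wedge\cdots\wedge dt_{m+1}\wedge ds\wedge dt_{m+2}\wedge\cdots\wedge dt_k$ into the standard orientation $dt_1\wedge\cdots\wedge dt_k\wedge ds$ of $\Delta_k\times I$ contributes another $(-1)^n$. The last two cancel, so the pullback equals $(-1)^{m+n+1}\,a_s(1-t_1)\cdots a_s(1-t_m)\,c_s(1-t_{m+1})\,a_s(1-t_{m+2})\cdots a_s(1-t_k)$ times the standard volume form.

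Multiplying by $(-1)^{m+n}$ and summing over $m,n$, the proposed expression becomes $-\sum_{m,n\ge 0}\int_{\Delta_{m+n+1}\times I} a_s(1-t_1)\cdots a_s(1-t_m)\,c_s(1-t_{m+1})\,a_s(1-t_{m+2})\cdots a_s(1-t_{m+n+1})\,dt_1\cdots dt_{m+n+1}\,ds$, which is exactly $Z(\Gamma)$ once one substitutes $b_s=-\pi\circ c_s$ and reads the identity in $\mathsf{gl}^{-1}(V)=\End^{-1}(V)/[\partial,\End^{-2}(V)]$, the module in which $B$ — and hence the left-hand side of the proposition — takes its values. Here one uses that $\pi$ commutes with left and right multiplication by the chain maps $a_s$ (because $[\partial,a_s\,h\,a_s']=a_s\,[\partial,h]\,a_s'$ for $h$ of even degree $-2$), so that applying $\pi$ to the product on the right amounts to inserting $b_s$ in place of $c_s$. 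I do not expect a genuine obstacle here: the content is already in Lemma \ref{lemmaintegral} together with the geometry of $\mu_{(k)}$, and the only real work — which is purely bookkeeping — is the sign tracking above and the observation that only the $dt$-components of the $\alpha^1$-factors survive in the wedge product. As a consistency check, the resulting formula is visibly the explicit integral expression for $\beta^2$ recalled earlier, with the fold $\Theta$ replaced by the degenerate map $\mu_{(k)}$ and the overall sign flipped, which is precisely what the final comparison requires.
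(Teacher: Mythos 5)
Your proposal is correct and follows exactly the route the paper intends: the paper states this proposition as an immediate corollary of Lemma \ref{lemmaintegral} (substituting $A=\alpha^1$, $B=-\pi\circ\alpha^2$ and recognising the iterated integral as a pullback along $\mu_{(k)}$), and your sign bookkeeping — the $(-1)^{m+n+1}$ from the $k$ reflections $dt\mapsto -dt_i$, and the cancelling pair of $(-1)^n$'s from the Koszul rule and the reordering of $ds$ — checks out. The only detail worth keeping explicit is the one you already flag, namely that $[\partial,\End^{-2}(V)]$ is stable under left and right multiplication by chain maps, so the identification is legitimate in the quotient $\mathsf{gl}^{-1}(V)$.
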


Recall from Subsection \ref{subsection: hol for superconnections} that $\beta^2(\sigma)$ of a $2$-simplex $\sigma$ is given by
$$\sum_{m,n\ge 0} (-1)^{m+n+1} \int_{\Delta_{m+n+1}\times I} \Theta_{(m+n+1)}^*(p_1^*\sigma^*\alpha^1 \wedge \cdots
\wedge p_m^*\sigma^*\alpha^1 \wedge p_{m+1}^*\sigma^*\alpha^2 \wedge p_{m+2}^*\sigma^*\alpha^1\wedge
\cdots \wedge p_{m+n+1}^*\sigma^*\alpha^1).$$
We want to apply this to $\Gamma_b$ and $\Gamma_a$, where
$\Gamma$ is a $2$-path and $a$ and $b$ denote the maps from the $2$-simplex $\Delta_2$ to the square $I^2$
given by $b(t,s)=(t,s)$ and $a(t,s)=(s,t)$.

To simplify the computations, we make use of the homotopy invariance of $\Hol_{(A,B)}(\Gamma)$
and $\mathsf{T}_{\le 2}(\beta)$, see Theorem \ref{theoremHol} and Proposition \ref{propT2}, respectively.
We replace $\Gamma$ by a $2$-path which is obtained by shrinking the square and moving it into its lower right quarter.
In more detail, we use an approprate isotopy $\phi_\tau$ of $\mathbb{R}^2$ to push $I^2$ into its own interior
in such a way that, at time $\tau=1$, $\phi_1(I^2)$ is contained in the (interior of the) lower right quarter of $I^2$.
Denote the image of $I^2$ under $\phi_\tau$ by $Q_\tau$.
For each $\tau$, replace $\Gamma$ with $\tilde{\Gamma}_\tau$,
where $\tilde{\Gamma}_\tau := \Gamma \circ \phi_\tau^{-1}$ on $Q_\tau$ and extend $\tilde{\Gamma}_\tau$
to all of $I^2$ be letting it be constant along the flow lines from $\partial I^2$ to $\partial Q_\tau$.
It is straight forward to check that this can be achieved such that $\tilde{\Gamma}_\tau$ is a homotopy
relative endpoints.\footnote{One can further smooth $\tilde{\Gamma}_\tau$ so as to stay within the smooth category.}
A schematic picture of the situation looks like this:

\tikzset{->-/.style={decoration={
  markings,
  mark=at position 0.5 with {\arrow{>}}},postaction={decorate}}}

\begin{center}
\begin{tikzpicture}[scale=1]
\draw[ultra thick] (0,0) rectangle (6,6);
\draw[ultra thick,fill=gray] (4,1) rectangle (5,2);
\draw[thin,fill=gray, opacity=0.2] (0,6) -- (4,2) -- (4,1) -- (0,0) -- cycle (5,2) -- (6,6) -- (6,0) -- (5,1) -- cycle;
\draw[very thick] (0,6) -- (4,2) (0,0) -- (4,1) (5,2) -- (6,6) (5,1) -- (6,0);
\draw[->-] (0,4) -- (4,1.67);
\draw[->-] (0,2) -- (4,1.33);
\draw[->-] ((6,2)--(5,1.33);
\draw[->-] (6,4)--(5,1.67);
\draw[->-] (2,6) -- (4.33,2);
\draw[->-] (4,6) -- (4.67,2);
\draw[->-] (2,0) -- (4.33,1);
\draw[->-] (4,0) -- (4.67,1);
\end{tikzpicture}
\end{center}

We now redefine $\Gamma$ to be the $2$-path $\tilde{\Gamma}_1$. Observe that for this $2$-path, the rank has the following properties:
\begin{itemize}
\item it can be $2$ only inside the small square,
\item it is zero inside the shaded regions to the left and to the right of the small square.
\end{itemize}
We call a $2$-path with such a behaviour {\em well-supported}.
It easily follows that $\beta^2(\Gamma_a)$ vanishes for every well-supported $2$-path.

\begin{proposition}
Let $\Gamma$ be a well-supported $2$-path in $M$ and $\alpha = \alpha^1 + \alpha^2 + \cdots$ a flat superconnection
on $M$ with values in $(V,\partial)$.
The two elements of $\mathsf{GL}^{-1}(V)$ represented by
$$ Z(\Gamma) = \sum_{m,n\ge 0} (-1)^{m+n}\int_{\Delta_{m+n+1} \times I} \mu_{(m+n+1)}^{*}\Omega_{m,n}$$
and 
$$ X(\Gamma) = \sum_{m,n\ge 0} (-1)^{m+n+1}\int_{\Delta_{m+n+1} \times I} \Theta_{(m+n+1)}^*(b^{\times {m+n+1}})^* \Omega_{m,n}$$
coincide, where
$$ \Omega_{m,n} = p_1^*\Gamma^*\alpha^1 \wedge \cdots
\wedge p_m^*\Gamma^*\alpha^1 \wedge p_{m+1}^*\Gamma^*\alpha^2 \wedge p_{m+2}^*\Gamma^*\alpha^2\wedge
\cdots \wedge p_{m+n+1}^*\Gamma^*\alpha^1.$$
\end{proposition}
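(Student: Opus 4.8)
The plan is to first recognise $X(\Gamma)$ and $Z(\Gamma)$ as iterated integrals of the \emph{same} forms $\Omega_{m,n}$ on $(I^{2})^{\times(m+n+1)}$ along two different families of reparametrisations. Using $\Gamma_{b}^{*}\alpha^{i}=b^{*}\Gamma^{*}\alpha^{i}$ together with $b\circ p_{j}=p_{j}\circ b^{\times k}$, one identifies $X(\Gamma)$ with $\beta^{2}(\Gamma_{b})$, and one rewrites both $X(\Gamma)$ and $Z(\Gamma)$ in the shape $\sum_{m,n\ge 0}(\pm 1)\int_{\Delta_{m+n+1}\times I}\rho_{(m+n+1)}^{*}\Omega_{m,n}$, where $\rho_{(k)}(t_{1},\dots,t_{k},s)=(\rho(t_{1},s),\dots,\rho(t_{k},s))$ for a single map $\rho\colon I^{2}\to I^{2}$: for $Z(\Gamma)$ one takes $\rho=\nu$, $\nu(t,s):=(1-t,s)$, and for $X(\Gamma)$ one takes $\rho=\Psi:=b\circ q\circ\lambda$, the two series carrying opposite overall signs. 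A quick inspection of the leading term $m=n=0$ is instructive: it equals $\int_{I^{2}}\nu^{*}\Gamma^{*}\alpha^{2}=-\int_{I^{2}}\Gamma^{*}\alpha^{2}$ on the $Z$-side and $-\int_{I^{2}}\Psi^{*}\Gamma^{*}\alpha^{2}=-\int_{b(\Delta_{2})}\Gamma^{*}\alpha^{2}$ on the $X$-side, and these coincide because $\Gamma$ is well-supported, so $\Gamma^{*}\alpha^{2}$ is supported in the small square, which lies in $b(\Delta_{2})=\{x\ge y\}$. This is already a concrete instance of where the hypothesis enters.

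For the general term I would run a Stokes-type argument of the kind used to verify the structure equations for $\int\alpha$ in \cite{AS}. Pick a piecewise-smooth homotopy $\Xi\colon I^{2}\times[0,1]\to I^{2}$ with $\Xi(\cdot,\cdot,0)=\nu$ and $\Xi(\cdot,\cdot,1)=\Psi$, set $\Xi_{(k)}(t_{1},\dots,t_{k},s,r):=(\Xi(t_{1},s,r),\dots,\Xi(t_{k},s,r))$, and apply Stokes to $\Xi_{(k)}^{*}\Omega_{m,n}$ on $\Delta_{m+n+1}\times I\times[0,1]$, then sum over $m,n\ge 0$ with the signs $(-1)^{m+n}$. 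On the left-hand side one expands $d\Omega_{m,n}$ using $d(p_{j}^{*}\Gamma^{*}\alpha^{i})=p_{j}^{*}\Gamma^{*}(d\alpha^{i})$ and the flatness equations $d\alpha^{i}=-[\partial,\alpha^{i+1}]-\sum_{p+q=i+1}\alpha^{p}\circ\alpha^{q}$ satisfied by the superconnection $\alpha$. The contributions carrying a $[\partial,\alpha^{i+1}]$, after commuting $\partial$ past the chain maps $\alpha^{1}$ and using $\partial^{2}=0$, assemble into elements of $[\partial,\End^{-2}(V)]$ and hence vanish in $\mathsf{GL}^{-1}(V)$; the contributions $\alpha^{p}\circ\alpha^{q}$, which collapse two neighbouring factors, cancel term by term against the contributions of the inner faces $\{t_{i}=t_{i+1}\}$ of the simplices $\Delta_{m+n+1}$ --- exactly the telescoping already present in the construction of $\int\alpha$, and the reason the signs $(-1)^{m+n}$ appear. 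On the right-hand side, the two faces $r=0$ and $r=1$ give back $Z(\Gamma)$ and $X(\Gamma)$, the opposite overall signs in their definitions being compensated by the orientation sign of $\partial[0,1]$. Modulo the other boundary faces, Stokes then yields $Z(\Gamma)=X(\Gamma)$ in $\mathsf{GL}^{-1}(V)$.

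The remaining boundary faces are the outer simplex faces $\{t_{1}=1\}$ and $\{t_{m+n+1}=0\}$ and the faces $\{s=0\}$ and $\{s=1\}$, and showing that they contribute nothing is the heart of the matter; this is the point at which well-supportedness is indispensable. One arranges $\Xi$ so that $\Xi(1,\cdot,\cdot)$ stays on the left edge and $\Xi(0,\cdot,\cdot)$ stays on the right edge of $I^{2}$, where the well-supported $\Gamma$ is constant, which makes the first, respectively the last, wedge factor pull back to zero on the corresponding outer face; and one arranges $\Xi$ so that $\Xi(\cdot,0,\cdot)$ and $\Xi(\cdot,1,\cdot)$ avoid the small square and remain where $\Gamma$ has rank $\le 1$, so that the unique $2$-form factor $\Gamma^{*}\alpha^{2}$ of $\Omega_{m,n}$ pulls back to zero on the faces $s=0$ and $s=1$. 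The existence of such an $\Xi$ is precisely what the shape of a well-supported $2$-path --- pushed into the interior of the lower right quarter with the prescribed rank behaviour --- guarantees. The main obstacle I anticipate is not a single step but the simultaneous control of all the orientation signs in the telescoping cancellation of the second paragraph, which forces one to unpack the construction of $\int\alpha$ with some care; as a routine but necessary accompaniment one must also check that the double series over $(m,n)$ may be rearranged and integrated termwise, which follows from the absolute-convergence estimates already underlying the definition of $\beta^{2}(\sigma)$.
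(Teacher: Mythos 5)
Your overall strategy --- rewriting both series as iterated integrals of $\Omega_{m,n}$ along two parametrising maps, joining these by a homotopy, applying Stokes, using flatness for the $d$-terms and telescoping against the inner simplex faces --- is the same as the paper's, and your identification $X(\Gamma)=\beta^2(\Gamma_b)$ and your $m=n=0$ check are correct. The gap is in the faces $s=0$ and $s=1$: the homotopy $\Xi$ you postulate does not exist. At $s=0$ the two end-maps are $\nu(\cdot,0)$, which traces the bottom edge of $I^2$, and $\Psi(\cdot,0)=r(\lambda(\cdot,0))$, which traces the diagonal (the fold $r=b\circ q$ sends the left edge to the diagonal via $(0,y)\mapsto(y,y)$). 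Together with the endpoint tracks $\Xi(0,0,\cdot)$ (forced onto the right edge, running from $\nu(0,0)=(1,0)$ to $\Psi(0,0)=(1,1)$) and $\Xi(1,0,\cdot)$ (on the left edge at $(0,0)$), the boundary of the domain of $\Xi(\cdot,0,\cdot)$ maps to a loop winding once around the lower-right triangle $\{t\ge s\}$, hence once around the small square. So the image of $\Xi(\cdot,0,\cdot)$ must cover the small square: it can neither avoid the support of $\Gamma^*\alpha^2$ nor have rank $\le 1$, and the $s=0$ face already contributes $\pm\int_{I^2}\Gamma^*\alpha^2\ne 0$ for $m=n=0$. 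Your own leading-term computation exposes the inconsistency: since $\deg\nu=-1$ while $\deg\Psi=+1$, the leading terms are equal \emph{with the same sign}, $Z_{0,0}=X_{0,0}=-\int_{I^2}\Gamma^*\alpha^2$, so the faces $\tau=0$ and $\tau=1$ of your Stokes identity produce (up to an overall sign) $Z_{0,0}+X_{0,0}$ rather than $Z_{0,0}-X_{0,0}$; the relative sign between the two series is not ``compensated by the orientation of $\partial[0,1]$'' but by the opposite degrees of $\nu$ and $\Psi$. Were the $s$-faces really negligible, Stokes would prove $0=2\int_{I^2}\Gamma^*\alpha^2$, which is false.

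The paper circumvents this with two moves your argument omits. First, well-supportedness is spent on replacing $\Omega_{m,n}$ by $(r^{\times(m+n+1)})^*\Omega_{m,n}$ inside $Z(\Gamma)$ as well (legitimate because wherever the integrand of $Z$ is nonzero all components lie in $b(\Delta_2)$, on which $r=\mathrm{id}$), so that both series integrate the \emph{same} folded form, one along $\mu$ and one along $\lambda$. Second, $\lambda$ is precomposed with the flip $f(t,s)=(t,1-s)$, which matches degrees, $\deg(\lambda\circ f)=-1=\deg\mu$, and is exactly what absorbs the extra $(-1)$ between the two series. One then builds the homotopy from $\mu$ to $\lambda\circ f$ so as to preserve $\partial I^2$ setwise (the rotation construction), whence on the faces $s=0,1$ the maps $(t,\tau)\mapsto H_\tau(t,s)$ have one-dimensional image and the single $2$-form factor pulls back to zero for rank reasons, with no winding obstruction. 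To repair your proof you would need to import both steps; your telescoping and $[\partial,\End^{-2}(V)]$ bookkeeping are otherwise in line with the paper's Step 3.
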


\begin{proof}

\hspace{0cm}
{\bf Step 1:} {\em replace $I^2$ by $\Delta_2$}

Recall from \S \ref{subsection: hol for superconnections} that $\Theta: I_2 \to \Delta^2$ is the composition $\Theta = q\circ \lambda$, with $\lambda: I^2\to I^2$ some 
map defined via a one-parameter family of piecewise linear paths inside $I^2$ and $q: I^2\to \Delta_2$ the projection map.
We define $r:=b\circ q$ to be the map that first collapses the square on the $2$-simplex, and then embeds the $2$-simplex into $I^2$.
By definition $b\circ \Theta = r\circ \lambda$. Moreover, we claim that $Z(\Gamma)$ does not change
if we replace $\mu_{(m+n+1)}^*\Omega_{m,n}$ with $\mu_{(m+n+1)}^*(r^{\times m+n+1})^*\Omega_{m,n}$.
This is a consquence of the fact that the pull back of $\Omega_{m,n}$ along $r^{\times m+n+1}$ coincides with
$\Omega_{m,n}$ on $\Delta_2^{m+n+1} \subset (I^2)^{\times m+n+1}$.
And since $\Gamma$ is well-supported, only the preimage of $(\Delta_2)^{\times m+n+1}$ under $\mu_{(m+n+1)}$ contributes to the integral.

To sum up, we can rewrite $Z(\Gamma)$ and $X(\Gamma)$ as follows:
\begin{eqnarray*}
Z(\Gamma) &=& \sum_{m,n\ge 0} (-1)^{m+n}\int_{\Delta_{m+n+1} \times I} \mu_{(m+n+1)}^{*}\tilde{\Omega}_{m,n},\\
X(\Gamma) &=& \sum_{m,n\ge 0} (-1)^{m+n+1} \int_{\Delta_{m+n+1}\times I} \lambda_{(m+n+1)}^*\tilde{\Omega}_{m,n},
\end{eqnarray*}  
where $\tilde{\Omega}_{m,n} = (r^{\times m+n+1})^*\Omega_{m,n}$.
\\

{\bf Step 2:} {\em find a homotopy}

We claim that there is a homotopy $H: I^2\times I \to I^2$ relative to the boundary $\partial I^2$
between $\mu$ and $\lambda \circ f$, where $f$ is the flip with respect to the coordinate $s$, i.e.
$$ f: I^2 \to I^2, \quad (t,s) \mapsto (t,1-s).$$
Recall that $\mu$ is just given by $(t,s)\mapsto (1-t,s)$, while $\lambda(-,s)$ is the piecewise
linear path which runs from $(s,1)$ to $(s,0)$, and finally to $(0,0)$.

We first homotop $\lambda$ to $\tilde{\lambda}(t,s) = (s,1-t)$. This can be achieved by running along $\lambda(-,s)$,
but only up to some time $\tau$ (and the rescaling of the time variable $t$). Hence $\lambda \circ f$
is homotopic to $(s,1-t) \mapsto (1-s,1-t)$.

After a change of variables such that the square is centered at the origin, the problem is to find a homotopy between the identity
and $(t,s) \mapsto (-s,t)$, seen as maps from $[-1/2,1/2]\times [-1/2,1/2]$ to itself. Clearly, this can be done -- for instance, one can use the homoeomorphism between the
square and the disk, and apply a rotation of $\pi/2$ to the latter. For the sake of concreteness, and 
for the better understanding of the regularity of the obtained homotopy, we choose to work directly on the square:
\begin{itemize}
\item We define two subsets of $[-1/2,1/2]\times [-1/2,1/2]$ as follows:
\begin{eqnarray*}
A &:=&\{(x,y): x>0, |y|<|x|\} \cup \{(x,-x): x\ge 0\},\\
B &:=& \{(x,y): y>0, |y|>|x|\} \cup \{(x,x): x\ge 0\}.
\end{eqnarray*}
Observe that $A$, $B$, $-A$ and $-B$ cover the square and only intersect in $(0,0)$.
\begin{center}
\begin{tikzpicture}[scale=0.6]
\draw[ultra thick] (0,0) rectangle (6,6);
\draw[ultra thick,color=red] (3,3) -- (6,6);
\draw[fill=red, opacity=0.2] (3,3)--(6,6)--(0,6)--cycle;
\draw[ultra thick, color=green]  (3,3) -- (0,6);
\draw[fill=green, opacity=0.2] (3,3)--(0,6)--(0,0)--cycle;
\draw[ultra thick,color=yellow]  (3,3)--(0,0);
\draw[fill=yellow, opacity=0.2] (3,3)--(0,0)--(6,0)--cycle;
\draw[ultra thick, color=blue]  (3,3)--(6,0);
\draw[fill=blue, opacity=0.2] (3,3)--(6,0)--(6,6)--cycle;
\end{tikzpicture}
\end{center}
\item We define $X$ to be the vector field on the square given by
$$ X= \begin{cases} +2x\frac{\partial}{\partial y} & \textrm{on } A\cup -A\\
-2y \frac{\partial}{\partial x} & \textrm{on }B \cup -B.
\end{cases}
$$
\begin{center}
\begin{tikzpicture}[scale=0.6]
\draw[ultra thick] (0,0) rectangle (6,6);
\draw[ultra thick,color=red] (3,3) -- (6,6);
\draw[fill=red, opacity=0.2] (3,3)--(6,6)--(0,6)--cycle;
\draw[ultra thick, color=green]  (3,3) -- (0,6);
\draw[fill=green, opacity=0.2] (3,3)--(0,6)--(0,0)--cycle;
\draw[ultra thick,color=yellow]  (3,3)--(0,0);
\draw[fill=yellow, opacity=0.2] (3,3)--(0,0)--(6,0)--cycle;
\draw[ultra thick, color=blue]  (3,3)--(6,0);
\draw[fill=blue, opacity=0.2] (3,3)--(6,0)--(6,6)--cycle;
\draw[thick, ->-] (4.5,2)--(4.5,4);
\draw[thick, ->-] (5.5,1) --(5.5,5);
\draw[thick, ->-] (4,4.5)--(2,4.5);
\draw[thick, ->-] (5,5.5) --(1,5.5);
\draw[thick, ->-] (1.5,4)--(1.5,2);
\draw[thick, ->-] (0.5,5) --(0.5,1);
\draw[thick, ->-] (1,0.5)--(5,0.5);
\draw[thick, ->-] (2,1.5) --(4,1.5);
\end{tikzpicture}
\end{center}
Notice that $X$ is smooth when it is restricted to any of the subsets $A$, $B$, $-A$ or $-B$, but is not even continuous along the diagonals.

\item The homotopy $G$ between the identity and the map $(t,s)\mapsto (-s,t)$ is defined to be the time-$\tau$-flow along $X$.
This is indeed a continuous map, which is continuously differentiable on a dense and open subset of $I^2 \times I$.
 Moreover,
the derivatives of $G$ are bounded. Finally, for each fixed $\tau$, $G_\tau$ is a self-homeomorphism of the square
and it is easy to check that $G_0 =\mathrm{id}$ and $G_1(t,s)=(-s,t)$ holds.
For later, we also note that
the pull back of a differential form $\alpha$ along $G$ is well-defined on a dense and open subset
and the integral of $G^*\alpha$ is a well-defined real number.

To sum up, we proved existence of an appropriate homotopy $H$ between $\mu$ and $\lambda\circ f$
where $f$ is the flip $(t,s) \mapsto (t,1-s)$.
\end{itemize}

{\bf Step 3:} {\em apply Stokes Theorem}

The homotopy $H$ from Step 2 yields a homotopy
\begin{eqnarray*}
H_{(m+n+1)}: \Delta_{m+n+1}\times I \times I &\to & (I^{2})^{\times m+n+1}, \\
((t_1,\dots,t_{m+n+1}),s,\tau) & \mapsto & (H_\tau(t_1,s),\dots,H_\tau(t_{m+n+1},s))
\end{eqnarray*}
between $\mu_{(m+n+1)}$ and $(\lambda\circ f)_{(m+n+1)}$.
We observe that the last map equals 
$$\lambda_{(m+n+1)} \circ (\mathrm{id}\times f): \Delta_{m+n+1}\times I \to (I^2)^{\times m+n+1}.$$
As a consequence, the integral of the pull back of a differential form along $(\lambda\circ f)_{(m+n+1)}$ equals $-1$
the integral of the pull back along $\lambda_{(m+n+1)}$.

After these preparations, we are in position to prove that, modulo $[\partial,\End^{-2}(V)] \subset \End^{-1}(V)$,
the elements
$$ 
Z(\Gamma) = \sum_{m,n\ge 0} (-1)^{m+n}\int_{\Delta_{m+n+1} \times I} \mu_{(m+n+1)}^{*}\tilde{\Omega}_{m,n}
$$
and
\begin{eqnarray*}
X(\Gamma) &=& \sum_{m,n\ge 0} (-1)^{m+n+1} \int_{\Delta_{m+n+1}\times I} \lambda_{(m+n+1)}^*\tilde{\Omega}_{m,n}\\
&=& \sum_{m,n\ge 0} (-1)^{m+n} \int_{\Delta_{m+n+1}\times I} (\lambda \circ f)_{(m+n+1)}^*\tilde{\Omega}_{m,n}
\end{eqnarray*}  
are equal.
To this end we apply Stokes Theorem to the differential forms
$H_{(m+n+1)}^*(d\tilde{\Omega}_{m,n})$, i.e. we evaluate the equation
$$ \sum_{m,n\ge 0} (-1)^{m+n}\int_{\Delta_{m+n+1}\times I \times I} H^*_{(m+n+1)}(d\tilde{\Omega}_{m,n}) =  \sum_{m,n\ge 0} (-1)^{m+n} \int_{\partial(\Delta_{m+n+1}\times I \times I)} H^*_{(m+n)}\tilde{\Omega}_{m,n}.$$

Let us first look at the terms arising at the left hand side of the equation:
\begin{itemize}
\item[(D1)] Suppose $d$ is applied to a factor with $\alpha^1$. We replace this factor using the relation $d\alpha^1 + \alpha^1 \wedge \alpha^1 + [\partial, \alpha^2]=0$.
\item[(D2)] If we apply $d$ to the factor with $\alpha^2$, we get no contribution, since $d\alpha^2$ is a $3$-form, which we pull back by a map from the square.
\end{itemize}

We now look at the codimension one boundary strata of $\partial(\Delta_{m+n+1}\times I \times I)$:
\begin{itemize}
\item[($\partial$1)] There are the boundaries of the second interval, which correspond to setting the parameter $\tau$ to either $0$ or $1$. These two boundary strata just yield the difference between $Z(\Gamma)$ and $X(\Gamma)$.
\item[($\partial$2)] There are the boundaries of the first interval, which corresponds to setting the parameter $s$ to either $0$ or $1$. These two boundary strata do not contribute, since the restriction of $\Gamma$ to them has rank at most $1$, but we integrate a wedge-product containing the pull back of a $2$-form along $\Gamma$ as one of its factors.
\item[($\partial$3)] There are the codimension one boundary strata of the simplex $\Delta_{(m+n+1)}$.
These yield integrals over $\Delta_{(m+n)}\times I \times I$ of forms obtained by multiplying two consecutive factors in $\tilde{\Omega}_{m,n}$.
\end{itemize}

We claim that the contributions from the codimension one boundary strata of $\Delta_{(m+n+1)}$ ($\partial$3) cancel
exactly with the ones obtained from (D1). One can easily see that, in fact, this is the case up to signs.
For a careful discussion of signs, which can be adapted to the current setting, we refer the interested reader to \S3.1 of
\cite{AS}, and in particular to the proof of Theorem 3.10 therein.

Apart from $Z(\Gamma)-X(\Gamma)$, the only other remaining terms in Stokes Theorem come from (D1) and are given by
the sum of integrals of $\pm H_{(k+m+n+2)}^*((-1)^k U_{k,m,n}+(-1)^{k+m+1}V_{k,m,n})$ over $\Delta_{k+m+n+2}\times I \times I$, where
\begin{eqnarray*}
U_{k,m,n} &=& \underbrace{p_1^*(\Gamma\circ r)^*\alpha^1 \wedge \cdots \wedge p_k^*(\Gamma\circ r)^*\alpha^1}_{k} \wedge 
p_{k+1}^*(\Gamma\circ r)^*[\partial,\alpha^2] \wedge \cdots \\
&& \cdots \wedge \underbrace{p_{k+2}^*(\Gamma\circ r)^*\alpha^1 \wedge \cdots \wedge p_{k+m+1}^*(\Gamma\circ r)^*\alpha^1}_{m} \wedge p_{k+m+2}^*(\Gamma \circ r)^*\alpha^2 \wedge \cdots \\
&& \cdots \wedge \underbrace{p_{k+m+3}^*(\Gamma\circ r)^*\alpha^1 \wedge \cdots \wedge p_{k+m+n+2}^*(\Gamma\circ r)^*\alpha^1}_{n},\\
V_{k,m,n} &=& \underbrace{p_1^*(\Gamma\circ r)^*\alpha^1 \wedge \cdots \wedge p_k^*(\Gamma\circ r)^*\alpha^1}_{k} \wedge 
p_{k+1}^*(\Gamma\circ r)^*\alpha^2 \wedge \cdots \\
&& \cdots \wedge \underbrace{p_{k+2}^*(\Gamma\circ r)^*\alpha^1 \wedge \cdots \wedge p_{k+m+1}^*(\Gamma\circ r)^*\alpha^1}_{m} \wedge p_{k+m+2}^*(\Gamma \circ r)^*[\partial,\alpha^2] \wedge \cdots \\
&& \cdots \wedge \underbrace{p_{k+m+3}^*(\Gamma\circ r)^*\alpha^1 \wedge \cdots \wedge p_{k+m+n+2}^*(\Gamma\circ r)^*\alpha^1}_{n}.
 \end{eqnarray*}
We now see that the difference $Z(\Gamma)-X(\Gamma)$ can be written as the sum of integrals over $\Delta_{k+m+n+2}\times I \times I$ of the
forms $[\partial,W_{k,m,n}]$ given by
\begin{eqnarray*}
W_{k,m,n} &=& \underbrace{p_1^*(\Gamma\circ r)^*\alpha^1 \wedge \cdots \wedge p_k^*(\Gamma\circ r)^*\alpha^1}_{k} \wedge 
p_{k+1}^*(\Gamma\circ r)^*\alpha^2 \wedge \cdots \\
&& \cdots \wedge \underbrace{p_{k+2}^*(\Gamma\circ r)^*\alpha^1 \wedge \cdots \wedge p_{k+m+1}^*(\Gamma\circ r)^*\alpha^1}_{m} \wedge p_{k+m+2}^*(\Gamma \circ r)^*\alpha^2 \wedge \cdots \\
&& \cdots \wedge \underbrace{p_{k+m+3}^*(\Gamma\circ r)^*\alpha^1 \wedge \cdots \wedge p_{k+m+n+2}^*(\Gamma\circ r)^*\alpha^1}_{n}.
\end{eqnarray*}

Since, up to sign, $[\partial,-]$ and integration commute, we conclude that $Z(\Gamma)$ differs from $X(\Gamma)$ by
a term in $[\partial,\End^{-2}(V)]$.
\end{proof}

\thebibliography{10}

\bibitem{AS}
C. Arias Abad, F. Sch\"atz,
{\em The $\mathsf{A}_\infty$ de Rham theorem and the integration of representations up to homotopy}, Int. Math. Res. Not. 2013 (16): 3790-3850.
\bibitem{AC}
C. Arias Abad, M. Crainic,
{\em Representations up to homotopy of Lie algebroids}, Journal f\"ur die reine und angewandte Mathematik, 663 (2012) 91-126. 

\bibitem{BaezC}
J. Baez, A. Crans, {\em Higher-dimensional algebra VI Lie 2-algebras},  Theory Appl. Categ. {\bf 12} (2004), 492--538.

\bibitem{BaezS}
J. Baez, U. Schreiber, {\em Higher Gauge Theory: 2-connections on 2-bundles}, arxiv:hepth/0412325.

\bibitem{BS}
J. Block and A. Smith, {\em The higher Riemann-Hilbert correspondence}, Adv. in Math, Volume {\bf 252} (2014), 382--405.

\bibitem{Chen}
K.T. Chen,
{\em Iterated path integrals}, Bull. Amer. Math. Soc. {\bf 83} (1977), 831--879.

\bibitem{FariaMikovic}
J. Faria Martins, V. Mikovic,
{\em Lie crossed modues and gauge invariant actions for 2-BF theories}, arxiv: 1006.0903v3 [hep-th].

\bibitem{FariaPicken2}
J. Faria Martins, R. Picken,
{\em On two-dimensional holonomy}, Trans. Amer. Math. Soc., {\bf 362}, (2010), 5657--5695.

\bibitem{FariaPicken}
J. Faria Martins, R. Picken,
{\em The fundamental Gray 3-groupoid of a smooth manifold and local 3-dimensional holonomy based on a 2-crossed module}, Differential Geom. Appl., {\bf 29} (2) (2011), 179--206.

\bibitem{Picken}
J. Faria Martins, R. Picken,
{\em Surface Holonomy for Non-Abelian 2-Bundles via Double Groupoids}, Adv. in Math, Volume {\bf 226}, Issue 4: 3309--3366 (2011).

\bibitem{FariaCirio}
J. Faria Martins, Lucio Simone Cirio,
{\em Categorifying the $sl(2,\mathbb{C})$ Knizhnik-Zamolodchikov Connection via an Infinitesimal $2$-Yang-Baxter Operator in the String Lie-$2$-Algebra}, arxiv.org/abs/1207.1132.
\bibitem{G}
V. K. A. M. Gugenheim,
{\em On Chen's iterated integrals},
Illinois J. Math. Volume {\bf 21}, Issue 3 (1977), 703--715.

\bibitem{I}
K. Igusa,
{\em Iterated integrals of superconnections}, arXiv:0912.0249.

\bibitem{KP}
K. H. Kamps, T. Porter,
{\em $2$-groupoid enrichments in homotopy theory and algebra}, K-theory 25 (2002), 373-409.

\bibitem{SchreiberWaldorf}
U. Schreiber, K. Waldorf, {\em Smooth Functors vs. Differential Forms}, Homology Homotopy Appl. {\bf 13}
(2011), no. 1, 143--203.

\end{document}